\def\a{\alpha}
\def\({\left (}
\def\){\right )}
\def\<{\left\langle}
\def\>{\right\rangle}
 \newtheorem{thm}{Theorem}[section]
\newtheorem{lem}[thm]{Lemma}
\newtheorem{prop}[thm]{Proposition}
\newtheorem{rem}[thm]{Remark}
\newtheorem{acknowledgement}{Acknowledgement}
\newcommand{\norm}[1]{\left\Vert#1\right\Vert}
\newcommand{\abs}[1]{\left\vert#1\right\vert}
\newcommand{\set}[1]{\left\{#1\right\}}
\newcommand{\Real}{\mathbb R}
\newcommand{\R}{\mathbb R}
\newcommand{\pfrac}[2]{\frac{\partial #1}{\partial #2}}
\begin{document}
\title{The  Yang-Mills  $\alpha$-flow  in vector bundles over four manifolds and its applications}

\numberwithin{equation}{section}
\author{Min-Chun Hong, Gang Tian and Hao Yin}

\address{Min-Chun Hong, Department of Mathematics, The University of Queensland\\
Brisbane, QLD 4072, Australia}  \email{hong@maths.uq.edu.au}

\address{Gang Tian, Department of Mathematics, Princeton University\\
USA}  \email{}

\address{Hao Yin,  School of Mathematical Sciences,
University of Science and Technology of China, Hefei, China}

\begin{abstract}
In this paper we introduce an $\alpha$-flow for the Yang-Mills
functional in vector bundles over four dimensional Riemannian
manifolds, and establish global existence of a unique smooth solution to
the $\alpha$-flow with smooth initial value.  We prove that the
limit of the solutions of the $\alpha$-flow  as $\alpha\to 1$ is a
weak solution to the Yang-Mills flow. By an application of the
$\alpha$-flow, we then follow the idea of Sacks and Uhlenbeck  \cite{SU} to prove some existence results for Yang-Mills connections and  improve the minimizing result of the Yang-Mills functional of   Sedlacek \cite{Se}.
\end{abstract}
\subjclass{AMS 58E15} \keywords{Yang-Mills flow, Sacks-Uhlenbeck
functional}

 \maketitle

 \pagestyle{myheadings} \markright {The Yang-Mills $\alpha$-flow}

\section{Introduction}
\label{sec:intro}

Suppose that $M$ is a connected compact four dimensional Riemannian manifold
and   $E$ is  a
 vector bundle over $M$.
 For each connection $D_A$, the Yang-Mills
 functional is defined by
 \begin{equation*}\mbox{YM} (A; M) =  \int_M |F_A|^2 \,dv, \end{equation*}
where  $F_A$ is the curvature of $D_A$. In a local trivialization,
we can express $D_A$ as $d+A$, where $A\in \Gamma (\text{End}
E\otimes T^*M)$ is the connection matrix.

We say that a connection $D_A$    is a {\it Yang-Mills connection}
if it is a critical point of the Yang-Mills functional; i.e. $D_A$
satisfies the Yang-Mills equation
 \begin{equation}\label{1.1}D_A^*F_A =0\,.  \end{equation}

    Yang-Mills equations originated from the theory of classical
fields in particle physics.  It turns out that  Yang-Mills theory has substantial applications in pure mathematics, especially in dimension 4.
In \cite{AHDM},  Atiyah,  Hitchin,
Drinfel'd   and   Manin established the fundamental existence
result of instantons on $S^4$.   Uhlenbeck \cite {remove}-\cite {U2}   established
 important analytic theorems for Yang-Mills
connections on $4$-manifolds. Donaldson  \cite{Do1}
 successfully applied the Yang-Mills theory to four dimensional geometric
topology.
\medskip

  The Yang-Mills equation is a typical example of partial differential equations involving gauge invariant of a group action. Besides its applications to geometry and topology, the  study of the existence of Yang-Mills connections is very interesting in itself.   Motivated by the seminal work of Eells-Sampson \cite {ES} on
harmonic maps, Atiyah and Bott \cite {AB} suggested to use
 the method of the Yang-Mills flow   to  establish the existence of
Yang-Mills connections.  The Yang-Mills flow equation is
 \begin{equation}\label{1.2} \pfrac{D_A}{t}=-D_A^*F_A,  \end{equation}
 with initial condition $D_A(0)=D_0$, where $D_0$ is a given smooth
 connection on $E$.    In
\cite {Do2},  Donaldson  used the Yang-Mills  flow to
establish   the important  result that an irreducible holomorphic
vector bundle $E$ over a compact K\"ahler surface $X$ admits a
unique Hermitian-Einstein connection if and only if it is stable.
Without the holomorphic structure of the bundle $E$,   it is still open
whether the Yang-Mills flow in four dimensional manifolds develop
a singularity in finite time.  Struwe \cite {St3} proved the
existence of  the weak solution to the Yang-Mills flow in vector
bundles on four manifolds, where
the weak solution is regular away from finitely many singularities in
$M\times (0,\infty)$. If
the Yang-Mills flow blows up at a finite
time $T>0$, the weak solution constructed by Struwe \cite {St3}
after the time $T$ lies on the new vector bundle $\tilde E$, which
might have  different second Chern number from the original
bundle $E$.

\medskip

 The Yang-Mills functional in dimension four is conformally invariant, which is similar to the conformal invariance of the Dirichlet energy of maps in dimension two, so there are general expectations that those results, which hold for harmonic maps from surfaces, should remain true in some sense for Yang-Mills connections in dimension four, if the gauge invariance problem is treated properly.
 In their celebrated paper \cite{SU}, Sacks and Uhlenbeck proposed to study the perturbed energy of a map $u$ from $M$ to $N$
\begin{equation*}
  E_\alpha(u)=\int_M (1+\abs{du}^2)^\alpha dv.
\end{equation*}
For $\alpha>1$, the functional $ E_\alpha(u)$ satisfies the Palais-Smale condition and therefore it is not difficult to find critical points of $E_\alpha$. They then analyzed the limit of the critical points when $\alpha$ goes to $1$. In spite of the possible blow-up phenomena, several interesting applications concerning the existence of harmonic maps were made. One of the major goals of this paper is to develop a parallel theory for the Yang-Mills functional in dimension four. Namely, we introduce the Yang-Mills $\alpha$-functional
\begin{equation*}
  YM_\alpha(A)=\int_M (1+\abs{F_A}^2)^\alpha dv.
\end{equation*}
The Euler-Lagrange equation for the functional $YM_\alpha$ is
\begin{equation}\label{eqn:alpha}
  D_A^*\left( (1+\abs{F_A}^2)^{\alpha-1}F_A \right)=0.
\end{equation}
A solution to  the Yang-Mills $\alpha$-equation (1.3) is called a
Yang-Mills $\alpha$-connection. In order to show the existence of
smooth $\alpha$-connections, one maybe check the Palais-Smale
condition for $YM_\alpha$ and then prove the regularity of the
weak solution of (\ref{eqn:alpha}). Instead, in this paper we
introduce the Yang-Mills $\alpha$-flow
\begin{equation}\label{eqn:alphaflow}
  \pfrac{A}{t}=-D_A^* F_A+(\alpha-1)\frac{*(d\abs{F_A}^2\wedge *F_A)}{1+\abs{F_A}^2}
\end{equation}
with initial condition $A(0)=A_0$.
Then we apply the Yang-Mills $\alpha$-flow to deform any given connection to a smooth Yang-Mills $\alpha$-connection.
More precisely, we prove
\begin{thm}\label{thm:one}
For a given smooth connection $A_0$, there exists a unique global
smooth solution $A_{\alpha }(x,t)$ to the evolution problem
(\ref{eqn:alphaflow}) in $M\times [0,\infty )$ for $\alpha-1$
sufficiently small. Moreover, for any $t_i\to \infty$, by passing to
a subsequence, $A_{\alpha}(\cdot,t_i)$ converges  up to
transformations
  to a limiting connection
$A^{\infty}_\alpha$ in $C^k(M)$ for any $k\geq 1$, and the
connection $A^{\infty}_\alpha$ is a smooth solution of (\ref{eqn:alpha}).
\end{thm}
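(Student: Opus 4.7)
Observe first that the flow equation (\ref{eqn:alphaflow}) is equivalent to
\begin{equation*}
(1+\abs{F_A}^2)^{\alpha-1}\pfrac{A}{t}=-D_A^*\!\left((1+\abs{F_A}^2)^{\alpha-1}F_A\right),
\end{equation*}
so it is, up to the positive conformal factor $(1+\abs{F_A}^2)^{\alpha-1}$, the $L^2$ gradient flow of $YM_\alpha$. Short-time existence will be obtained via DeTurck's trick: adding the gauge-fixing term $D_A d_{A_0}^*(A-A_0)$ changes the principal symbol of $-D_A^*F_A$ to $\abs{\xi}^2$. The extra $\alpha$-term is also second order in $A$, but its principal symbol is of size $O((\alpha-1)\abs{F_A}^2)$, hence only a small perturbation of the leading symbol when $\alpha-1$ is small relative to $\norm{F_{A_0}}_\infty$. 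The modified system is therefore strictly parabolic and quasilinear, and standard theory produces a unique smooth short-time solution; pulling back by a time-dependent gauge transformation yields a solution of (\ref{eqn:alphaflow}).

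Next I would derive the basic energy identity
\begin{equation*}
\frac{d}{dt}YM_\alpha(A(t))=-2\alpha\int_M (1+\abs{F_A}^2)^{\alpha-1}\abs{\pfrac{A}{t}}^2\,dv\;\le\;0,
\end{equation*}
which gives a uniform bound $YM_\alpha(A(t))\le YM_\alpha(A_0)$, and in particular a uniform bound on $\norm{F_A(\cdot,t)}_{L^{2\alpha}(M)}$, together with the space-time integrability $\int_0^\infty\!\int_M (1+\abs{F_A}^2)^{\alpha-1}\abs{\partial_t A}^2\,dv\,dt<\infty$. Since $\alpha>1$, the $L^{2\alpha}$ bound is strictly stronger than the conformally critical $L^2$ bound on $F_A$ in dimension four; this extra integrability is the analogue of the Sacks-Uhlenbeck subcriticality and drives all subsequent estimates.

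The principal difficulty, and the main work of the proof, lies in long-time existence: unlike the Yang-Mills flow itself, the $\alpha$-flow must be shown not to develop finite-time singularities. The strategy is to establish an $\varepsilon$-regularity theorem for the $\alpha$-flow, modelled on Uhlenbeck's regularity theorem and its parabolic versions: smallness of $\norm{F_A}_{L^{2\alpha}}$ on a parabolic ball should imply uniform $C^k$ bounds on a smaller ball. Concretely, one derives a Bochner-type differential inequality for $\abs{F_A}^2$ along the flow, carefully tracking the extra terms coming from the $\alpha$-perturbation and showing that they are lower order and can be absorbed provided $\alpha-1$ is small. Combined with the subcritical $L^{2\alpha}$ bound and a parabolic Moser iteration in the spirit of Struwe~\cite{St3}, this rules out concentration of $F_A$, yields uniform $C^k$ estimates on every time slice, and allows the short-time solution to be continued to $M\times[0,\infty)$. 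Uniqueness then follows from a routine energy estimate for quasilinear parabolic systems after fixing a gauge.

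For the asymptotic statement, the space-time energy bound forces the existence of a sequence $t_i\to\infty$ with $\norm{\partial_t A(\cdot,t_i)}_{L^2}\to 0$, so that $A_\alpha(\cdot,t_i)$ is almost a critical point of $YM_\alpha$. Combining the uniform $C^k$ bounds with Uhlenbeck's Coulomb gauge compactness, one extracts a subsequence which, after suitable gauge transformations, converges in $C^k$ for every $k\ge 1$ to a smooth limit $A_\alpha^\infty$ solving the Yang-Mills $\alpha$-equation (\ref{eqn:alpha}).
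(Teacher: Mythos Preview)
Your outline is sound and would produce a correct proof, but the route you take for global existence differs materially from the paper's.

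For long-time existence you propose to derive a Bochner inequality for $\abs{F_A}^2$, combine it with parabolic Moser iteration and the subcritical $L^{2\alpha}$ curvature bound to rule out concentration, and conclude uniform $C^k$ bounds on $F$. This works: the Bochner formula (the paper's Lemma~3.2) and the time-uniform $\varepsilon$-regularity (the paper's Theorem~4.2) are exactly the ingredients you describe, and the H\"older estimate $\int_{B_r}\abs{F}^2\le C\,r^{4(1-1/\alpha)}$ coming from the $L^{2\alpha}$ bound does feed the $\varepsilon$-regularity on small balls. The paper, however, develops this machinery only later, for the $\alpha\to 1$ analysis. For Theorem~1.1 itself the authors instead stay with the DeTurck-modified flow and do energy estimates directly on the connection form $a(t)=\bar D(t)-D_{ref}$: Uhlenbeck's gauge-fixing (Lemma~2.4) gives a $W^{1,2\alpha}$ bound on $a(0)$ depending only on $YM_\alpha(D_0)$, and then elementary $L^2/H^1$ estimates on (2.11) show the modified flow persists for a fixed time $t_0=t_0(YM_\alpha(D_0))$, which combined with energy monotonicity yields global existence. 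Their approach avoids the Bochner/Moser apparatus entirely at this stage and gives the sharper statement that the existence time depends only on $YM_\alpha$; your approach is closer in spirit to Struwe's treatment of the Yang-Mills flow and has the advantage that the same estimates later serve the $\alpha\to 1$ limit.

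One small gap in your convergence argument: from $\int_0^\infty\!\int_M(1+\abs{F}^2)^{\alpha-1}\abs{\partial_t A}^2\,dv\,dt<\infty$ you only extract \emph{some} sequence $t_i\to\infty$ with $\norm{\partial_t A(\cdot,t_i)}_{L^2}\to 0$, whereas the theorem asserts convergence along a subsequence of \emph{any} given $t_i\to\infty$. To get this you need $\int_M\abs{\partial_t A}^2\,dv\to 0$ as $t\to\infty$ along all of $[0,\infty)$; the paper obtains this by using the uniform $C^k$ bounds on $F$ (and hence on $\partial_t A$ via the flow equation) to show $t\mapsto\int_M(1+\abs{F}^2)^{\alpha-1}\abs{\partial_t A}^2\,dv$ has bounded derivative, so integrability forces it to zero.
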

To prove the global existence of the smooth solution of the
Yang-Mills $\alpha$-flow is not   easy   since the Yang-Mills
$\alpha$-flow is not parabolic. For the local existence of the
flow, we modify an idea of Donaldson \cite {Do2} to study
a equivalent flow. The main difficulty in
proving the global existence is how to extend the local solution to
 any time $T>0$. Due to the energy inequality, the Yang-Mills energy of the solution to the
 $\alpha$-flow does not concentrate at any time $T>0$ for each fixed $\alpha
>1$. However, we cannot  follow  the same proof of Struwe in
\cite{St2} to control the norm $H^{2}$ of the curvature $F$ since
the extra terms $\int_M|\nabla F|^4\,dv$ and $\int_M |F|^4\,dv$
come out due to the complexity of the $\alpha$-flow. Instead, we
  work on the  gauge-equivalent flow and prove that for any $t>0$, the Yang-Mills $\alpha$-flow has a smooth  solution in $M\times [t, t+t_0]$ for a fixed $t_0>0$, which depends on $YM_{\alpha}(A_{\alpha})$, so that we can extend the smooth solution to $M\times [0,\infty )$ (see Theorem 2.3).

\medskip

Following an idea from \cite{HY}, we apply  the Yang-Mills $\alpha$-flow to obtain a new proof of the existence  of a weak solution  of the Yang-Mills flow, which might be a different global weak solution from the one obtained by Struwe in \cite {St3}, as in the following.
\begin{thm}
    \label{thm:two} Let  $A_\alpha$ be the smooth solution of the  Yang-Mills
    $\alpha$-flow with the same initial condition $A_0$ for each $\alpha>1$. Then, there is a closed singularity set $\Sigma\subset M\times [0,\infty)$ with finite $2$-dimensional parabolic Hausdorff measure such that $\Sigma_t=\Sigma\cap (M\times \{t\})$ is at most a finite set for any $t$. There is a smooth bundle $\tilde{E}$ over $M\times [0,\infty)\setminus \Sigma$ with $\tilde{E}|_{M\times \{0\}}$ isomorphic to $E$ and a smooth connection $A_\infty(t)$ on $\tilde{E}|_{M\times \{t\}\setminus \Sigma_t}$ such that (1) $A_\infty(t)$ is a solution of the Yang-Mills flow; (2) for each compact set $K\subset M\times [0,\infty)\setminus \Sigma$, there are gauge transformations $\phi_\alpha$ over $K$ with $\phi_\alpha^*A_{\a}$ converging smoothly to $A_{\infty}$ over $K$ as $\a\to 1$.
\end{thm}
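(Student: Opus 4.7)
The plan is to pass to the limit $\alpha \to 1^+$ in the smooth family $\{A_\alpha\}$ produced by Theorem \ref{thm:one}, following the framework Struwe developed in \cite{St3} for the Yang-Mills flow itself and adapted to the $\alpha$-flow setting along the lines of \cite{HY}. The energy inequality for (\ref{eqn:alphaflow}) gives $YM_\alpha(A_\alpha(t)) \le YM_\alpha(A_0)$ for all $t\ge 0$, and the elementary estimate $(1+\abs{F}^2)^\alpha \ge 1 + \alpha \abs{F}^2$ together with $YM_\alpha(A_0) \to \operatorname{vol}(M)+\int_M\abs{F_{A_0}}^2\,dv$ as $\alpha\to 1^+$ yields a uniform bound
\begin{equation*}
\sup_{\alpha\in(1,\alpha_0),\, t\ge 0} \int_M \abs{F_{A_\alpha}(\cdot,t)}^2\, dv \le C_0,
\end{equation*}
with $C_0$ independent of $\alpha$ near $1$.

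The key technical input is an $\eps$-regularity theorem, uniform in $\alpha$: there exist $\eps_0, r_0>0$ and $\alpha_1\in(1,\alpha_0]$ such that if $\alpha\in(1,\alpha_1)$, $(x_0,t_0)\in M\times(0,\infty)$, $r\le \min(r_0,\sqrt{t_0})$, and
\begin{equation*}
\int_{t_0-r^2}^{t_0}\int_{B_r(x_0)} \abs{F_{A_\alpha}}^2\, dv\, dt < \eps_0,
\end{equation*}
then $\abs{F_{A_\alpha}} \le C r^{-2}$ on $B_{r/2}(x_0)\times[t_0-r^2/4,t_0]$, with $C$ depending only on the geometry. The derivation imitates Struwe's Bochner/Moser iteration for the Yang-Mills flow, treating the extra term $(\alpha-1)\frac{*(d\abs{F}^2\wedge *F)}{1+\abs{F}^2}$ in (\ref{eqn:alphaflow}) as a higher-order perturbation whose coefficient vanishes as $\alpha\to 1$. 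Combined with Uhlenbeck's Coulomb gauge theorem \cite{U2}, this yields uniform $C^k$ bounds on $A_\alpha$ in local gauges whenever the local energy is small.

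Now define the singular set
\begin{equation*}
\Sigma = \set{(x_0,t_0) : \liminf_{r\to 0^+}\liminf_{\alpha\to 1^+} r^{-2}\int_{t_0-r^2}^{t_0}\int_{B_r(x_0)} \abs{F_{A_\alpha}}^2\, dv\, dt \ge \eps_0/2}.
\end{equation*}
A Vitali covering argument combined with the uniform energy bound shows that $\Sigma$ has finite $2$-dimensional parabolic Hausdorff measure, and an energy-concentration argument on fixed time slices shows $\Sigma_t$ is finite for every $t$. Off $\Sigma$, the $\eps$-regularity delivers uniform $C^k$ estimates for $A_\alpha$ in local Coulomb gauges, so a diagonal subsequence converges modulo gauge on every compact subset of $M\times[0,\infty)\setminus\Sigma$. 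Uhlenbeck's patching construction assembles these local limits into a smooth bundle $\tilde E$ and a smooth connection $A_\infty$, with gauge transformations $\phi_\alpha$ satisfying $\phi_\alpha^*A_\alpha \to A_\infty$ smoothly on compacta. Since every $A_\alpha$ lives on $E$ at $t=0$, one has $\tilde E|_{M\times\{0\}}\cong E$, and passing to the limit in (\ref{eqn:alphaflow}) gives (\ref{1.2}) for $A_\infty$, as the $(\alpha-1)$ term is killed by the uniform $C^1$ bounds on $F$.

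The main obstacle is the uniform $\eps$-regularity, together with an accompanying monotonicity-type formula whose constants remain bounded as $\alpha \to 1^+$. The $\alpha$-flow is only degenerate-parabolic, and the Bochner inequality for $\abs{F}^2$ picks up extra terms coupling $d\abs{F}^2$ with $\nabla F$ through the $(\alpha-1)$-correction; controlling these mixed terms by the Yang-Mills energy alone, and verifying that all constants in the iteration and covering arguments stay $\alpha$-stable, is the technical heart of the proof.
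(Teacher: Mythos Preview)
Your overall architecture---uniform energy bound, $\varepsilon$-regularity uniform in $\alpha$, definition of the concentration set $\Sigma$, covering argument for the parabolic Hausdorff measure, Uhlenbeck patching, and passage to the limit---matches the paper's route. The paper's $\varepsilon$-regularity (Lemma~\ref{lem:eregularity}) is proved via Schoen's point-picking argument together with a local parabolic monotonicity formula (Lemma~\ref{Lemma 3.2}) and the Bochner inequality of Lemma~\ref{lem:secondbochner}; you gesture at the monotonicity formula only in your final paragraph, but you clearly see it is needed, so the high-level strategies agree.

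There is, however, a genuine gap in your passage from the sup-bound on $\abs{F_{A_\alpha}}$ to uniform $C^k$ bounds. You write that ``combined with Uhlenbeck's Coulomb gauge theorem, this yields uniform $C^k$ bounds on $A_\alpha$ in local gauges,'' but the paper singles out exactly this step as a technical obstacle: for the $\alpha$-flow one does \emph{not} have usable Bochner formulas for $\nabla^k F$, so Moser iteration cannot be run on the higher derivatives. The paper instead develops Lemma~\ref{lem:highorder}, which obtains $\sup\abs{\nabla^k F}$ by an inductive $L^2$ energy argument directly on the curvature equation (\ref{eqn:forF}), controlling the $(\alpha-1)$-perturbation by repeatedly integrating by parts and absorbing terms of the form $\int \varphi\abs{\nabla^i F}^{2(k+1)/i}$ into one another. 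Only after these gauge-invariant bounds are in hand does the paper produce a good trivialization (Lemma~\ref{lem:gauge}), by fixing an Uhlenbeck gauge at a single time and propagating it via the flow equation for $A$. Invoking Uhlenbeck's elliptic gauge-fixing alone does not give you a gauge varying smoothly in $t$ with uniform bounds, and bootstrapping the parabolic system for $A$ in Coulomb gauge without first controlling $\nabla^k F$ runs into the same $(\alpha-1)\nabla^2 F$ terms you flagged elsewhere. You should make this step explicit.

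A minor inconsistency: your $\varepsilon$-regularity hypothesis is stated without the scale factor, while your definition of $\Sigma$ carries the $r^{-2}$ weight. The paper's hypothesis is the scale-invariant one, $R^{4\alpha-6}\int_{P_R}(1+\abs{F}^2)^\alpha\le\varepsilon_0$, which reduces to $R^{-2}\int_{P_R}\abs{F}^2$ as $\alpha\to 1$; you should align the two.
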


To prove Theorem \ref{thm:two}, we
establish a Bochner type estimate uniformly in $\alpha$ and a
local parabolic monotonicity formula for the Yang-Mills $\alpha$-flow,
which is similar to one in  \cite {St2} and  \cite {HT1}. Then we follow an idea of Schoen \cite
{Sch} (also see \cite {St2})  to obtain a uniform estimate on
$|F_{A_\alpha}|$ in $\alpha$. However,  there is a technical difficulty  that we do not have Bochner formulas for higher order derivatives of $F_{A_\alpha}$, so we cannot apply the Moser estimate to  obtain the unform estimates of higher order derivatives of $F_{A_\alpha}$.  To overcome this difficulty,  we obtain the uniform Sobolev norms of $\nabla^k_{A_{\alpha}}F_{A_{\alpha}}$ for all integers $k\geq 1$ by using the equation of $F_{A_{\a}}$ (see Lemma 3.6).

\medskip

With the analytic tools developed in the proof of the previous two theorems, we investigate further applications of the $\alpha$-flow. It is not hard to establish an $\varepsilon$-regularity result for studying the blow-up of a sequence of Yang-Mills  $\alpha$-connections.  When a blow-up phenomenon happens, we will study the change of the topology of the bundle. More precisely, the original bundle $E$, on which the blow-up sequence lies, is the connected sum of the weak limit bundle over $M$ and the bubbling bundles over $S^4$.  Following the idea  of Sacks and Uhlenbeck's paper \cite{SU},  we  apply  the existence of smooth  Yang-Mills  $\alpha$-connections of Theorem \ref{thm:one} to show

\begin{thm}
    \label{thm:four}
    If $\pi_3(G)$ is a free abelian group of rank $r$, then there exist  at least $r$ different Yang-Mills $G$-connections over $S^4$.
\end{thm}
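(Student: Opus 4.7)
The plan is to carry the Sacks--Uhlenbeck strategy over to the Yang--Mills setting. Recall that principal $G$-bundles over $S^4$ are classified by $\pi_3(G)$, so a $\Z$-basis $e_1,\ldots,e_r$ of $\pi_3(G)=\Z^r$ determines $r$ bundles $E_1,\ldots,E_r$ over $S^4$. On each $E_k$ the first step is to produce a smooth Yang--Mills $\alpha$-connection $A_\alpha^k$ that minimizes $YM_\alpha$ among all smooth connections on $E_k$. For $\alpha>1$ the functional $YM_\alpha$ is coercive and satisfies the Palais--Smale condition, so a minimizer exists, and its smoothness follows from standard elliptic regularity for \eqref{eqn:alpha}; alternatively, one runs the $\alpha$-flow of Theorem~\ref{thm:one} from an infimizing sequence of smooth initial data on $E_k$ and extracts a subsequential limit, using that $YM_\alpha$ decreases along the flow and that the bundle $E_k$ is preserved along the flow.

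The second step is to pass to the limit $\alpha\to 1$. Combining the $\varepsilon$-regularity estimates uniform in $\alpha$ with the bubble-tree picture described just before Theorem~\ref{thm:four}, which mirrors the analysis in the proof of Theorem~\ref{thm:two}, the sequence $A_\alpha^k$ converges smoothly away from a finite blow-up set, modulo gauge, to a smooth Yang--Mills connection $B_k$ on a bundle $\widetilde E_k$ over $S^4$. At each blow-up point one rescales to extract a bubble $\omega^k_j$, which by Uhlenbeck's removable singularity theorem extends to a smooth Yang--Mills $G$-connection on $S^4$ living on some bundle $F^k_j$. The connected-sum description of the topology then yields the identity
\begin{equation*}
    e_k \;=\; [\widetilde E_k] \;+\; \sum_j [F^k_j] \quad \text{in } \pi_3(G).
\end{equation*}

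To conclude, let $\mathcal{S}$ be the collection of Yang--Mills connections $B_k$ and $\omega^k_j$ produced for $k=1,\ldots,r$. Two Yang--Mills connections living on non-isomorphic bundles are automatically distinct, and the identity above implies that the topological classes arising in $\mathcal{S}$ generate a sublattice of $\pi_3(G)$ containing the basis $\{e_1,\ldots,e_r\}$, hence all of $\pi_3(G)$. Since a free abelian group of rank $r$ cannot be generated by fewer than $r$ elements, the set of distinct nonzero classes realized in $\mathcal{S}$ has cardinality at least $r$, producing at least $r$ pairwise distinct Yang--Mills $G$-connections on $S^4$.

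The main obstacle I expect is the bubbling analysis as $\alpha\to 1$: one must establish a no-neck energy identity so that all the curvature of $A_\alpha^k$ concentrates into the weak limit plus finitely many bubbles, and one must match bundle data across each neck region in order to secure the connected-sum identity in $\pi_3(G)$. Both rely on the uniform $\varepsilon$-regularity, the local parabolic monotonicity formula, and the higher-order Sobolev estimates uniform in $\alpha$ developed for the proof of Theorem~\ref{thm:two}. A secondary technical point, the existence of a smooth $YM_\alpha$-minimizer on each $E_k$, is handled by Palais--Smale for $\alpha>1$ together with the regularity theory already underlying Theorem~\ref{thm:one}.
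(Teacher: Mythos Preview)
Your proof is correct and follows essentially the same route as the paper: produce Yang--Mills $\alpha$-connections on prescribed bundles, pass to the limit $\alpha\to 1$, and use the connected-sum decomposition of the bundle to see that the classes in $\pi_3(G)$ carrying Yang--Mills connections must generate $\pi_3(G)\cong\Z^r$, hence number at least $r$. The paper packages this by contradiction (assume at most $r-1$ such classes, pick $a$ outside their span, run the $\alpha$-flow on the corresponding bundle, and contradict either outcome), uses an arbitrary $\alpha$-connection from Theorem~\ref{thm:one} rather than a minimizer, and does not invoke a no-neck energy identity---only the topological connected-sum statement developed in Subsection~\ref{ssec:topology} is needed.
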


\begin{rem}
    It is well known that any simple compact Lie group $G$ has $\pi_3(G)=\mathbb Z$. So the result is useful only for semi-simple compact Lie groups, for example $SO(4)$.
\end{rem}

Furthermore, we can apply the Yang-Mills $\alpha$-flow  to improve the minimizing theory of the Yang-Mills functional on $E$. In \cite{Se}, Sedlacek studied the direct minimizing method for the Yang-Mills functional in $E$.      More precisely, let $D_i$ be a minimizing sequence in the
given bundle $E$ over $M$. Using the weak compactness result of Uhlenbeck \cite
{U2}, Sedlacek  proved that $D_{i}$ weakly converges in
$W^{1,2}(M\backslash \{x_1,...x_l\})$ to a limiting connection $D_{\infty}$ which can be extended to  a Yang-Mills connection  in
a (possibly) new bundle $E'$  over $M$ with the same topological  invariant $\eta (E')=\eta (E)$, which is an element of $H^2(M,\pi_1(G))$.
Because there is only $W^{2,2}$ control of the transition functions, one can not use the gluing argument of Uhlenbeck in \cite{U2} to obtain a bundle map. Therefore, the relation between the original bundle and the limit bundle $E'$ (which may be different) is not quite clear. It is known that the topology of a vector bundle over a $4$-manifold is determined by some $\eta$ invariant, and the vector Pontryagin number (see the appendix in \cite{Se}).
By using the $\alpha$-flow, we modify the minimizing sequence to obtain a better control and  new minimizing sequence, which converges to the same limit in the smooth topology up to gauge transformation away from finite singular points. Moreover, for the modified minimizing sequence, a blow-up analysis is discussed and an energy identity is proved.

\begin{thm}\label{thm:three}
    Let  $E$ be a vector bundle over $M$ with structure group $G$. Assume that $D_i$ is a minimizing sequence of the Yang-Mills functional $YM$ among smooth connections on $E$, which converges weakly to some limit connection $D_{\infty}$ by Sedlacek's result. There is a modified minimizing sequence ${D'_i}$, a finite set $S\subset M$ and a sequence of gauge transformations $\phi_i$ defined on $M\setminus S$, such that for any compact $K\subset M\setminus S$, $\phi_i^*{D'}_i$ converges to $D'_{\infty}$ smoothly in $K$, where $D'_{\infty}$ is gauge equivalent to the connection $D_{\infty}$.
Moreover, there are a finite number of bubble bundles $E_1,\cdots,E_l$ over $S^4$ and Yang-Mills connections $\tilde{D_1},\cdots,\tilde{D}_l$ such that
    \begin{equation*}
        \lim_{i\to\infty} YM(D_i)=YM(D_{\infty})+\sum_{j=1}^l YM(\tilde{D}_j).
    \end{equation*}
\end{thm}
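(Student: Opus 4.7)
The plan is to adapt the Sacks--Uhlenbeck philosophy to connections: replace each $D_i$ by a nearby critical point of the perturbed functional $YM_{\alpha_i}$ and then pass to the limit $\alpha_i\to 1$. Concretely, choose a sequence $\alpha_i\to 1^+$ so slowly that $YM_{\alpha_i}(D_i)-\mathrm{vol}(M)-YM(D_i)\to 0$ (possible by Bernoulli's inequality $(1+x)^{\alpha}\geq 1+\alpha x$ and dominated convergence, applied for each fixed $D_i$), and flow $D_i$ under \eqref{eqn:alphaflow} with parameter $\alpha_i$. Since this flow is the downward gradient flow of $YM_{\alpha_i}$, Theorem \ref{thm:one} produces a smooth limit $D'_i:=A_{\alpha_i}^{\infty}$ which solves the Yang--Mills $\alpha_i$-equation \eqref{eqn:alpha} and satisfies $YM_{\alpha_i}(D'_i)\leq YM_{\alpha_i}(D_i)$; consequently $YM(D'_i)\leq YM(D_i)+o(1)$, so $\{D'_i\}$ is still a minimizing sequence while enjoying the regularity of an elliptic solution.

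Next I would establish a static $\varepsilon$-regularity theorem by specializing the Bochner estimate and local monotonicity used to prove Theorem \ref{thm:two} to time-independent solutions (these are valid uniformly in $\alpha$ close to $1$): there exist $\varepsilon_0,r_0>0$ such that any Yang--Mills $\alpha$-connection $D$ with $\int_{B_r(x)}\abs{F_D}^2<\varepsilon_0$ and $r\leq r_0$ obeys $\sup_{B_{r/2}(x)}\abs{F_D}\leq Cr^{-2}\varepsilon_0^{1/2}$, with covariant derivative bounds of every order coming from the bootstrap of Lemma 3.6. Set
\begin{equation*}
S:=\set{x\in M\;:\;\liminf_{i\to\infty}\int_{B_r(x)}\abs{F_{D'_i}}^2\geq \varepsilon_0\ \text{for every}\ r>0}.
\end{equation*}
A standard covering argument using $\sup_i YM(D'_i)<\infty$ forces $S$ to be finite. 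On any compact $K\subset M\setminus S$, the uniform $C^k$ bounds combined with Uhlenbeck's Coulomb-gauge theorem and patching produce gauge transformations $\phi_i$ with $\phi_i^*D'_i\to D'_{\infty}$ smoothly on $K$. Passing to the limit in \eqref{eqn:alpha} shows $D'_{\infty}$ is Yang--Mills. The identification of $D'_{\infty}$ with Sedlacek's $D_{\infty}$ up to gauge comes from the fact that both $D_i$ and $D'_i$ are minimizing and converge weakly in $W^{1,2}_{\loc}(M\setminus S)$ to Yang--Mills connections on gauge-equivalent underlying bundles, together with a uniqueness argument in Coulomb gauge.

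At each $x\in S$, I would pick blow-up scales $\lambda_i\to 0$ so that the curvature maxima of $D'_i$ in $B_{\lambda_i}(x)$ are captured, rescale via $\tilde D_i(y):=D'_i(\exp_x(\lambda_i y))$ to obtain $\alpha_i$-Yang--Mills connections on expanding balls in $\R^4$, and extract a smooth blow-up limit after Uhlenbeck gauge fixing. Since $YM$ is conformally invariant in the limit $\alpha_i\to 1$ and the limiting connection has finite energy, Uhlenbeck's removable singularity theorem extends it across infinity to a Yang--Mills connection $\tilde D_j$ on a bundle $E_j$ over $S^4$. Iterating the blow-up at successively finer scales exhausts all bubbles at each point of $S$ and yields the finite list $\tilde D_1,\dots,\tilde D_l$.

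The principal obstacle is the energy identity
\begin{equation*}
\lim_{i\to\infty}YM(D_i)=YM(D_{\infty})+\sum_{j=1}^{l}YM(\tilde D_j),
\end{equation*}
which requires proving that no energy is lost in the necks joining the body to the bubbles. The usual conformal rescaling arguments are slightly obstructed because \eqref{eqn:alpha} is not exactly conformally invariant when $\alpha_i>1$. My plan is to derive a Pohozaev-type identity for \eqref{eqn:alpha} with error $O(\alpha_i-1)$, and combine it with a three-annulus lemma on dyadic annuli between bubble scales to force the radial component of $F_{D'_i}$ to decay geometrically across the neck. Controlling this error term uniformly in $\alpha_i\to 1$, while simultaneously extracting the bubble tree, is the delicate technical point on which the whole theorem rests.
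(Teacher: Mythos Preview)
Your overall Sacks--Uhlenbeck strategy is right, but there are two substantive divergences from the paper's argument, one of which is a genuine gap.

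First, the paper takes $D'_i := D_i(1)$, the $\alpha_i$-flow at the \emph{fixed} time $t=1$, rather than the $t\to\infty$ limit $A^\infty_{\alpha_i}$ that you propose. This matters for the gauge-equivalence step: with their choice one has directly
\[
\|D'_i - D_i\|_{L^2(M)}^2 \;\leq\; \int_0^1\!\!\int_M \Bigl|\pfrac{A}{t}\Bigr|^2\,dv\,dt \;\leq\; YM_{\alpha_i}(D_i)-YM_{\alpha_i}(D'_i)\;\longrightarrow\;0,
\]
and this $L^2$-closeness is precisely what lets one compare the Sedlacek weak limit of $D_i$ with the strong limit of $D'_i$ on each chart $U^\beta$ and patch the resulting local bundle maps into a global gauge $\eta$ with $\eta^*D'_\infty = D_\infty$. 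Your justification --- ``a uniqueness argument in Coulomb gauge'' --- does not work: Yang--Mills minimizers on a given bundle are not unique in general, so knowing only that both sequences are minimizing does not force their limits to be gauge-equivalent. Moreover, with your choice $D'_i = A^\infty_{\alpha_i}$ the analogous Cauchy--Schwarz estimate over $[0,t_k]$ picks up a factor $t_k^{1/2}\to\infty$, so the $L^2$-closeness between $D_i$ and $D'_i$ is lost and the paper's patching argument no longer applies.

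Second, and more strikingly, the paper's proof of the energy identity contains \emph{no neck analysis whatsoever}. Because $D'_i$ is minimizing, the blow-up automatically gives
\[
m(E)=\lim_i YM(D'_i)\;\geq\; YM(D'_\infty)+\sum_j YM(\tilde D_j)\;\geq\; m(E')+\sum_j m(E_j),
\]
and the reverse inequality $m(E)\leq m(E')+\sum_j m(E_j)$ is a soft topological fact (Proposition~\ref{prop:always}): one glues near-minimizers on $E'$ and the $E_j$, flattened near the gluing points, to produce a connection on the connected sum $E=E'\#E_1\#\cdots\#E_l$ with energy arbitrarily close to the sum. Equality everywhere then forces each bubble $\tilde D_j$ to realize $m(E_j)$ and in particular gives the energy identity for free. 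Your proposed Pohozaev/three-annulus route is a much harder programme --- you yourself flag it as the delicate point --- and for a minimizing sequence it is simply unnecessary.
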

This improves Theorem 5.5 of \cite{Se} because the convergence of $\phi_i^*{D'}_i$ is smooth.  (See \cite{Isobe} for a similar discussion using Sobolev bundles and the weak convergence.)

\medskip

Finally, we would like to discuss some potential application of the  Yang-Mills  $\alpha$-flow to the Morse theory of the Yang-Mills functional. It is well known that the Yang-Mills functional in dimension four does not satisfy the Palais-Smale condition. Many efforts have been made in this direction (see \cite{Taubes} and the references therein). Following an idea in \cite{SU}, one expects to study the limiting solutions of the   $\alpha$-equations (1.3) as $\alpha$ goes to $1$. It seems that the Yang-Mills $\alpha$-flow provides a new analytic tool to prove the  existence of Yang-Mills connections. In Subsection 4.4, we use it as the analytic tool to provide a new proof of the existence of the nonminimal Yang-Mills connection on $S^4$, which is due to Sibner, Sibner and Uhlenbeck \cite{SSU}.

\medskip

The rest of the paper is organized as follows: In Section \ref{sec:alphaflow}, we prove Theorem \ref{thm:one} and some other analytic results needed for the applications. In Section \ref{sec:convergence}, we study the limit of the $\alpha$-flow as $\alpha$ goes to $1$ and prove Theorem \ref{thm:two}. In the final section, we study serval applications of the $\a$-flow.

\section{Existence of the $\alpha$-flow and its equivalent flow}
\label{sec:alphaflow}
\subsection{Local existence of the $\alpha$-flow}
\label{sub:deturk}
It is well known that (\ref{eqn:alphaflow}) is not a parabolic system and that this difficulty can be overcome by using a kind of Deturk trick. Throughout this paper, let $D_{ref}$ be a fixed smooth background connection.

Let $D_0=D_{ref} +A_0$ be a given smooth connection in $E$.

 Following \cite{St2}, we consider an equivalent flow
\begin{equation}\label {eqn:modifiedflow}
    \pfrac{\bar D}{t}=-\bar D^*F_{\bar D}+(\alpha-1)\frac{*( d|F_{\bar D}|^2 \wedge *F_{\bar D})}
    {1+\abs{F_{\bar D}}^2}-\bar D(\bar D^*a),
\end{equation}
with $\bar D(t)=D_{ref}+a(t)$ and $a(0)=A_0$. Then the equivalent flow is a nonlinear parabolic
system. By the well-known theory of partial differential
equations, there is a unique smooth solution of (\ref{eqn:modifiedflow})
defined on $M\times[0,T]$ for some $T>0$. By the
theory of ordinary differential equations, there is a unique
solution to the following initial problem:
 \begin{equation}\label {eqn:ODE} \frac d{dt} S=-S\circ (\bar
 D^*a),
\end{equation}
 $M\times [0, T]$, with initial value  $S(0)=I$. Here $S(t)$ is a global gauge transformation and $I$ is the trivial one.

Setting
\begin{equation*}
  D=(S^{-1})^* \bar{D},
\end{equation*}
we have (e.g. see \cite {St2}, \cite {Ho})
\[F_{\bar D}=S^{-1} F S,\quad \bar D(\bar D^*a) = \bar D\circ (\bar D^*a)-\bar D^*a \circ \bar D.\]
Combining (\ref{eqn:modifiedflow}), (\ref{eqn:ODE}) with the above facts yields
  \begin{eqnarray*}
    \frac d{dt} D&=& \frac {dS}{dt} \circ \bar D \circ S^{-1}  +S\circ \frac {d \bar D}{dt} \circ  S^{-1} +S\circ \bar D \circ \frac {dS^{-1}}{dt}  \\
    &=&S\left ( -\bar D^*F_{\bar D}+(\alpha-1)\frac{*( d |F_{\bar D}|^2\wedge *F_{\bar D})} {1+\abs{F_{\bar D}}^2}\right )S^{-1}\\
    &=&-D_A^*F_A+(\alpha-1)\frac{*( d |F_A|^2 \wedge *
    F_A)}{1+\abs{F_A}^2}.
 \end{eqnarray*}
 This shows that
$D=(S^{-1})^* \bar D$ satisfies the Yang-Mills $\alpha$-flow with
$D(0)=D_0$ in $M\times [0,T]$ for some $T>0$.

Next, we remark that the smooth solution of the Yang-Mills
$\alpha$-flow is unique. In fact, let $D_i=D_{ref}+A_i (i=1,2)$ be two smooth
solutions to the Yang-Mills $\alpha$-flow with $A_i(0)=A_0$.
By the theory of parabolic equations, there is a unique local
smooth solution of the parabolic system of second order:
 \begin{eqnarray}
   \label {eqn:gaugeflow}
   \frac d{dt} S_i=- (D_{ref} +A_i)^*[ A_i  S_i + D_{ref} S_i]
 \end{eqnarray}
 with $S(0)=I$. By computation, we can check that the connections $\bar{D}_i=S^*(D_i)$ are two solutions to the modified flow (\ref{eqn:modifiedflow}) with the same initial value. Hence, $\bar{D}_1$ and $\bar{D}_2$ are the same. Moreover, (\ref{eqn:gaugeflow}) is nothing but the ODE (\ref{eqn:ODE}). By the uniqueness of ODEs, we know $S_i$ and hence $D_i$ are the same.

A similar method to
prove uniqueness was used for the Ricci flow and also for the
Seiberg--Witten flow \cite {HS}. Therefore, we have shown that the
$\alpha$-flow has a unique solution in $M\times [0,T)$ for some
$T>0$.

\subsection{Energy inequality of the $\alpha$-flow}

\begin{lem}  \label{lem:energyinequality} Let $A(t)$ be a solution to the Yang-Mills $\alpha$-flow in $M\times
 [0,T)$ with initial value $A(0)=A_0$. For each $0<t<T$, we have
\begin{equation} \label{Energy identity}
  \int_M (1+|F|^2 )^{\alpha} \,dv +
  2\alpha\int_0^t \int_M (1+\abs{F}^2)^{\alpha-1}\abs{\pfrac{A}{s}}^2 dv\,ds= \int_M (1+|F_{A_0}|^2 )^{\alpha}
  \,dv.
\end{equation}
\end{lem}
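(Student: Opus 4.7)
The plan is to differentiate the left-hand side, $YM_\alpha(A(t)) = \int_M (1+|F_A|^2)^\alpha\,dv$, in time, and recognize the Yang-Mills $\alpha$-flow as (proportional to) the negative gradient of $YM_\alpha$ with respect to a suitable weighted inner product, so that the time derivative is manifestly negative. Regularity is not an issue since we are working with a smooth solution on $M\times[0,T)$ and $M$ is closed, so every integration by parts and differentiation under the integral is justified.

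First I would compute the pointwise time derivative of the integrand. Using the standard identity $\partial_t F_A = D_A(\partial_t A)$, one has
\begin{equation*}
    \frac{d}{dt}(1+|F_A|^2)^\alpha = 2\alpha\,(1+|F_A|^2)^{\alpha-1}\langle F_A,\,D_A(\partial_t A)\rangle.
\end{equation*}
Integrating over $M$ and integrating by parts moves the covariant exterior derivative onto the weighted curvature:
\begin{equation*}
    \frac{d}{dt}YM_\alpha(A) = 2\alpha\int_M \bigl\langle D_A^*\!\bigl((1+|F_A|^2)^{\alpha-1}F_A\bigr),\,\partial_t A\bigr\rangle\,dv.
\end{equation*}

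The key algebraic step is to rewrite the integrand using the Leibniz rule for $D_A^*$ acting on the product of a scalar function and a $2$-form. Writing $f=(1+|F_A|^2)^{\alpha-1}$, one has $D_A^*(fF_A) = f\,D_A^*F_A - *(df\wedge *F_A)$, and since $df = (\alpha-1)(1+|F_A|^2)^{\alpha-2}\,d|F_A|^2$, this becomes
\begin{equation*}
    D_A^*\!\bigl((1+|F_A|^2)^{\alpha-1}F_A\bigr) = (1+|F_A|^2)^{\alpha-1}\left[D_A^*F_A - (\alpha-1)\,\frac{*(d|F_A|^2\wedge *F_A)}{1+|F_A|^2}\right].
\end{equation*}
But the bracket on the right is exactly $-\partial_t A$ by the $\alpha$-flow equation \eqref{eqn:alphaflow}. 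Substituting this in gives
\begin{equation*}
    \frac{d}{dt}YM_\alpha(A) = -2\alpha\int_M (1+|F_A|^2)^{\alpha-1}\,|\partial_t A|^2\,dv.
\end{equation*}
Integrating from $0$ to $t$ and rearranging yields the claimed identity.

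The computation is essentially just bookkeeping, and the only subtle point is the Leibniz identity for $D_A^*$ applied to $(1+|F_A|^2)^{\alpha-1}F_A$ with the correct Hodge-star signs; once this is in place, the cancellation that identifies $D_A^*\!\bigl((1+|F_A|^2)^{\alpha-1}F_A\bigr)$ with $-(1+|F_A|^2)^{\alpha-1}\partial_t A$ is immediate from the definition of the flow and produces the energy equality rather than just an inequality.
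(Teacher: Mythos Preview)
Your proof is correct and follows essentially the same approach as the paper: differentiate $YM_\alpha(A(t))$ in time, use $\partial_t F_A = D_A(\partial_t A)$, integrate by parts to obtain $\langle D_A^*((1+|F_A|^2)^{\alpha-1}F_A),\partial_t A\rangle$, and then identify this with $-(1+|F_A|^2)^{\alpha-1}|\partial_t A|^2$ via the flow equation. You are simply more explicit than the paper about the Leibniz identity $D_A^*(fF_A)=fD_A^*F_A-*(df\wedge *F_A)$ that underlies this last identification.
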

\begin{proof} Note $\frac {\partial F}{\partial t} =D\frac
{\partial A}{\partial t}$. Then,
    multiplying (\ref{eqn:alphaflow}) by $(1+\abs{F}^2)^{\alpha-1} \partial_t A$ and integrating by
    parts, we have
\begin{eqnarray*}
    \frac{d}{dt}\int_M (1+\abs{F}^2)^{\alpha} dv
    &=& 2\alpha \int_M \left <(1+\abs{F}^2)^{\alpha-1} F, \frac{\partial F}{\partial t}\right > dv\\
    &=& 2\alpha \int_M  \left <D^* ( (1+\abs{F}^2)^{\alpha-1}F),\frac{\partial A}{\partial t} \right >\,dv \\
    &=& -2\alpha \int_M (1+\abs{F}^2)^{\alpha-1}\abs{\pfrac{A}{t}}^2 dv.
\end{eqnarray*}
Then (\ref{Energy identity}) follows from integrating over $[0,t]$.
\end{proof}

\begin{lem}  \label{lem:localenergy} Let $A(t)$ be a solution to the Yang-Mills $\alpha$-flow in $M\times [0,T)$. For each $0<t_1<t_2<T$, we have
\begin{equation} \label{eqn:localenergy}
    \int_{B_R(x)} (1+\abs{F}^2)^\alpha (t_2) dv \leq \int_{B_{2R(x)}} (1+\abs{F}^2)^\alpha (t_1) dv + C\frac{t_2-t_1}{R^2} YM_0.
\end{equation}
Here $YM_0$ is an upper bound of the overall energy.
\end{lem}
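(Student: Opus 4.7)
The plan is to adapt the global energy identity in Lemma \ref{lem:energyinequality} to a local version by inserting a spatial cutoff. Fix a smooth cutoff function $\eta$ on $M$ with $\eta\equiv 1$ on $B_R(x)$, $\mathrm{supp}\,\eta\subset B_{2R}(x)$, and $|\nabla\eta|\le C/R$. I would study the localized quantity
\begin{equation*}
  I(t)=\int_M \eta^2 (1+|F|^2)^\alpha \,dv
\end{equation*}
and differentiate in $t$.

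The first preparatory step is to rewrite the flow in divergence form. Expanding $D^*\bigl((1+|F|^2)^{\alpha-1}F\bigr)=(1+|F|^2)^{\alpha-1}D^*F-(\alpha-1)(1+|F|^2)^{\alpha-2}*(d|F|^2\wedge *F)$ and comparing with (\ref{eqn:alphaflow}) shows that
\begin{equation*}
  (1+|F|^2)^{\alpha-1}\pfrac{A}{t}=-D^*\bigl((1+|F|^2)^{\alpha-1}F\bigr).
\end{equation*}
Using $\partial_tF=D(\partial_t A)$ and integration by parts with $\eta^2$, together with the identity $D^*(f\omega)=fD^*\omega-*(df\wedge *\omega)$ for scalar $f$ and bundle-valued forms $\omega$, the argument of Lemma \ref{lem:energyinequality} yields
\begin{equation*}
  \frac{d}{dt}I(t)=-2\alpha\int_M\eta^2(1+|F|^2)^{\alpha-1}\Bigl|\pfrac{A}{t}\Bigr|^2 dv-4\alpha\int_M\eta\,\bigl\langle *(d\eta\wedge *((1+|F|^2)^{\alpha-1}F)),\partial_t A\bigr\rangle dv.
\end{equation*}

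The second step is to absorb the cross term by Cauchy-Schwarz/Young: it is bounded in absolute value by
\begin{equation*}
  2\alpha\int_M\eta^2(1+|F|^2)^{\alpha-1}\Bigl|\pfrac{A}{t}\Bigr|^2 dv+C\alpha\int_M|d\eta|^2(1+|F|^2)^{\alpha-1}|F|^2 dv,
\end{equation*}
so the good negative term swallows the kinetic piece and one is left with
\begin{equation*}
  \frac{d}{dt}I(t)\le \frac{C}{R^2}\int_M(1+|F|^2)^\alpha \,dv\le \frac{C}{R^2}\,YM_0,
\end{equation*}
using $(1+|F|^2)^{\alpha-1}|F|^2\le (1+|F|^2)^\alpha$ and the monotonicity of the $\alpha$-energy from Lemma \ref{lem:energyinequality}. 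Integrating on $[t_1,t_2]$ and using $\eta\equiv 1$ on $B_R(x)$, $\mathrm{supp}\,\eta\subset B_{2R}(x)$ gives (\ref{eqn:localenergy}).

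This is a fairly routine cutoff-function argument; there is no deep obstacle. The one place to be careful is the bookkeeping of the $(\alpha-1)$-correction term, which must be folded back into the divergence form so that the good dissipation term appears with the correct weight $(1+|F|^2)^{\alpha-1}$. Once this is done, everything follows the standard pattern used by Struwe \cite{St2} for the ordinary Yang-Mills flow.
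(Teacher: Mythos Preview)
Your proposal is correct and follows essentially the same route as the paper: both insert a cutoff $\varphi$ (your $\eta$) supported in $B_{2R}$, differentiate $\int_M \varphi^2(1+|F|^2)^\alpha\,dv$, use the divergence form $(1+|F|^2)^{\alpha-1}\partial_t A=-D^*\bigl((1+|F|^2)^{\alpha-1}F\bigr)$ so that integration by parts produces the dissipation term $-2\alpha\int \varphi^2(1+|F|^2)^{\alpha-1}|\partial_t A|^2$ plus a cross term $\varphi\,\nabla\varphi\#(1+|F|^2)^{\alpha-1}F\#\partial_t A$, and then absorb the cross term by Young's inequality to leave only $C|\nabla\varphi|^2(1+|F|^2)^{\alpha-1}|F|^2\le \tfrac{C}{R^2}(1+|F|^2)^\alpha$. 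Your write-up is in fact a bit more explicit about the divergence-form step than the paper's, but the argument is identical.
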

\begin{proof}
    Let $\varphi$ be a cut-off function supported in $B_{2R}(x)$ and $\varphi\equiv 1$ on $B_{R}(x)$.
\begin{eqnarray*}
    \frac{d}{dt}\int_M \varphi^2 (1+\abs{F}^2)^{\alpha} dv
    &=& 2\alpha \int_M \varphi^2  \left <D^* ( (1+\abs{F}^2)^{\alpha-1}F),\frac{\partial A}{\partial t} \right > \\
    && + \varphi (1+\abs{F}^2)^{\alpha-1} F\# \nabla \varphi \# \pfrac{A}{t} \, dv \\
    &\leq &- \int_M \varphi^2 (1+\abs{F}^2)^\alpha \abs{\pfrac{A}{t}}^2 + (1+\abs{F}^2)^{\alpha-1} \abs{\nabla \varphi}^2 \abs{F}^2 dv.
\end{eqnarray*}
The lemma follows from integration over $[t_1,t_2]$.
\end{proof}

\subsection{Global existence of the $\alpha$-flow}

In this section, we will show that the solution of the Yang-Mills
$\alpha$-flow (for small $\alpha-1$) exists in $M\times [0,T)$ for all $T>0$.

\begin{thm}\label{thm:globalexist}
    Let $D_0=D_{ref} +A_0$ be a smooth connection in $E$. Then
      there is a smooth solution $A$ to the $\alpha$-flow
      (\ref{eqn:alphaflow}) with initial value $A_0$ in $M\times [0,t_0)$ for a constant
$t_0>0$ depending only on $YM_\alpha(D_0)$.
\end{thm}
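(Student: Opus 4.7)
The plan is to exploit the parabolic structure of the modified flow (\ref{eqn:modifiedflow}) rather than work directly with the non-parabolic $\alpha$-flow. From Subsection \ref{sub:deturk} one already has a smooth solution $\bar D(t) = D_{ref} + a(t)$ of (\ref{eqn:modifiedflow}) on some maximal interval $[0, T_{\max})$, and the uniqueness argument there shows that the existence time for the original $\alpha$-flow coincides with this. My task is therefore to produce an a priori bound, for the solution of (\ref{eqn:modifiedflow}), on a uniform interval $[0, t_0]$ with $t_0 = t_0(YM_\alpha(D_0))$, after which gauge transformation via (\ref{eqn:ODE}) transfers smoothness back to the $\alpha$-flow.

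First I would record the energy information: by Lemma \ref{lem:energyinequality}, $YM_\alpha(\bar D(t)) \leq YM_\alpha(D_0)$, and by Lemma \ref{lem:localenergy} the curvature mass cannot concentrate on balls of a definite scale in time comparable to the square of that scale. Together these give an $L^\infty_t L^2_x$ control of $F$ together with a local-in-space analogue at any scale $R$.

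Next, I would derive a Bochner-type evolution inequality for $|F_{\bar D}|^2$ along (\ref{eqn:modifiedflow}) of the schematic form
\begin{equation*}
    \left(\partial_t - \Delta\right)|F|^2 + 2|\nabla F|^2 \leq C|F|^3 + (\alpha-1)\,\mathcal{N}(F, \nabla F),
\end{equation*}
where $\mathcal{N}$ collects the extra terms generated by the $\alpha$-correction and the Deturk gauge-fixing. Testing this against $(1+|F|^2)^{\alpha-1}$ and exploiting the $\alpha$-weighted dissipation $\int_M (1+|F|^2)^{\alpha-1}|\partial_t a|^2\,dv$ already visible in the proof of Lemma \ref{lem:energyinequality}, together with the four-dimensional Sobolev embedding $W^{1,2}\hookrightarrow L^4$ and the non-concentration from Lemma \ref{lem:localenergy}, I expect to close a differential inequality of the form
\begin{equation*}
    \frac{d}{dt} \Phi(t) \leq C_1 \Phi(t) + C_2, \qquad \Phi(t) := \int_M (1+|F|^2)^{q/2}\,dv,
\end{equation*}
for a suitable $q > 2$, with constants $C_1, C_2$ depending only on $YM_\alpha(D_0)$. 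Iterating in the spirit of Moser yields an $L^\infty$ bound on $|F|$ over $M \times [0, t_0]$, after which standard parabolic bootstrapping for (\ref{eqn:modifiedflow}) gives smoothness to all orders on $[0, t_0]$.

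The principal obstacle, as flagged in the introduction, is that a direct adaptation of Struwe's $H^2$-method for the Yang-Mills flow produces the critical terms $\int_M |\nabla F|^4\,dv$ and $\int_M |F|^4\,dv$ that cannot be absorbed by the parabolic dissipation $\int_M |\nabla F|^2\,dv$ alone. My strategy for handling them is to combine the $\alpha$-weighted dissipation above with interpolation between the $L^2$ curvature bound and higher $L^p$ norms, taking advantage of the fact that, at fixed $\alpha > 1$, the weight $(1+|F|^2)^{\alpha-1}$ grows in regions where $|F|$ is large and thus contributes precisely where the critical terms threaten to blow up. It is this $\alpha$-weighted reservoir, unavailable for the ordinary Yang-Mills flow, that I expect to close the estimate and extract the time $t_0$ depending only on $YM_\alpha(D_0)$.
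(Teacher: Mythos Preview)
Your proposal has a genuine gap: you are trying to close the estimate by working with the curvature $F$ via a Bochner inequality, and you hope the ``$\alpha$-weighted reservoir'' $\int (1+|F|^2)^{\alpha-1}|\partial_t A|^2\,dv$ will absorb the critical terms $\int |\nabla F|^4\,dv$ and $\int |F|^4\,dv$. This does not work. The weight $(1+|F|^2)^{\alpha-1}$ for $\alpha$ close to $1$ grows far too slowly to compensate for these quartic terms, and the dissipation controls $|\partial_t A|^2$, not $|\nabla F|^2$ with any useful extra weight. Interpolation between the $L^{2\alpha}$ bound on $F$ (which is only marginally better than $L^2$) and higher $L^p$ norms likewise does not produce a closed inequality with constants depending only on $YM_\alpha(D_0)$. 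The paper itself explicitly flags (in the introduction) that the Struwe-type $H^2$ approach on $F$ fails for exactly this reason.

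The paper's route is genuinely different: it abandons the curvature and works with the connection form $a(t)$ in the modified flow. The key missing ingredient in your outline is Lemma~\ref{lem:byUhlenbeck}, an Uhlenbeck-type gauge fixing that converts the bound on $YM_\alpha(D_0)$ into a $W^{1,2\alpha}$ bound on $A_0$ relative to $D_{ref}$. With this in hand, one sets up a continuation argument based on \emph{local smallness} of $\int_{B_{r_0}}(|a|^2+|\nabla_{ref}a|^2)\,dv$ on a fixed cover; multiplying the evolution equation for $a$ by $a$ and by $\triangle_{ref}a$, the dangerous nonlinearities $\int |a|^6$ and $\int |a|^2|\nabla_{ref}a|^2$ are absorbed via Sobolev and this local smallness (not via any $\alpha$-weighted dissipation). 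Higher-order control then comes from parabolic $L^p$ theory applied to the quasilinear system for $a$, together with Nirenberg interpolation. None of these steps is visible in your plan; the switch from $F$-estimates to $a$-estimates in a good gauge is the essential idea.
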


We note that together with Lemma \ref{lem:energyinequality} and the uniqueness of smooth solution to (\ref{eqn:alphaflow}), Theorem \ref{thm:globalexist} implies the global existence part of Theorem \ref{thm:one}.

The proof involves higher order estimates for parabolic systems. For that purpose, we resort to the modified flow (\ref{eqn:modifiedflow}) again. To start the proof, we need the following lemma.
\begin{lem}
  \label{lem:byUhlenbeck}
  Let $D$ be a smooth connection on $E$ with $YM_\alpha(D)$ bounded, and let $D_{ref}$ be some fixed reference connection on $E$. Then there exists a global smooth gauge transformation $s$ such that
  \begin{equation*}
    \norm{s^* D-D_{ref}}_{W^{1,2\alpha}(M)}\leq C.
  \end{equation*}
  Here $C$ is some constant depending only on $D_{ref}$ and $YM_\alpha(D)$.
\end{lem}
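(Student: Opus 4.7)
The plan is to apply Uhlenbeck's local Coulomb gauge theorem on a sufficiently fine cover and then patch the local gauges together. From $YM_\alpha(D)\leq C_0$ together with the pointwise inequality $(1+|F_D|^2)^\alpha \geq |F_D|^{2\alpha}$, we obtain $\|F_D\|_{L^{2\alpha}(M)}\leq C_1$. Since $2\alpha>2$, which is the critical Uhlenbeck exponent on the four-manifold $M$, Hölder's inequality yields
\begin{equation*}
\|F_D\|_{L^2(B_r(x))}\leq Cr^{2(\alpha-1)/\alpha}\|F_D\|_{L^{2\alpha}(M)}
\end{equation*}
for any $x\in M$, and this goes to $0$ as $r\to 0$. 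Fix a radius $r_0>0$ (depending only on $C_0$ and $\alpha$) small enough that $\|F_D\|_{L^2(B_{2r_0}(x))}<\varepsilon_U$ for every $x\in M$, where $\varepsilon_U$ is Uhlenbeck's threshold for the existence of a local Coulomb gauge with $W^{1,2\alpha}$ estimate.

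Fix, once and for all, a finite trivializing atlas $\{(U_k,\varphi_k)\}$ of $E$ with smooth transition functions (its choice depends on $E$ and $D_{ref}$, not on $D$), and cover $M$ by finitely many balls $\{B_{r_0}(x_i)\}_{i=1}^N$ each contained in some $U_{k(i)}$. In the trivialization over $U_{k(i)}$ write $D=d+A_i$. Uhlenbeck's local gauge theorem supplies a smooth gauge transformation $\sigma_i\colon B_{r_0}(x_i)\to G$ such that
\begin{equation*}
\|\sigma_i^{-1}A_i\sigma_i+\sigma_i^{-1}d\sigma_i\|_{W^{1,2\alpha}(B_{r_0}(x_i))}\leq C\|F_D\|_{L^{2\alpha}(B_{2r_0}(x_i))}\leq C_2.
\end{equation*}
Because $D_{ref}$ is fixed and smooth, its local connection matrices $A_{ref,k}$ satisfy $\|A_{ref,k}\|_{W^{1,2\alpha}(U_k)}\leq C(D_{ref})$, so on each individual ball the difference $\sigma_i^*D-D_{ref}$ is already bounded in $W^{1,2\alpha}$.

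The main obstacle, and the heart of the proof, is to assemble the local gauges $\sigma_i$ into a single globally defined smooth gauge transformation $s$ of $E$ while retaining the $W^{1,2\alpha}$ estimate. I would order the balls so that $B_i$ intersects $B_1\cup\cdots\cup B_{i-1}$ for every $i\geq 2$, and inductively extend $s$ from $B_1\cup\cdots\cup B_{i-1}$ to $B_1\cup\cdots\cup B_i$ by interpolating between the current $s$ and $\sigma_i$ on the overlap (in a local chart of $G$, via the exponential map) through a fixed cutoff supported in a collar of the overlap. The key technical input is that $2\alpha>2$, so $W^{2,2\alpha}\hookrightarrow C^{0,\gamma}$: the transition $\sigma_i\cdot s^{-1}$ on the overlap is then continuous, and its $W^{2,2\alpha}$ norm is controlled by the Uhlenbeck estimates for $\sigma_i$ and the inductive bound on $s$, so the cutoff interpolation contributes only a bounded $W^{1,2\alpha}$ error. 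Since $N$ depends only on $M$, $\alpha$ and $YM_\alpha(D)$, the iteration closes after finitely many steps and produces a global smooth $s$ satisfying $\|s^*D-D_{ref}\|_{W^{1,2\alpha}(M)}\leq C(D_{ref},YM_\alpha(D))$, as required.
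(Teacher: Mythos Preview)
Your approach is genuinely different from the paper's. The paper argues by contradiction: assuming no such bound exists, it produces a sequence $D_i$ with $YM_\alpha(D_i)$ bounded but $\norm{s_i^*D_i - D_{ref}}_{W^{1,2\alpha}}\to\infty$ for every choice of $s_i$, and then invokes Uhlenbeck's \emph{global} weak compactness theorem from \cite{U2} to obtain gauges $s_i$ for which the local connection matrices are uniformly bounded in $W^{1,2\alpha}$, a contradiction. In other words, the paper simply cites the finished product of \cite{U2}. You instead try to reconstruct that product by hand: local Coulomb gauges plus an explicit patching.

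The first half of your argument (Hölder to get $L^2$-smallness on small balls, then Uhlenbeck's local Coulomb gauge with the $W^{1,2\alpha}$ estimate) is fine. The gap is in the patching. Your interpolation ``in a local chart of $G$, via the exponential map'' presupposes that the transition $\tau=\sigma_i\, s^{-1}$ on the overlap takes values in a single exponential chart of $G$. A $W^{2,2\alpha}\hookrightarrow C^{0,\gamma}$ bound on $\tau$ does \emph{not} give this; it only bounds the oscillation. You must first multiply $\sigma_i$ by a constant in $G$ so that $\tau$ hits the identity at some point of the overlap, and then use the Hölder bound together with a sufficiently small $r_0$ to keep $\tau$ near the identity. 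You do not carry out this normalization, nor do you check that $r_0$ can be chosen (depending only on $YM_\alpha(D)$) so that the resulting oscillation is below the injectivity radius of $\exp$. More seriously, the overlap $B_i\cap(B_1\cup\cdots\cup B_{i-1})$ need not be connected, and the normalizing constant required on one component may differ from that on another; then no single modification of $\sigma_i$ makes $\tau$ globally close to the identity, and the cutoff interpolation breaks down. Handling this is precisely the delicate part of Uhlenbeck's patching in \cite{U2} (careful choice of cover, ordering, and inductive control of the transitions), and your sketch does not address it.

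So your route is not wrong in spirit, but as written it is an incomplete re-proof of Uhlenbeck's compactness theorem. Either supply the missing normalization and overlap analysis following \cite{U2}, or do as the paper does and invoke the global result directly.
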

\begin{proof}
    Although not explicitly stated, the proof is essentially contained in the paper \cite{U2} of Uhlenbeck. We briefly indicate how it follows from \cite{U2}.

    If the lemma is not true, then there exists a sequence of $D_i$ with  $YM_\alpha(D_i)$ uniformly bounded such that for any smooth gauge transformation $s_i$, we have
  \begin{equation}\label{eqn:contra}
    \norm{s_i^*D_i-D_{ref}}_{W^{1,2\a}(M)}\geq i.
  \end{equation}
  It is shown in \cite{U2} that by passing to some subsequence, there exists $s_i$ such that $s_i^*D_i$ converges weakly in $W^{1,p}$ to some $D_\infty$ for $p=2\alpha$.

  In the proof, Uhlenbeck chose some $j$ sufficiently large and wrote $s_i^*D_i$ in local trivialization $\sigma_\alpha(j)$ as
  \begin{equation*}
    d+\rho^{-1}_\alpha(i) d\rho_\alpha(i) +\rho^{-1}_\alpha(i) A(\alpha,i)\rho_\alpha(i).
  \end{equation*}
  Here we  refer the reader to \cite{U2} to see the definitions of $\sigma_\alpha(i)$, $\rho_\alpha(i)$ and $A(\alpha,i)$.
 Moreover, Uhlenbeck proved that
  \begin{equation*}
    \rho^{-1}_\alpha(i) d\rho_\alpha(i) +\rho^{-1}_\alpha(i) A(\alpha,i)\rho_\alpha(i)
  \end{equation*}
  is bounded in $W^{1,p}$ uniformly in  $i$. Although the local expression of $D_{ref}$ in the trivialization $\sigma_\alpha(j)$ has no explicit bound, it is independent of $i$. Hence $s^*_i D_i-D_{ref}$ is bounded in $W^{1,p}$ uniformly in $i$ locally in the trivialization $\sigma_\alpha(j)$. Since $s^*_i D_i-D_{ref}$ is a tensor and we may show the same bound in $\sigma_\beta(j)$ for $\beta\ne \alpha$. We get a contradiction with (\ref{eqn:contra}) and the lemma is proved.
\end{proof}

With this lemma, we may assume without loss of generality that $A_0$ in Theorem \ref{thm:globalexist} has bounded $W^{1,2\alpha}$ norm.

\begin{proof} [Proof of Theorem \ref{thm:globalexist}]
    Instead of (\ref{eqn:alphaflow}), we shall discuss (\ref{eqn:modifiedflow}). By our discussion in Subsection \ref{sub:deturk}, we know this is sufficient.

    For some $\varepsilon>0$ to be determined later, the H\"older inequality and Lemma \ref{lem:byUhlenbeck} imply that there exist  $r_0>0$ and $C_1>0$ such that for all $x\in M$,
  \begin{equation}
    \int_{B_{r_0}(x)} \abs{A_0}^2 +\abs{\nabla_{ref} A_0}^2 dx \leq \varepsilon/2
    \label{eqn:condition1}
  \end{equation}
  and
  \begin{equation}
    \int_M \abs{A_0}^2+\abs{\nabla_{ref} A_0}^2 dx \leq C_1.
    \label{eqn:condition2}
  \end{equation}
  Let $\{x_i\in M| i=1,\cdots,L\}$ be a finite number of points in $M$ such that $\{B_{r_0}(x_i)\}$ covers $M$ and for each $i$ there are at most $k$ different $j$'s ball $B_{r_0}(x_j)$ with $B_{2r_0}(x_i)\cap B_{r_0}(x_j)\ne \emptyset$. Although $L$ depends on $\varepsilon$, it is important to note that $k$ is a universal constant depending only on the dimension.

  Let $D(t)=D_{ref}+a(t)$ be the local solution to (\ref{eqn:modifiedflow}) defined on $[0,T)$. Since $a(t)$ is smooth, there exists a $t_1>0$  which is the maximal time in $[0,T]$ such that  for all $i=1,\cdots,L$,
  \begin{equation}
    \sup_{0\leq t<t_1}
    \int_{B_{r_0}(x_i)} \abs{a(t)}^2 +\abs{\nabla_{ref} a(t)}^2 dx \leq \varepsilon
    \label{eqn:condition3}
  \end{equation}
  and
  \begin{equation}
    \sup_{0\leq t<t_1} \int_M \abs{a(t)}^2+\abs{\nabla_{ref} a(t)}^2 dx + \int_0^{t_1}\int_M \abs{\nabla^2_{ref}a }^2 dxdt \leq 2C_1.
    \label{eqn:condition4}
  \end{equation}

  We shall find $t_0$ depending on $YM_\alpha(D_0)$ and $\alpha$ alone (the exact value of $t_0$ is determined in the process of proof) and prove that $T\geq t_0$, which concludes the proof of the theorem. If not, then either $t_1<T<t_0$ or $t_1=T<t_0$. It suffices to show that neither case is possible.

    Before we give the details of the proof, we outline the idea of the proof. By Lemma \ref{lem:byUhlenbeck}, we have (\ref{eqn:condition1}) and (\ref{eqn:condition2}) for the initial value $a(0)$.  Step 1 below shows that as long as the solution exists, (\ref{eqn:condition3}) and (\ref{eqn:condition4}) must remain true for $t\in [0,t_0]$ for some $t_0>0$ depending only on $YM_\alpha(D_0)$. The condition (\ref{eqn:condition3}) is a `smallness' condition, which will enable us to prove higher derivative estimates for the nonlinear parabolic system (\ref{a1}) of second order. This is done in  below Step 2.

  {\bf Step 1:} $t_1<T<t_0$ is not possible.

   To study the evolution of $a(t)$,   we rewrite the flow equation (\ref{eqn:modifiedflow}) as
    \begin{eqnarray}\label{a1}
       \quad  \frac{\partial a}{\partial t} &=& \triangle_{ref} a +(\nabla_{ref} a\# a +a\#a\#a)-D_{ref}^*F_{ref}\\
        &&+(\alpha-1) \psi(F_D)\# (\nabla_{ref}^2a
        +a\#\nabla_{ref}
        a+a\#a\#a + \nabla_{ref}F_{ref}),\nonumber
    \end{eqnarray}
    with the initial value $a(0)=A_0$,
    where $\psi (F_D)$ is a bounded function depending on $F_D$. For any $i$, let $\phi_i$ be a cut-off function supported in $B_{2r_0}(x_i)$ with $\phi_i\equiv 1$ on $B_{r_0}(x_i)$. For simplicity, we write $\phi$ when it applies to all $\phi_i$.

Multiplying $(\ref {a1})$ by $a$ and using Young's inequality, we
have
\begin{eqnarray}\label{a2}
        &&\frac{d}{dt} \int_M  |a|^2\,dv +\int_M  |\nabla_{ref}a|^2\,dv\\
        &\leq &\frac 1 2\int_M  {|\nabla_{ref}
        a|^2}\,dv +C(\alpha-1)\int_M |\nabla_{ref}^2 a|^2\,dv + C \int_M {|a|^4}\,dv+C. \nonumber
    \end{eqnarray}
    By our choice of $t_1$, we have for $t<t_1$,
    \begin{equation*}
      \frac{d}{dt}\int_M \abs{a}^2 dv +\frac{1}{2} \int_M \abs{\nabla_{ref} a}^2 dv \leq C(\alpha-1)\int_M \abs{\nabla_{ref}^2 a}^2 dv +C.
    \end{equation*}

Multiplying $(\ref {a1})$ by $\triangle_{ref} a$, we have
\begin{eqnarray}\label{31}
        &&\frac{d}{dt}\int_M |\nabla_{ref} a|^2\,dv +\int_M |\triangle_{ref} a|^2\,dv\\
        &\leq &\frac 1 2\int_M |\triangle_{ref}
        a|^2\,dv+ C(\alpha-1)\int_M|\nabla_{ref}^2 a
        |^2\,dv
       \nonumber \\
        &+& \int_M(|\nabla_{ref} a|^2 |a|^2 +|a|^6) \,dv+C.\nonumber
    \end{eqnarray}
    By H\"older's inequality and the Sobolev inequality, we obtain
    \begin{eqnarray*}
      \int_M \abs{a}^6 dv &\leq& \sum_i \int_{B_{r_0}(x_i)} \abs{a}^6 dv \\
      &\leq& \sum_i \left( \int_{B_{r_0}(x_i)} \abs{a}^4 \right)^{1/2} \left( \int_{B_{r_0}(x_i)} \abs{a}^8 dv \right)^{1/2} \\
      &\leq&  \varepsilon \sum_i  \int_{B_{r_0}(x_i)}\abs{\nabla_{ref} a}^2 \abs{a}^2 + \abs{a}^4 dv \\
      &\leq& C \varepsilon \int_M \abs{\nabla_{ref} a}^2 \abs{a}^2 + \abs{a}^4 dv \\
      &\leq& C \varepsilon \int_M \abs{\nabla_{ref} a}^2 \abs{a}^2 dv + C. \\
    \end{eqnarray*}
    Similarly,
    \begin{eqnarray*}
      \int_M \abs{\nabla_{ref} a}^2\abs{a}^2 dv &\leq& \sum_i \int_{B_{r_0}(x_i)} \abs{\nabla_{ref} a}^2\abs{a}^2 dv \\
      &\leq& \sum_i \left( \int_{B_{r_0}(x_i)} \abs{a}^4 \right)^{1/2} \left( \int_{B_{r_0}(x_i)} \abs{\nabla_{ref}a}^4 dv \right)^{1/2} \\
      &\leq& \varepsilon \sum_i \int_{B_{r_0}(x_i)} \abs{\nabla^2_{ref}a}^2+ \abs{\nabla_{ref} a}^2 dv  \\
      &\leq& C \varepsilon \int_M \abs{\nabla^2_{ref}a}^2+ \abs{\nabla_{ref} a}^2 dv  \\
    \end{eqnarray*}
    Using integration by parts, we have
    \begin{equation*}
      \int_M \abs{\nabla^2_{ref} a}^2 dv \leq \int_M \abs{\triangle_{ref} a}^2 dv + C\int_M \abs{\nabla_{ref} a}^2 dv,
    \end{equation*}
    which implies
    \begin{equation*}
      \frac{3}{4}\int_M \abs{\nabla^2_{ref} a}^2 dv \leq \int_M \abs{\triangle_{ref} a}^2 dv + C.
    \end{equation*}
    In summary, by choosing $\alpha-1$ and $\varepsilon$ small, we have
    \begin{equation*}
      \frac{d}{dt} \int_M \abs{a}^2 +\abs{\nabla_{ref} a}^2 dv +\frac{1}{4} \int_M \abs{\nabla_{ref} a}^2 +\abs{\nabla^2_{ref} a}^2 dv\leq C
    \end{equation*}
    for $t\in [0,t_1]$.
    Integrating the above inequality yields that there exists $t_0>0$ such that (\ref{eqn:condition4}) remains true for $t_1\leq t_0$.

    For (\ref{eqn:condition3}), we need a local version of the above computation.
Multiplying $(\ref {a1})$ by $\phi_i^2 a$ and using Young's inequality, we
have
\begin{eqnarray}\label{locala2}
  &&\frac{d}{dt} \int_M  |a|^2\phi_i^2 \,dv +\frac{1}{2}\int_M  |\nabla_{ref}a|^2\phi_i^2 \,dv\\
        &\leq &C(\alpha-1)\int_M |\nabla_{ref}^2 a|^2\phi_i^2 \,dv +C. \nonumber
\end{eqnarray}
Here we have used the bound on $\abs{\nabla \phi_i}$ and $\int_M \abs{a}^4 dv$ for $t\leq t_1$.
Multiplying $(\ref {a1})$ by $\phi_i^2 \triangle_{ref} a$, we have
\begin{eqnarray}\label{local31}
  &&\frac{d}{dt}\int_M |\nabla_{ref} a|^2 \phi^2_i\,dv +\frac{1}{2}\int_M |\triangle_{ref} a|^2\phi_i^2 \,dv\\ \nonumber
        &\leq & C(\alpha-1)\int_M|\nabla_{ref}^2 a|^2 \phi_i^2\,dv
        + \int_M(|\nabla_{ref} a|^2 |a|^2\phi_i^2  +|a|^6\phi_i^2) \,dv+C\\\nonumber
    &&+ C\int_M \abs{\nabla_{ref} a }^2 \abs{\nabla \phi_i}^2 dv.
    \end{eqnarray}
    By integration by parts, we have
    \begin{eqnarray*}
      \frac{3}{4}\int_M \abs{\nabla^2_{ref} a}^2\phi_i^2 dv& \leq& \int_M \abs{\triangle_{ref} a}^2\phi_i^2 dv + C\int_M \abs{\nabla_{ref} a}^2(\phi_i^2+\abs{\nabla \phi_i}^2) dv \\
      &\leq& \int_M \abs{\triangle_{ref} a}^2 \phi^2_i dv + C,
    \end{eqnarray*}
    where we have used (\ref{eqn:condition4}) for $t<t_1$.

    We can deal with the main nonlinear terms as before.
    \begin{eqnarray*}
      \int_M \abs{a}^6 \phi_i^2 dv &\leq& C\varepsilon \int_M \abs{\nabla_{ref} (\varphi a^2)}^2 +\varphi^2 \abs{a}^4 dv \\
      &\leq& C\varepsilon \int_M (\abs{\nabla \phi_i}^2 +\phi_i^2) \abs{a}^4 + \phi_i^2 \abs{a}^2 \abs{\nabla_{ref} a}^2 dv \\
      &\leq& C\varepsilon \int_M \phi_i^2 \abs{a}^2 +\abs{\nabla_{ref} a}^2 dv +C
    \end{eqnarray*}
    and
    \begin{eqnarray*}
      \int_M \phi_i^2 \abs{a}^2 \abs{\nabla_{ref} a}^2 dv &\leq& C\varepsilon \int_M \abs{\nabla_{ref} (\phi_i \nabla_{ref} a)}^2 + \phi_i^2 \abs{\nabla_{ref} a}^2 dv \\
      &\leq& C\varepsilon \int_M \phi_i^2 \abs{\nabla_{ref}^2 a}^2 dv +C.
    \end{eqnarray*}
In summary, for $t<t_1$, we have
\begin{equation*}
  \frac{d}{dt} \int_M \phi_i^2( \abs{a}^2 +\abs{\nabla_{ref}a}^2 ) dv \leq C.
\end{equation*}
Therefore, by choosing $t_0$ sufficiently small, we see that both (\ref{eqn:condition3}) and (\ref{eqn:condition4}) remain true for $t_1\leq t_0$. By our definition of $t_1$, this shows $t_1<T<t_0$ is not possible.

{\bf Step 2:} $t_1=T<t_0$ is not possible.

As pointed out before in Step 1, we now show thtat (\ref{eqn:condition3}) and (\ref{eqn:condition4}) together with (\ref{a1}) imply higher order estimates up to $T$, so that the solution can be extended beyond $T$.

For that purpose, we consider the evolution equation of $a$. Let $\varphi$ be a cut-off function in time. Precisely, $\varphi(t)\equiv 0$ for $t<t_1/4$ and $\varphi(t)\equiv 1$ for $t\in [t_1/4,t_1]$. Multiplying (\ref{a1}) with $\varphi^3$ and applying the $L^p$ estimate, we obtain for $p=4$,
\begin{eqnarray*}
  \norm{\varphi^3 a}_{W^{2,1}_p(M\times [0,t_1])}&\leq& C (\alpha-1) \norm{\varphi^3 \nabla^2_{ref} a}_{L^p(M\times [0,t_1])} + C\norm{\varphi^3\nabla_{ref}a\# a}_{L^p(M\times [0,t_1])} \\
  &&+ C\norm{\varphi^3 a\#a\#a}_{L^p(M\times [0,t_1])} +C.
\end{eqnarray*}
We denote $W^{2,1}_p$  by the  space of functions whose  space derivatives up to second order and first order time derivative belong to $L^p$.
 The $L^p$ norm of $\varphi^2 \partial_t \varphi a$ is bounded by (\ref{eqn:condition4}), which is why we assume $p=4$.

By choosing $\alpha-1$ sufficiently small and using Young's inequality, we have
\begin{equation*}
  \norm{\varphi^3 a}_{W^{2,1}_p(M\times [0,t_1])}\leq C \norm{ \varphi a}^{3}_{L^{3p}(M\times [0,t_1])} + C\norm{\varphi^{2}\nabla_{ref} a}_{L^{3p/2}(M\times [0,t_1])}^{3/2}+C.
\end{equation*}

Recall that $M$ is covered by $B_{r_0}(x_i)$ and $\int_{B_{r_0}(x_i)} \abs{a}^4 dv\leq C\varepsilon^2$. For simplicity, we write $B_i$ for $B_{r_0}(x_i)$.
An interpolation theorem of Nirenberg (Theorem 1 in \cite{Nirenberg}) implies that
\begin{equation*}
  \norm{\varphi a}_{L^{3p}(B_i)} \leq C \norm{\varphi^3 \nabla_{ref}^2 a}_{L^p(B_i)}^{1/3}\norm{a}_{L^4(B_i)}^{2/3} +C \norm{a}_{L^4(B_i)}.
\end{equation*}
This implies that
\begin{equation*}
  \int_{B_i} \abs{\varphi a }^{3p} dv \leq C \varepsilon^{p} \int_{B_i} \abs{\varphi^3 \nabla_{ref}^2 a}^p dv + C.
\end{equation*}
Hence,
\begin{eqnarray*}
  \int_o^{t_1}\int_M \abs{\varphi a}^{3p} dv &\leq&\int_0^{t_1}\sum_i \int_{B_{r_0}(x_i)} \abs{\varphi a}^{3p} dv \\
  &\leq& C\varepsilon^p \int_0^{t_1} \int_M \abs{\varphi^3 \nabla_{ref}^2 a}^p dv +C.
\end{eqnarray*}
That is
\begin{equation*}
  \norm{\varphi a}_{L^{3p}(M\times [0,t_1])}^3\leq C\varepsilon \norm{\nabla_{ref}^2 (\varphi^3 a)}_{L^p(M\times [0,t_1])} +C.
\end{equation*}
Similarly,
\begin{equation*}
  \norm{\varphi^{2}\nabla_{ref} a}_{L^{3p/2}(M\times [0,t_1])}^{3/2}\leq C\varepsilon \norm{\nabla_{ref}^2 (\varphi^3 a)}_{L^p(M\times [0,t_1])} +C.
\end{equation*}
The proof is the same, except that we use another interpolation inequality
\begin{equation*}
  \norm{\varphi^{2}\nabla_{ref} a}_{L^{3p/2}(B_i)}\leq C\norm{\varphi^3 \nabla^2_{ref} a}^{2/3}_{L^p(B_i)}\norm{a}^{1/3}_{L^4(B_i)}+C\norm{a}_{L^4(B_i)}.
\end{equation*}
By choosing $\varepsilon$ small, we obtain an $W^{2,1}_p$ bound on $a$ for $p=4$, which allows us to apply the estimates for linear parabolic system for higher order estimates. In fact, the parabolic Sobolev embedding theorem in \cite{LSU} implies that $\varphi^2 \partial_t \varphi a$ is in $L^p(M\times [0,t_1])$ for any $p>1$. We then repeat the above argument and use the parabolic Sobolev embedding again to see that $\nabla_{ref} a$ is H\"older continuous. The higher order estimates now follow from Schauder estimates and (\ref{a1}).

\end{proof}

\subsection{Convergence for $t_i\to \infty$}

We now complete the proof of Theorem \ref{thm:one} by considering $t_i\to \infty$. We first claim that we have some gauge transformations $\sigma_i$ such that the $\sigma_i^*(A(t_i))$ are uniformly bounded in any $C^k$ norm. To see this,
let $t_0$ be as in Theorem \ref{thm:globalexist} and set $s_i=t_i-t_0/2$. Consider the solution  $\tilde{A}(t)$ to the modified flow (\ref{eqn:modifiedflow})  with initial value $\tilde{A}(s_i)=A(s_i)$. The proof in Step 2 of Theorem \ref{thm:globalexist} in fact established a $C^k$ estimate for $\tilde{A}(t_i)$, which is gauge equivalent to $A(t_i)$ by the discussion in Subsection \ref{sub:deturk}. Therefore, there is a subsequence which converges smoothly up to gauge transformations. By similar argument above, we have uniform a bound on $\nabla^k F(x,t)$ for any $k$. Due to (\ref{eqn:alphaflow}), we have a uniform bound for $\frac{\partial^k A}{\partial t^k}$ as well. Hence, there is $C>0$ independent of $t$ such that
\begin{equation*}
    \pfrac{}{s} \int_M (1+\abs{F}^2)^{\alpha-1} \abs{\pfrac{A}{s}}^2 dv\leq C.
\end{equation*}
Lemma \ref{lem:energyinequality} then implies that
\begin{equation*}
    \lim_{t\to\infty} \int_M \abs{\pfrac{A}{t}}^2dv =0.
\end{equation*}
Hence, the limit obtained above is a Yang-Mills $\alpha$-connection. This completes the proof of Theorem \ref{thm:one}.

\subsection{Stability of the modified flow}

The results in this subsection are prepared for later applications. Since we shall use the Yang-Mills $\alpha$-flow as a deformation in the space of connections,  we need to show that this flow depends at least continuously on its initial value in some chosen topology.

\begin{thm}
    \label{thm:stable}
    If $D_i=D_{ref}+A_i (i=1,2)$ are two initial connections satisfying
    \begin{equation*}
        \norm{A_i}_{C^{k,\beta}(M)}\leq K,
    \end{equation*}
    then by Theorem \ref{thm:globalexist}, there exists $t_0>0$, which now depends on $K$ and the solution $A_i(t)$ to the modified flow (\ref{eqn:modifiedflow}), which is defined on $[0,t_0]$ and satisfies $A_i(0)=A_i$ and
    \begin{equation*}
        \norm{A_i}_{C^{k,\beta}(M\times [0,t_0])}\leq C(K),
    \end{equation*}
    Moreover, for any $\varepsilon>0$, there exists $\delta(K)>0$ such that if
    \begin{equation*}
        \norm{A_1-A_2}_{C^{k,\beta}(M)}\leq \delta,
    \end{equation*}
    then
    \begin{equation*}
        \norm{A_1(t)-A_2(t)}_{C^{k,\beta}(M)}\leq \varepsilon,
    \end{equation*}
    for $t\in [0,t_0]$.
\end{thm}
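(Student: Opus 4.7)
The plan is to combine the short-time machinery of Theorem \ref{thm:globalexist} with linear parabolic estimates applied to the difference of two solutions. The key observation is that $C^{k,\beta}$ initial bounds propagate to a common interval $[0,t_0]$ by the higher-order estimates established in Step 2 of the proof of Theorem \ref{thm:globalexist}, and the difference of two solutions satisfies a linear parabolic equation with uniformly bounded coefficients.

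First I would establish the uniform $C^{k,\beta}$ bound. Since $\|A_i\|_{C^{k,\beta}(M)}\leq K$, we certainly have a uniform bound on $YM_\alpha(D_i)$, so Theorem \ref{thm:globalexist} provides a common existence time $t_0=t_0(K)>0$ on which the modified flow (\ref{eqn:modifiedflow}) admits smooth solutions $A_i(t)$. The $L^p$ bootstrap carried out in Step 2 of Theorem \ref{thm:globalexist}, followed by parabolic Schauder estimates applied to the rewritten equation (\ref{a1}), then yields the bound $\|A_i\|_{C^{k,\beta}(M\times[0,t_0])}\leq C(K)$ after possibly shrinking $t_0$ (depending on $K$ to absorb the initial $C^{k,\beta}$ data rather than just the $W^{1,2\alpha}$ data used in Step 1).

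Next, setting $B(t)=A_1(t)-A_2(t)$ and subtracting the two copies of (\ref{a1}), I obtain a linear parabolic equation of the schematic form
\begin{equation*}
  \pfrac{B}{t} = \laplacian_{ref} B + \mathcal{P}_1(A_1,A_2,\nabla_{ref}A_1,\nabla_{ref}A_2)\cdot(B,\nabla_{ref}B) + (\alpha-1)\mathcal{P}_2 \cdot \nabla^2_{ref}B + \mathcal{R},
\end{equation*}
where $\mathcal{P}_1,\mathcal{P}_2$ depend smoothly on their arguments (obtained by writing the nonlinearities as one-parameter families in the segment between $A_1$ and $A_2$ and applying the mean-value theorem) and $\mathcal{R}$ collects remainder terms that are linear in $B$ and its first derivative with coefficients controlled by the $C^{k-1,\beta}$ bounds from the previous step. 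By the uniform bound of the preceding paragraph the coefficients of this equation lie in $C^{k-1,\beta}(M\times[0,t_0])$ with a bound depending only on $K$; provided $\alpha-1$ is sufficiently small the leading symbol remains uniformly elliptic. Parabolic Schauder estimates for linear systems then give
\begin{equation*}
  \|B\|_{C^{k,\beta}(M\times[0,t_0])}\leq C(K)\|B(0)\|_{C^{k,\beta}(M)},
\end{equation*}
from which the stability conclusion follows by choosing $\delta=\varepsilon/C(K)$.

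The main obstacle is the quasilinear correction term $(\alpha-1)\mathcal{P}_2\cdot\nabla^2_{ref}B$: after differencing, the $\alpha$-dependent nonlinearity produces a perturbation of the principal symbol which is controlled only through the smallness of $\alpha-1$ and the a priori $C^{k,\beta}$ bound on $\nabla^2_{ref}A_i$. Keeping this perturbation strictly subordinate to $\laplacian_{ref}B$ throughout $[0,t_0]$ is what forces $t_0$ and the final constant $C(K)$ to depend on the full $C^{k,\beta}$ norm of the initial data rather than merely on $YM_\alpha(D_i)$. All other ingredients are routine bookkeeping of the polynomial nonlinearities in (\ref{a1}) together with standard linear parabolic Schauder theory.
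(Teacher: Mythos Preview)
Your approach is correct and essentially the same as the paper's: obtain the uniform $C^{k,\beta}$ bound by repeating Step~2 of Theorem~\ref{thm:globalexist} without the temporal cut-off, then subtract the two flow equations to get a linear parabolic system for $B=A_1-A_2$ with coefficients controlled by $C(K)$, and conclude via linear parabolic theory.

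The only noteworthy difference is in how the final linear estimate is justified. You invoke ``parabolic Schauder estimates for linear systems'' directly to obtain $\|B\|_{C^{k,\beta}}\leq C(K)\|B(0)\|_{C^{k,\beta}}$. The paper is a bit more explicit here: since we are dealing with a \emph{system}, no maximum principle is available, and the standard interior Schauder estimate carries a $\|B\|_{C^0}$ term on the right-hand side. The paper therefore first appeals to Eidel'man's construction of the fundamental solution for systems parabolic in the sense of Petrovskii to write $B(x,t)=\int_M B(y,0)\,\mathcal{Z}(x,t;y,0)\,dv$, from which the $C^0$ bound $\|B\|_{C^0(M\times[0,t_0])}\leq C(K)\|B(0)\|_{C^0(M)}$ follows; only then does it apply Schauder to bootstrap to $C^{k,\beta}$. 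Your version hides this step inside the phrase ``Schauder estimates for linear systems'', which is acceptable but worth making explicit since the $C^0$ control is not automatic for systems.
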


\begin{proof}
    The proof of the first part is essentially contained in the proof of Theorem \ref{thm:globalexist}. At that time, we didn't have good control over the initial value, hence a cut-off function in time was used to produce higher order estimates on $M\times [t_0/2,t_0]$. For our purposes here, it suffices to remove the cut-off function $\varphi$ in Step 2 of the proof there.

    The proof of the second part follows from theory of linear partial differential equations and is perhaps well known.
    Both $A_1$ and $A_2$ satisfy the modified Yang-Mills flow, which for our purposes here is written as
        \begin{equation*}
            \pfrac{A_i}{t}=\triangle A_i + (\alpha-1)P(A_i,\nabla A_i)\# \nabla^2 A_i +Q(A_i,\nabla A_i).
        \end{equation*}
    The exact form of $P$ and $Q$ is not important for us. It suffices to know that $P$ and $Q$ are smooth functions of $A_i$ and $\nabla A_i$.
    Subtracting the two equations, we have
        \begin{eqnarray*}
            \pfrac{A_1-A_2}{t}&=& \triangle (A_1-A_2) +(\alpha-1)P(A_1,\nabla A_1)\# \nabla^2 (A_1-A_2) \\
            && + (P(A_1,\nabla A_1)-P(A_2,\nabla A_2))\# \nabla^2 A_2 + Q(A_1,\nabla A_1)-Q(A_2,\nabla A_2).
        \end{eqnarray*}
    There are smooth functions $R$ and $S$ of $A_i$ and $\nabla A_i$ such that
        \begin{eqnarray*}
            \pfrac{A_1-A_2}{t}&=& \triangle (A_1-A_2) +(\alpha-1)P(A_1,\nabla A_1)\# \nabla^2 (A_1-A_2) \\
            && + R(A_i,\nabla A_i,\nabla^2 A_2) (A_1-A_2) + S(A_i,\nabla A_i,\nabla^2 A_2) (\nabla A_1-\nabla A_2).
        \end{eqnarray*}
    If we take the above as a linear parabolic system of $A_1-A_2$, then (1) the system is strictly parabolic in the sense of Petrovskii (note that $P$ is always bounded and hence the principle part is a small perturbation of the Laplacian) and (2) the coefficients are bounded in the $C^{k-2,\alpha}$ norm.

    For the  strictly parabolic linear systems  in the sense of Petrovskii, Eidel'man \cite {Ed} constructed the heat kernel explicitly. Moreover, the solution to the linear system is expressed as the convolution
    \begin{equation*}
        (A_1-A_2)(x,t)=\int_M (A_1-A_2)(y,0) \mathcal Z(x,t;y,0) dv.
    \end{equation*}
    Therefore
        \begin{equation*}
            \norm{A_1-A_2}_{C^0(M\times [0,t_0])}\leq C(K) \norm{A_1(\cdot,0)-A_2(\cdot,0)}_{C^0(M)}.
        \end{equation*}
    We can now apply the Schauder estimate to see
        \begin{eqnarray*}
            && \norm{A_1(\cdot,t)-A_2(\cdot,t)}_{C^{k,\beta}(M)} \\
            &\leq& \norm{A_1-A_2}_{C^{k,\beta}(M\times [0,t_0])} \\
            &\leq& C(K)\norm{A_1(\cdot,0)-A_2(\cdot,0)}_{C^{k,\beta}(M)}.
        \end{eqnarray*}
This proves our claim.
\end{proof}

\section{Convergence of $\alpha$-flow solutions}
\label{sec:convergence}
In this section, we study the convergence of the $\alpha$-flow solutions as $\alpha$ goes to $1$. We follow the same idea as in \cite{HY}. The key ingredients in the proof are a Bochner formula and a monotonicity formula, which are well known techniques but should still be computed for our new equation.

We start with the Bochner formula.
\subsection{Bochner formula and the uniform bound of $F$.}

Let $A(t)$ be a solution of the Yang-Mills alpha flow; i.e.
\begin{equation}\label{eqn:flowa}
  \pfrac{A}{t}=-D^*F+2(\alpha-1)\frac{*( \left <\nabla F,F\right >\wedge *
  F)}{1+\abs{F}^2},
\end{equation}
where $D=D_{ref}+A$. We recall that the curvature $F$ of $D$
satisfies
\begin{equation}\label{eqn:flowF}
    \frac{\partial F}{\partial t}
    =-DD^*F+2(\alpha-1)D\frac{*( \left <\nabla F,F \right >\wedge *F)}{1+\abs{F}^2}.
\end{equation}

For each point $p\in M$, let $e^i$ be a normal frame of $TM$ and
 $\omega^i$ the corresponding orthonormal basis of the cotangent bundle
$T^*M$. Then at $p\in M$,
\begin{equation*}
    F= \sum_{i<j}F_{ij} \omega^i \wedge \omega^j .
\end{equation*}
At $p\in M$,  we can assume that $\nabla e^i=0$ and $\nabla
\omega^i=0$.

In order to derive a Bochner type formula, we need

\begin{lem}  \label{Lemma 5} Let
\[\varphi :=\left <\nabla F,F\right >=\varphi_k \omega^k.\]
Then  at $p\in M$, we have
\[* (\varphi\wedge * F)=\sum_{i=1}^4 \sum_{j=1}^4  \varphi_j F_{ij} \omega^i.\]
\end{lem}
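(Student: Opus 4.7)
The plan is a pointwise algebraic computation in the given orthonormal frame, carried out as follows.

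First I would expand every object in sight at the point $p$: write $F = \frac{1}{2}\sum_{i,j}F_{ij}\omega^i\wedge\omega^j$ with $F_{ij}=-F_{ji}$, and $\varphi = \sum_k \varphi_k \omega^k$. Since the metric is Euclidean at $p$ with the chosen orientation, the Hodge star acts on basis forms by $*(\omega^i\wedge\omega^j) = \tfrac{1}{2}\epsilon_{ijkl}\omega^k\wedge\omega^l$ and $*(\omega^a\wedge\omega^b\wedge\omega^c) = \epsilon_{abcd}\omega^d$, where $\epsilon$ is the Levi--Civita symbol. Substituting these into $\varphi\wedge *F$ and applying $*$ once more gives
\begin{equation*}
  *(\varphi\wedge *F) = \tfrac{1}{4}\sum_{m,i,j,k,l,d}\varphi_m F_{ij}\,\epsilon_{ijkl}\,\epsilon_{mkld}\,\omega^d.
\end{equation*}

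The second step is to contract the two $\epsilon$'s. Rearranging indices (only even permutations are needed, since $\epsilon_{ijkl}=\epsilon_{klij}$ and $\epsilon_{mkld}=\epsilon_{klmd}$ in four variables), I use the standard identity
\begin{equation*}
  \sum_{k,l}\epsilon_{klij}\,\epsilon_{klmd} = 2(\delta_{im}\delta_{jd}-\delta_{id}\delta_{jm}).
\end{equation*}
Plugging this back in and contracting with $\varphi_m F_{ij}$, the two terms become $\varphi_i F_{id}$ and $-\varphi_j F_{dj}$; by the antisymmetry $F_{dj}=-F_{jd}$ they add to $2\sum_{i}\varphi_i F_{id}$, and the prefactor $\tfrac{1}{4}\cdot 2$ cancels the $2$. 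Relabeling indices then yields $*(\varphi\wedge *F) = \sum_{i,j}\varphi_j F_{ij}\omega^i$, which is precisely the claim.

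A conceptually cleaner alternative, which I would probably present instead of the index slog, is to invoke the general identity $*(\alpha\wedge *\beta) = \iota_{\alpha^{\sharp}}\beta$ for a $1$-form $\alpha$ and a $2$-form $\beta$ on an oriented Riemannian $4$-manifold (with the sign convention consistent with the one used elsewhere in the paper). Applied to $\alpha=\varphi$ and $\beta=F$ at $p$, the right-hand side is just $\iota_{\varphi^{\sharp}}F = \sum_{i,j}\varphi_j F_{ji}\omega^i$ up to sign. The only genuinely nontrivial point in either approach is making sure the sign convention for the Hodge star agrees with the one tacitly used in the flow equation (\ref{eqn:flowa}); this is the step I expect to require the most care, but no real obstacle beyond bookkeeping.
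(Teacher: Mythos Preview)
Your approach is correct and takes a genuinely different route from the paper. The paper proceeds by brute force: it writes $*F$ out as six explicit terms, forms the twelve nonzero monomials of $\varphi\wedge *F$, applies $*$ to each one, and regroups by $\omega^i$. Your Levi--Civita computation with the contraction identity $\sum_{k,l}\epsilon_{klij}\epsilon_{klmd}=2(\delta_{im}\delta_{jd}-\delta_{id}\delta_{jm})$ is more systematic and generalizes at once to any dimension; invoking $*(\alpha\wedge *\beta)=\pm\,\iota_{\alpha^\sharp}\beta$ is cleaner still. The price is exactly the one you name: the brute-force expansion fixes the sign unambiguously, whereas your argument leaves it hostage to the Hodge-star convention. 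Concretely, with the formula $*(\omega^a\wedge\omega^b\wedge\omega^c)=\epsilon_{abcd}\,\omega^d$ your last relabeling actually produces $\sum_{i,j}\varphi_j F_{ji}\,\omega^i$, which is the negative of the stated result; the paper's convention on $3$-forms (visible from $*(\omega^2\wedge\omega^3\wedge\omega^4)=\omega^1$ in the explicit computation) is $*(\omega^a\wedge\omega^b\wedge\omega^c)=-\epsilon_{abcd}\,\omega^d$, and with that the sign comes out right. So your caveat about bookkeeping is well placed---once one knows the interior-product identity, that sign is essentially the entire content of the lemma.
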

\begin{proof} At $p\in M$, we have
\begin{eqnarray*}
    F&=& F_{12} \omega^1\wedge \omega^2 +F_{13} \omega^1\wedge \omega^3 +F_{14} \omega^1\wedge \omega^4\\
    &+& F_{23} \omega^2\wedge \omega^3 +F_{24} \omega^2\wedge \omega^4 +F_{34} \omega^3\wedge \omega^4.
\end{eqnarray*}
Applying the Hodge star  operator $*$, we have
\begin{eqnarray*}
    *F&=& F_{12} \omega^3\wedge \omega^4 -F_{13} \omega^2\wedge \omega^4 +F_{14} \omega^2\wedge \omega^3\\
    &+&F_{23} \omega^1\wedge \omega^4 -F_{24} \omega^1\wedge \omega^3 +F_{34} \omega^1\wedge \omega^2.
\end{eqnarray*}
Hence
\begin{eqnarray*}
    \varphi\wedge *F&=&+\varphi_1 F_{12} \omega^1\wedge \omega^3\wedge \omega^4 -
    \varphi_1 F_{13} \omega^1\wedge \omega^2\wedge \omega^4  +\varphi_1 F_{14} \omega^1\wedge \omega^2\wedge \omega^3 \\
    &&  +\varphi_2 F_{12} \omega^2\wedge \omega^3\wedge \omega^4 -\varphi_2 F_{23} \omega^1\wedge \omega^2\wedge \omega^4
    +\varphi_2 F_{24} \omega^1\wedge \omega^2\wedge \omega^3 \\
    && + \varphi_3 F_{13} \omega^2\wedge \omega^3\wedge \omega^4 -\varphi_3 F_{23} \omega^1\wedge \omega^3\wedge \omega^4
     + \varphi_3 F_{34} \omega^1\wedge \omega^2\wedge \omega^3 \\
    && +\varphi_4 F_{14} \omega^2\wedge \omega^3\wedge \omega^4 -\varphi_4 F_{24} \omega^1\wedge \omega^3\wedge \omega^4
    +\varphi_4 F_{34} \omega^1\wedge \omega^2\wedge \omega^4.
\end{eqnarray*}
Applying the Hodge star operator again, we have
\begin{eqnarray*}
    *(\varphi\wedge *F)&=&  (\varphi_2 F_{12}+ \varphi_3 F_{13} +\varphi_4 F_{14}) \omega^1 \\
    &+& (-\varphi_1 F_{12}+\varphi_3 F_{23} +\varphi_4 F_{24}) \omega^2 \\
    &+& (-\varphi_1 F_{13}-\varphi_2 F_{23}+\varphi_4 F_{34}) \omega^3 \\
    &+& (-\varphi_1F_{14}-\varphi_2 F_{24} -\varphi_3 F_{34}) \omega^4 \\
    &=& \sum_{i=1}^4 \sum_{j=1}^4  \varphi_j F_{ij} \omega^i.
\end{eqnarray*}
This proves our claim.
\end{proof}

\begin{lem} \label{lem:firstbochner} (Bochner type  formula 1)
 When $\alpha-1$ is sufficiently small, there is a constant $C$ such that
  \begin{eqnarray} \label{eqn:firstbochner}
  && \frac{\partial}{\partial t}\abs{F}^2 - \nabla_{e_i} \left( (\delta_{ij} +2(\alpha-1)\frac{
   \left <F_{lj},F_{li}\right >}{1+\abs{F}^2}) \nabla_{e_j} \abs{F}^2\right)+|\nabla F|^2 \\
   &&\leq C\abs{F}^2 (1+\abs{F}).\nonumber
  \end{eqnarray}
\end{lem}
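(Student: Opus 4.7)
The strategy is to compute $\partial_t|F|^2=2\langle F,\partial_t F\rangle$ using the evolution equation \eqref{eqn:flowF}, apply the standard Weitzenb\"ock--Bochner identity to the term coming from $-DD^*F$, and then unpack the new $\alpha$-term using Lemma~\ref{Lemma 5} so as to recognize the divergence structure on the left-hand side of \eqref{eqn:firstbochner} up to an error of order $(\alpha-1)|\nabla F|^2$, which is absorbable when $\alpha-1$ is small.

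First, differentiating gives
\[
\partial_t|F|^2 \;=\; -2\langle F,DD^*F\rangle+4(\alpha-1)\langle F,D\xi\rangle,\qquad \xi:=\frac{*(\langle\nabla F,F\rangle\wedge *F)}{1+|F|^2}.
\]
For the first term, I would use Bianchi ($DF=0$, hence $D^*DF=0$) and the Weitzenb\"ock formula $DD^*F=\nabla^*\nabla F+\mathcal R(F)$, where $\mathcal R$ collects the Riemann curvature of $M$ acting on $F$ together with the algebraic bundle-curvature term $[F,F]$. A pointwise pairing with $F$ gives
\[
-2\langle F,DD^*F\rangle \;=\; \Delta|F|^2-2|\nabla F|^2-2\langle F,\mathcal R(F)\rangle,
\]
and since $|\mathcal R(F)|\le C|F|+C|F|^2$, the last term is bounded by $C|F|^2(1+|F|)$, which is exactly the shape of the right-hand side of \eqref{eqn:firstbochner}.

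The heart of the proof is the treatment of $4(\alpha-1)\langle F,D\xi\rangle$. Using Lemma~\ref{Lemma 5} and $\langle\nabla_j F,F\rangle=\tfrac12\nabla_j|F|^2$, I write $\xi_i=\frac{F_{ij}\nabla_j|F|^2}{2(1+|F|^2)}$, then compute
\[
\langle F,D\xi\rangle \;=\; \sum_{i,j}\langle F_{ij},\nabla_i\xi_j\rangle
\]
by the Leibniz rule. This produces three classes of terms: one with $\nabla^2|F|^2$, one with $(\nabla|F|^2)\otimes(\nabla|F|^2)$, and one with $\nabla F$ contracted against $F$ and $\nabla|F|^2$. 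Using the skew-symmetry $F_{ij}=-F_{ji}$, the contraction $\sum_j\langle F_{ij},F_{jl}\rangle$ is recognized as a multiple of $M_{il}:=\sum_l\langle F_{lj},F_{li}\rangle$, and the first two classes of terms match the expansion of $\nabla_i\bigl(\tfrac{2(\alpha-1)M_{ij}}{1+|F|^2}\nabla_j|F|^2\bigr)$ exactly. After this cancellation, the remaining error reduces, via the identity $\nabla_iM_{ij}=\sum_k\langle \nabla_iF_{kj},F_{ki}\rangle+\sum_k\langle F_{kj},\nabla_iF_{ki}\rangle$ and the relation $\sum_i\nabla_iF_{ki}=\pm(D^*F)_k$, to the single expression
\[
\pm\,\frac{2(\alpha-1)(\nabla_j|F|^2)\sum_k\langle F_{kj},(D^*F)_k\rangle}{1+|F|^2}.
\]

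Finally, by Cauchy--Schwarz and $|\nabla|F|^2|\le 2|F||\nabla F|$, this error is bounded by
\[
C(\alpha-1)\frac{|F|^2|\nabla F|^2}{1+|F|^2}\le C(\alpha-1)|\nabla F|^2.
\]
Choosing $\alpha-1$ sufficiently small so that $C(\alpha-1)\le\tfrac12$, this is absorbed into the $|\nabla F|^2$ term produced by the Weitzenb\"ock identity, leaving on the right-hand side only the curvature contribution $\le C|F|^2(1+|F|)$, which establishes \eqref{eqn:firstbochner}. The main technical obstacle is the tensorial bookkeeping of Step~3--4: carefully tracking the skew-symmetry conventions and the Bianchi identity to see that the non-divergence residue is genuinely a $D^*F$-type term rather than something uncontrollable like a second derivative of $F$.
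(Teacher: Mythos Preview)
Your strategy is essentially the same as the paper's: apply the Weitzenb\"ock formula to $-2\langle F,DD^*F\rangle$, then compare $4(\alpha-1)\langle F,D\xi\rangle$ against the divergence $\nabla_{e_i}(b_{ij}\nabla_{e_j}|F|^2)$ using Lemma~\ref{Lemma 5}, showing their difference is $O((\alpha-1)|\nabla F|^2)$ and hence absorbable. The paper organizes this by computing the leading ($\nabla_k\varphi_j$) term of each expression separately and showing they coincide, then bounding all remaining pieces crudely.

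The one place where you are more ambitious than the paper is the assertion that, after the $\nabla^2|F|^2$ and $(\nabla|F|^2)^{\otimes 2}$ pieces cancel, the remainder collapses to a \emph{single} $D^*F$-type contraction. This is sharper than what the paper claims and sharper than what is actually true. When you differentiate $M_{ij}=\sum_k\langle F_{kj},F_{ki}\rangle$, only one of the two summands produces $\sum_k\langle F_{kj},(D^*F)_k\rangle$; the other summand $\sum_{k}\langle \nabla_i F_{kj},F_{ki}\rangle$ is a generic $F\#\nabla F$ contraction, and there is also the companion term $\varphi_j\langle F_{kl},\nabla_kF_{lj}\rangle$ coming from the expansion of $\langle F,D\xi\rangle$ itself (your ``third class''), which is likewise not a $D^*F$ term. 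The paper does not try to identify these residues precisely; it simply records them schematically as
\[
(\alpha-1)\,\frac{F\#F\#\nabla F\#\nabla F}{1+|F|^2}
\quad\text{and}\quad
(\alpha-1)\,\frac{F\#F\#\langle F,\nabla F\rangle^2}{(1+|F|^2)^2},
\]
each of which is bounded by $C(\alpha-1)|\nabla F|^2$ since $|F|^2/(1+|F|^2)\le 1$. Your final Cauchy--Schwarz estimate and absorption step are therefore correct regardless; just do not overclaim the algebraic structure of the residue, since (as you yourself flag at the end) that tensorial bookkeeping is exactly where the argument is delicate.
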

\begin{proof}
    Recall that we use a local normal orthonomal frame $\set{e_i}$ and its dual $\set{\omega_i}$ at $p$. Noticing the fact that $\nabla_{e^j} e^j=0$ at $p\in M$, we have
\begin{equation*}
    \nabla^*\nabla |F|^2= -\sum_j \nabla^2_{e^j,e^j} |F|^2=-\sum_j \nabla_{e^j} \nabla_{e^j}
    |F|^2
\end{equation*}
and
\begin{eqnarray*}
   \sum_{i} \nabla^2_{e^i;e^i} \left <F,F \right >= 2\sum_{i}\left <\nabla_{e_i}F,\nabla_{e_i} F \right >
   + 2\sum_{i}\left < F,\nabla_{e_i; e_i} F \right >.
\end{eqnarray*}
The well-known Weizenb\"ock
formula is
\begin{eqnarray*}
  \triangle F
  = \nabla^* \nabla F + F\circ (Ric\wedge g+2R)+ F\# F.
\end{eqnarray*}
Here $Ric$ is the Ricci curvature of $M$ and $R$ is the curvature operator, $(Ric\wedge g+2R)$ is a linear mapping from 2 forms to 2 forms. We refer to Theorem (3.10) of \cite{BL} for the exact statement and the proof. Since we are not interested in the exact form of the last term and it is quadratic in $F$, we denote it by $F\# F$.

Using Bianchi's identity $DF=0$,
we have
\begin{eqnarray*}
    -\left <DD^* F,F\right >=\left <\nabla_{e^i} \nabla_{e^i} F + F\#F - F\circ (Ric\wedge g+2R), F\right>.
\end{eqnarray*}

  For
simplicity, we set
\begin{equation*}
    b_{ij}=2(\alpha-1)\frac{\left <F_{lj},  F_{li}\right >}{1+\abs{F}^2}.
\end{equation*}
Then we have
\begin{eqnarray}\label{eqn:c1}
  &&\pfrac{}{t} \abs{F}^2 - \nabla_{e^i} \left( (\delta_{ij}+ b_{ij})
    \nabla_{e^j} \abs{F}^2 \right)\\
    &=& 2\left <F,\pfrac{}{t}F\right > - 2\nabla_{e^i} \left <\nabla_{e^i} F, F \right >
    -\nabla_{e^i} \left( b_{ij} \nabla_{e^j}\abs{F}^2 \right) \nonumber\\ \nonumber
    &=& 2\left <F,\pfrac{F}{t}+DD^* F\right > + \left <F, F \#F - F\circ (Ric\wedge g+ 2R)\right > \\
    &&
    -2 \abs{\nabla F}^2-\nabla_{e^i} \left( b_{ij} \nabla_{e^j} \abs{F}^2 \right).
    \nonumber
\end{eqnarray}

By Lemma \ref{Lemma 5}, we have
\begin{equation*}
  * (\varphi\wedge *F)=\sum \varphi_i F_{ij} \omega^j .
\end{equation*}
Let $f(|a|)$  denote a function , whose absolute value is smaller
than a constant multiple of $|a|$; i.e. $|f(a)|\leq C|a|$ for a
constant $C>0$. Then at $p$, we have
\begin{eqnarray*}
    D\frac{*(\varphi \wedge *F)}{1+\abs{F}^2} &=& \frac{D( *(\varphi\wedge *F))}{1+\abs{F}^2}
     + d (1+|F|^2)^{-1}\wedge *(\varphi\wedge *F) \\
    &{=}& \frac{(\varphi_i F_{ij})_{;k} \omega^k\wedge \omega^j}{ 1+\abs{F}^2}
    +f(\abs{\nabla F}^2 \frac{1}{\abs{F}}) \\
    &{=}&
     \frac{\varphi_{i;k} F_{ij}\omega^k\wedge \omega^j}{ 1+\abs{F}^2} +f(\abs{\nabla F}^2
     \frac{1}{\abs{F}})\\
\end{eqnarray*}
which implies
\begin{eqnarray*}
\left <D\frac{*(\varphi\wedge *F)}{1+\abs{F}^2},\,F\right >
     &=&
    \frac{ \varphi_{i;k}\left <F_{ij}, F_{kj}\right >}{1+\abs{F}^2} +f(\abs{\nabla F}^2).
\end{eqnarray*}

On the other hand, we have at $p$
\begin{eqnarray}\label{eqn:c2}
     &&\nabla_{e^i} \left( b_{ij} \nabla_{e^j} \abs{F}^2 \right)\\ \nonumber
     &=&
     \nabla_i \left( 4(\alpha-1)\frac{\left <F_{lj},\,F_{li}\right >}{1+\abs{F}^2} \varphi_j\right)
     \\ \nonumber
     &=&
     4(\alpha-1)\frac{\left <F_{lj},\, F_{li}\right >}{1+\abs{F}^2} \varphi_{j;i} +(\alpha -1)
     \frac {F\#F\#\nabla F\#\nabla F}{1+|F|^2}\\
     &&+(\alpha -1)
     \frac {F\#F\#\left <F,\nabla F\right >^2}{(1+|F|^2)^2}\nonumber\\
     &\geq &  4(\alpha-1)\left <D\frac{*(\varphi\wedge *F)}{1+\abs{F}^2},\,F\right >- C (\alpha -1)\abs{\nabla F}^2.\nonumber
  \end{eqnarray}
  Since $p$ is an arbitrary point of $M$, we may combine
  (\ref{eqn:flowF}), (\ref{eqn:c1}) and (\ref{eqn:c2}) to get (when $\alpha-1$ small),
  \begin{eqnarray} \label{eqn:betterbochner}
      && \frac{\partial}{\partial t} \abs{F}^2-\nabla_{e_i} \left( (\delta_{ij}+b_{ij})\nabla_{e_j}\abs{F}^2 \right)+\abs{\nabla F}^2 \\ \nonumber
      &\leq& C \abs{F}^3 -\left< F, F\circ(Ric\wedge g+2R)\right>.
  \end{eqnarray}
  Since the manifold is compact and the curvatures are bounded, the lemma follows trivially from (\ref{eqn:betterbochner}). We shall use this shaper estimate later to prove a gap theorem for Yang-Mills $\alpha$-connections on $S^4$.
\end{proof}

As a consequence of Lemma \ref{lem:firstbochner} , we have
\begin{lem}(Bochner type  formula 2) \label{lem:secondbochner} For each $\alpha >1$, let $A$ be the smooth solution of
the Yang-Mills $\alpha$-flow and $F:=F_A$ the curvature of $A$.
Then
  for $\alpha-1$ sufficiently small, we have
  \begin{eqnarray}\label{eqn:secondbochner}
    &&
   \frac{\partial}{\partial t} (1+|F|^2)^{\alpha} - \nabla_{e_i} \left( (\delta_{ij} +2(\alpha-1)\frac{
   \left <F_{lj},F_{li}\right >}{1+\abs{F}^2}) \nabla_{e_j}  (1+|F|^2)^{\alpha}\right)\\ \nonumber
   && \leq
C (1+|F|^2)^{\alpha} (1+\abs{F})
\end{eqnarray}
  for a constant $C>0$.
\end{lem}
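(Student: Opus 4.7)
The plan is to reduce the lemma to Lemma \ref{lem:firstbochner} by means of the chain rule applied to the function $f(u)=u^{\alpha}$ with $u:=1+\abs{F}^2$. First I would compute
\[
  \pfrac{}{t}(1+\abs{F}^2)^{\alpha}=\alpha (1+\abs{F}^2)^{\alpha-1}\pfrac{}{t}\abs{F}^2,
\]
and, using the product rule,
\[
  \nabla_{e_j}(1+\abs{F}^2)^{\alpha}=\alpha(1+\abs{F}^2)^{\alpha-1}\nabla_{e_j}\abs{F}^2,
\]
\[
  \nabla_{e_i}\bigl((\delta_{ij}+b_{ij})\nabla_{e_j}(1+\abs{F}^2)^{\alpha}\bigr)
  =\alpha(1+\abs{F}^2)^{\alpha-1}\nabla_{e_i}\bigl((\delta_{ij}+b_{ij})\nabla_{e_j}\abs{F}^2\bigr)+E,
\]
where $b_{ij}=2(\alpha-1)\langle F_{lj},F_{li}\rangle/(1+\abs{F}^2)$ and the extra term from differentiating the factor $(1+\abs{F}^2)^{\alpha-1}$ is
\[
  E=\alpha(\alpha-1)(1+\abs{F}^2)^{\alpha-2}(\delta_{ij}+b_{ij})\nabla_{e_i}\abs{F}^2\,\nabla_{e_j}\abs{F}^2.
\]

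The key observation is that $E\geq 0$ for $\alpha-1$ sufficiently small. Indeed, since $\abs{b_{ij}}\leq C(\alpha-1)$ uniformly in $F$, the symmetric matrix $\delta_{ij}+b_{ij}$ is positive definite (in fact, a small perturbation of the identity) once $\alpha-1$ is small enough; hence the quadratic form appearing in $E$ is nonnegative. Combining the three identities above yields the pointwise equality
\begin{align*}
  &\pfrac{}{t}(1+\abs{F}^2)^{\alpha}-\nabla_{e_i}\bigl((\delta_{ij}+b_{ij})\nabla_{e_j}(1+\abs{F}^2)^{\alpha}\bigr)\\
  &\quad=\alpha(1+\abs{F}^2)^{\alpha-1}\Bigl[\pfrac{}{t}\abs{F}^2-\nabla_{e_i}\bigl((\delta_{ij}+b_{ij})\nabla_{e_j}\abs{F}^2\bigr)\Bigr]-E.
\end{align*}

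Next I would plug in Lemma \ref{lem:firstbochner}, which bounds the bracketed expression by $-\abs{\nabla F}^2+C\abs{F}^2(1+\abs{F})$. Discarding the favorable term $-\alpha(1+\abs{F}^2)^{\alpha-1}\abs{\nabla F}^2$ and the favorable term $-E$, both of which are nonpositive, I obtain
\[
  \pfrac{}{t}(1+\abs{F}^2)^{\alpha}-\nabla_{e_i}\bigl((\delta_{ij}+b_{ij})\nabla_{e_j}(1+\abs{F}^2)^{\alpha}\bigr)\leq C\alpha(1+\abs{F}^2)^{\alpha-1}\abs{F}^2(1+\abs{F}).
\]
Finally I would use the trivial bound $\abs{F}^2\leq 1+\abs{F}^2$ to absorb the factor $\abs{F}^2$ into $1+\abs{F}^2$, producing the claimed right-hand side $C(1+\abs{F}^2)^{\alpha}(1+\abs{F})$.

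There is no serious obstacle here; the only point requiring attention is verifying the sign of the matrix $\delta_{ij}+b_{ij}$ so that $E\geq 0$ and can be discarded. Once this is noted, the proof is a direct chain-rule reduction to Lemma \ref{lem:firstbochner}.
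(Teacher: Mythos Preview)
Your proposal is correct and follows essentially the same route as the paper: a chain-rule reduction to Lemma~\ref{lem:firstbochner}, with the extra term $E$ discarded because the matrix $\delta_{ij}+b_{ij}$ is positive definite for $\alpha-1$ small. The paper's proof is terser about the sign of $E$, but the argument is identical.
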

\begin{proof} In fact, one sees
\[\pfrac{}{t}(1+|F|^2)^{\alpha}= \alpha (1+|F|^2)^{\alpha -1}\pfrac{\abs{F}^2}{t}\]
and
\[ \nabla_{e_j}(1+|F|^2)^{\alpha} = \alpha (1+|F|^2)^{\alpha -1}\nabla_{e_j} |F|^2 .\]
For simplicity, we set
\[ a_{ij}= \delta_{ij} +2(\alpha-1)\frac{
   \left <{F}_{lj},{F}_{li}\right >}{1+\abs{F}^2} .\]
   Then we have
\begin{eqnarray*}
    && \nabla_{e_i} \left ( a_{ij} \nabla_{e_j} (1+|F|^2)^{\alpha} \right )\\
 &=&\alpha \nabla_{e_i} (  a_{ij} (1+|F|^2)^{\alpha -1}\nabla_{e_j} |F|^2   ) \\
    &=& \alpha (1+|F_{\alpha}|^2)^{\alpha -1} \nabla_{e_i} (a_{ij} \nabla_{e_j} F|^2  )\\
    &&+\alpha (\alpha -1) (1+|F|^2)^{\alpha -2}  a_{ij} \nabla_{e_i} |F|^2\nabla_{e_j}|F|^2.
\end{eqnarray*}
By Lemma  \ref{lem:firstbochner}, we obtain
\begin{eqnarray*}
    && \frac{\partial}{\partial t} (1+|F|^2)^{\alpha} - \nabla_{e_i} \left ( a_{ij} \nabla_{e_j}  (1+|F|^2)^{\alpha} \right )\\
    &&= \alpha (1+|F|^2)^{\alpha -1} \left [\frac{\partial}{\partial t} |F|^2  - \nabla_{e_i} (a_{ij} \nabla_{e_j}  |F|^2  )\right ] \\
    &&-\alpha (\alpha -1) (1+|F|^2)^{\alpha -2}  a_{ij} \nabla_{e_i}
    |F|^2\nabla_{e_j}|F|^2.\\
    &&\leq C (1+|F|^2)^{\alpha -1} |F|^2
    (1+|F|).
\end{eqnarray*}
This proves our claim.
\end{proof}

\subsection{Monotonicity formula}

The global parabolic monotonicity formula for  harmonic maps was
first established by Struwe in \cite {St1},  and for the
Yang-Mills flow in \cite {CS} and \cite {Ha}. Next, we will
derive a local parabolic type of monotonicity for the Yang-Mills
$\alpha$-flow as similar to one in \cite {HT1}.

Let $i(M)$ be the injectivity radius of $M$. Consider some fixed $x_0\in M$ and let $\phi$ be a cut-off function supported in $B_{i(M)}(x_0)$ with $\phi\equiv 1$ on $B_{i(M)/2}(x_0)$.
 For
$z_0=(x_0,t_0)\in M\times \R_+$, we write
$$T_R(z_0)=\left \{ z=(x,t) : t_0-4R^2<t<t_0-R^2, x\in M\right \}\,.$$
When there is no ambiguity for $z_0$, we write $T_R$ only.

If we take the normal coordinates $\{x^i\}$ in $B_{i(M)}(x_0)$, the Euclidean backward heat kernel to the (backward)
heat equation with singularity at $z_0$ is
$$G_{z_0}(z)=\frac 1{(4\pi (t_0-t))^{2}}\text{exp} \left (-\frac
{|x|^2}{4(t_0-t)}\right )\,, \quad t<t_0.$$
As before, we write $G(x,t)$ when $z_0$ is obvious.

Assume that  $A$ is a solution of the $\alpha$-flow (1.4) in
$M\times \R_+$.  For any $z_0=(x_0, t_0)\in M\times [0,T]$, we set
\begin{equation}\label{eqn:Phi}
\Phi_{\alpha} (R, A;  z_0)= R^{4\alpha -2} \int_{T_R(z_0)}  (1+|F(z)|^2)^{\alpha} \,\phi^2(x-x_0)\,G_{z_0}(z)\,dv\,dt.
\end{equation}

\begin{lem} \label{Lemma 3.2}  (Local Monotonicity)  Let $A$ be a
regular solution of the $\alpha$-flow (1.4). Then, for
$z_0=(x_0,t_0)\in M\times (0,\infty )$ and for any two numbers
$R_1$, $R_2$ with $0<R_1\leq R_2\leq i(M)$, we have
\begin{eqnarray*}
    \Phi_{\alpha} (R_1,A;  z_0) \leq  C\,  \mbox {exp}(C(R_2-R_1))
 \Phi_{\alpha} (R_2,A;  z_0) + C(R^2_2 -R^2_1)\mbox
{YM}_{\alpha}(A_0).
\end{eqnarray*}
\end{lem}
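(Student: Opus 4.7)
The plan is to mimic the localized monotonicity argument of \cite{HT1} (which adapts the global versions in \cite{St2,CS,Ha} for the classical Yang-Mills flow) and to carefully track the additional terms produced by the $\alpha$-correction. The target is the differential inequality
\begin{equation*}
\frac{d}{dR}\Phi_\alpha(R,A;z_0)\geq -C\,\Phi_\alpha(R,A;z_0)-CR\cdot YM_\alpha(A_0)
\end{equation*}
for $R\in(0,i(M)]$, from which Gr\"onwall's lemma on $[R_1,R_2]$ yields the stated bound.

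Working in normal coordinates centered at $x_0$ inside $B_{i(M)}(x_0)$, the cut-off $\phi$ localizes the analysis to this chart and $G_{z_0}$ may be treated as the Euclidean backward heat kernel, so that $(\partial_t+\Delta_{\mathrm{eucl}})G_{z_0}=0$ together with the standard pointwise bounds $|\nabla G_{z_0}|\leq(|x|/2(t_0-t))G_{z_0}$ and $|x|^2G_{z_0}/(t_0-t)\leq CG_{z_0}$ hold on $\mathrm{supp}(\phi)\cap T_R$. Abbreviating $u:=(1+|F|^2)^\alpha$ and $a_{ij}:=\delta_{ij}+2(\alpha-1)\langle F_{lj},F_{li}\rangle/(1+|F|^2)$, Lemma \ref{lem:secondbochner} yields the pointwise inequality
\begin{equation*}
\partial_t u-\nabla_{e_i}\bigl(a_{ij}\nabla_{e_j}u\bigr)\leq C u(1+|F|).
\end{equation*}

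I would then compute $d\Phi_\alpha/dR$ directly, using the reparametrization $\tau=(t_0-t)/R^2$ (which maps $T_R$ onto $M\times[1,4]$ independently of $R$) to clarify the role of the prefactor $R^{4\alpha-2}$. After substituting the Bochner inequality, integrating by parts twice in $x$, and using the backward heat kernel cancellation $u\phi^2(\partial_t+\Delta)G_{z_0}=0$, the derivative $d\Phi_\alpha/dR$ splits into four error families: (i) a term of size $\alpha-1$ coming from $(a_{ij}-\delta_{ij})\nabla_{e_j}u\cdot\nabla_{e_i}G_{z_0}$; (ii) terms involving $\nabla\phi$, supported in the annulus $\{i(M)/2\leq|x|\leq i(M)\}$ where $G_{z_0}$ is exponentially small in $1/R^2$; (iii) curvature corrections from the deviation of the Riemannian Laplacian and volume element from their Euclidean counterparts in normal coordinates; and (iv) the zeroth-order error $Cu(1+|F|)G_{z_0}\phi^2$ from the Bochner inequality.

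The main obstacle lies in (i) and (iv), both of which are novel compared to the classical Yang-Mills monotonicity. For (i), the smallness of $\alpha-1$ keeps $a_{ij}$ uniformly close to the identity, so the term can be absorbed into the positive divergence contribution produced by integration by parts, without deteriorating the constant. For (iv), the regularity of the solution guarantees $|F|\leq C$ pointwise, so $Cu(1+|F|)\leq Cu$; its integral against $G_{z_0}\phi^2$ over $T_R$ equals $R^{-(4\alpha-2)}\Phi_\alpha$ up to a constant, giving the $C\Phi_\alpha$ contribution. The error (ii) produces at most $CR\cdot YM_\alpha(A_0)$ by combining the Gaussian decay of $G_{z_0}$ outside a fixed ball with the global energy bound of Lemma \ref{lem:energyinequality}, and (iii) is $O(\Phi_\alpha)$ by standard normal-coordinate estimates. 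Collecting these estimates yields the targeted differential inequality, and Gr\"onwall on $[R_1,R_2]$ completes the proof.
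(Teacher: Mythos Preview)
Your approach diverges from the paper's in an essential way, and step (iv) contains a genuine gap.

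The paper does \emph{not} invoke the Bochner inequality here at all. After rescaling to $T_1$ and differentiating in $R$, it obtains terms containing $x^k\partial_{x^k}|F|^2$ and $2t\,\partial_t|F|^2$ against the weight $(1+|F|^2)^{\alpha-1}G\phi^2$. The Bianchi identity rewrites $4|F|^2+x^k\partial_{x^k}|F|^2$ as $\langle \nabla_{A,x^i}(x^kF_{kj})\,dx^i\wedge dx^j,F\rangle$ up to metric errors; one integration by parts against the full weight produces $\langle x^kF_{kj}\,dx^j,\,D^*((1+|F|^2)^{\alpha-1}F)\rangle$ together with a $\nabla_iG$ cross term. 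On the other hand $2t\,\partial_t|F|^2=4t\langle F,D\partial_tA\rangle$ is integrated by parts and the flow equation in the form $(1+|F|^2)^{\alpha-1}\partial_tA=-D^*((1+|F|^2)^{\alpha-1}F)$ is substituted. The two $D^*$ contributions and the $\nabla G$ cross terms then combine into the nonnegative quantity
\[
\tfrac12\alpha R^{4\alpha-3}\int_{T_R}|t|\,(1+|F|^2)^{\alpha-1}\Bigl|2\partial_tA-\tfrac{x^i}{|t|}g^{il}F_{lm}\,dx^m\Bigr|^2\phi^2 G\sqrt{g}\,dz,
\]
and the residual errors (from $\nabla\phi$, from $|g_{ij}-\delta_{ij}|\leq C|x|^2$, from $\partial\sqrt{g}$) are bounded by $C\Phi_\alpha+CR\,YM_\alpha(A_0)$. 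The perfect-square structure is what drives the sign; it comes from the flow equation and Bianchi, not from any differential inequality for $|F|^2$.

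Your handling of the Bochner error $Cu(1+|F|)$ is circular. You write that ``the regularity of the solution guarantees $|F|\leq C$ pointwise'', but the entire purpose of this lemma is to feed into the $\varepsilon$-regularity (Lemma~\ref{lem:eregularity}) and thereby \emph{produce} pointwise bounds on $|F|$ that are uniform in $\alpha$ as $\alpha\to1$. Any a~priori bound $|F|\leq C$ on a fixed smooth solution depends on that solution and on $\alpha$, so the constant in your differential inequality would depend on $\sup|F_{A_\alpha}|$ and be useless for Theorem~\ref{thm:two}. Even setting this aside, the Bochner inequality is one-sided and does not encode the Bianchi structure: after your two integrations by parts you are still left with the radial contribution $x^k\partial_{x^k}u$ (equivalently a $\nabla u\cdot\nabla G$ term) carrying no small coefficient, and your outline has no mechanism to absorb it into a sign-definite expression.
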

\begin{proof} Although the main idea of the proof is similar to one for the
Yang-Mills flow in \cite {HT1},  the proof becomes much more involved,
so we have to give more details here.

Since the computation is local, we choose normal coordinates $\{x^i\}$ around $x_0$ and assume without loss of generality that $t_0=0$.

In (\ref{eqn:Phi}), we set $x=R\tilde{x}$ and $t=R^2\tilde{t}$ to obtain
$$ \Phi_{\alpha} (R, A;  z_0)=\int_{T_1} R^{4\alpha }(1+|F|^2(x,t))^{\alpha} \, \phi^2 (R\tilde
x)\,G(\tilde z)\sqrt { g (R\tilde x)}\,d\tilde z\,,
  $$
where $d\tilde z=d\tilde x\,d\tilde t$.

Then we compute
\begin{eqnarray*}
    && \frac d{dR} \Phi_{\alpha} (R, A;  z_0) =\int_{T_1}\frac d{dR}\left [   R^{4\alpha } [1+| F|^2 (R\tilde
x, R^2\tilde t)]^{\alpha}  \,\phi^2 (R\tilde x)\, \sqrt {g
(R\tilde x)} \,\right ]  G(\tilde z) \,d\tilde z \\
    && =4\alpha R^{4\alpha -1}\int_{T_1}  [1+| F|^2 (R\tilde
x, R^2\tilde t)]^{\alpha}  \,\phi^2 (R\tilde x) \sqrt {g
(R\tilde x)}\,G(\tilde z)\,d\tilde z\\
    &&  + \alpha R^{4\alpha}\int_{T_1} [1+| F|^2 (R\tilde
x, R^2\tilde t)]^{\alpha -1} \tilde x^k\frac{\partial }{\partial
x^k} |F|^2 (R\tilde x, R^2\tilde t) \,\phi^2(R\tilde x)\, \sqrt
{g (R\tilde x)} G(\tilde
z)\,d\tilde z\\
    && + \alpha R^{4\alpha}\int_{T_1}[1+| F|^2 (R\tilde
x, R^2\tilde t)]^{\alpha -1}2R\tilde t\frac{\partial }{\partial
t}|F|^2 (R\tilde x, R^2\tilde t)\, \phi^2(R\tilde x)\, \sqrt {g
(R\tilde x)}\,G(\tilde
z)\,d\tilde z\\
    && +\int_{T_1} R^{4\alpha } [1+| F|^2 (R\tilde
x, R^2\tilde t)]^{\alpha}  \, \tilde x^k\frac{\partial }{\partial
x^k} \,(\phi^2  \sqrt {g}) (R\tilde x)\, G(\tilde z)
\,d\tilde z \\
    && := I_1+ I_2+I_3+I_4\,.
\end{eqnarray*}

In order to estimate  $I_1$ and $I_2$, we note that in local
coordinates, we have
$$F = \frac 1 2  F_{ij}dx^i\wedge dx^j.
$$
Let $\nabla_{A, x^k} F=\frac 12 \nabla_{A, x^k} F_{ij}
dx^i\wedge dx^j $ be the gauge-covariant derivative of $F$ with
respect to $\frac {\partial}{\partial x^k}$ satisfying $\nabla_{A,
x^k} F_{ij}=\frac{\partial F_{ij}}{\partial x^k}+[A_k,
F_{ij}]-\sum_s\Gamma^s_{ik} F_{sj}-\sum_s\Gamma^s_{jk} F_{is}$.
Since $A$ is compatible with the Riemannian structure, we have
$$\frac{\partial }{\partial x^k}|F|^2 =\frac 1 2
\left <\nabla_{A,x^k}F_{ij}dx^i\wedge dx^j, F_{lm}dx^l \wedge
dx^m\right >\,.$$ In local coordinates, the Bianchi identity
$DF=0$   is equivalent to
$$\nabla_{A, x^k} F_{ij}=\nabla_{A,
x^i}F_{kj}-\nabla_{A, x^j} F_{ki}.$$ Using the Bianchi identity,
we have
\begin{eqnarray*}
    && x^k\frac{\partial }{\partial x^k}|F|^2 =\frac 1 2
x^k\left <(\nabla_{A,x^i}F_{kj}-\nabla_{A,x^j}F_{ki})dx^i\wedge
dx^j,\,F_{lm}dx^l\wedge dx^m\right > \\
    &&=  \left <\nabla_{A,x^i}(x^kF_{kj}) dx^i\wedge dx^j,\,
F_{lm}dx^{l}\wedge
dx^{m}\right> -4|F|^2\\
&&\quad -\left <x^kF_{sj}\Gamma^s_{ki}dx^i\wedge dx^j,\,
F_{lm}dx^{l}\wedge dx^{m} \right >,
\end{eqnarray*}
where $\nabla_{A, x^i} (x^kF_{kj}):=\frac{\partial }{\partial
x^i}(x^kF_{kj})+[A_i, x^k F_{kj}]-x^kF_{ks}\Gamma_{ji}^s$ is the
gauge-covariant derivative of $x^kF_{kj}$ with respect to $\frac
{\partial}{\partial x^i}$. Changing back to $(x,t)$, we have
\begin{eqnarray*}
    && I_1+I_2=\alpha R^{4\alpha -3}\int_{T_R}(1+|F|^2)^{\alpha -1}[4(|F|^2+1)+  x^k\frac {\partial |F|^2}{\partial
x^k}  ]\,\phi^2 \,G\,\sqrt {g}\,dz\\
    &&=\alpha R^{4\alpha -3}\int_{T_R}(1+|F|^2)^{\alpha -1}[4 +\left <\nabla_{A,x^i}(x^kF_{kj}) dx^i\wedge dx^j,\,
F_{lm}dx^{l}\wedge dx^{m}\right> ]\,\phi^2 \,G\,\sqrt {g}\,dz
     \\
     &&\quad -\alpha R^{4\alpha -3}\int_{T_R}(1+|F|^2)^{\alpha -1}\left
<x^kF_{sj}\Gamma^s_{ki}dx^i\wedge dx^j,\, F_{lm}dx^{l}\wedge
dx^{m} \right >\,\phi^2 \,G\,\sqrt {g}\,dz.
\end{eqnarray*}
Note that
$$D^* [(1+|F|^2)^{\alpha -1}F]=-g^{il} \nabla_{A, x^i}[ (1+|F|^2)^{\alpha -1}F_{lm}]dx^{m}\,.$$
Then  using Stokes' formula, we have
\begin{eqnarray*}
    && \int_{T_{R}}(1+|F|^2)^{\alpha -1}\left <\nabla_{x^i}(x^kF_{kj}) dx^i\wedge dx^j,\,
F_{lm}dx^{l}\wedge dx^{m} \right >\,\phi^2\, G\,\sqrt g\,dz \\
    && =2\int_{T_{R}}(1+|F|^2)^{\alpha -1}\left <\nabla_{x^i}(x^kF_{kj})  dx^j,\,
g^{il}F_{lm}dx^{m}\right >\,\phi^2\, G\,\sqrt g\,dz\\
    && =2\int_{T_{R}}\left < x^kF_{kj}dx^j,\,D^* [(1+|F|^2)^{\alpha -1}F] \right >\,\phi^2 \,G\,\sqrt g\,dz \\
    && \quad -2\int_{T_{R}}(1+|F|^2)^{\alpha -1}\left <x^kF_{kj}dx^j,\,
g^{il}F_{lm}dx^{m}\right >\,\phi^2\,\frac
{\partial G}{\partial x^i}\sqrt g\,dz \\
    && -4\int_{T_{R}}(1+|F|^2)^{\alpha -1}\left <x^kF_{kj}dx^j,\,
g^{il}F_{lm}dx^{m}\right >\,\phi\,\frac {\partial \phi
}{\partial x^i}\,G\,\sqrt g\,dz\,.
\end{eqnarray*}

Using the fact that
$$|g_{ij}(x)-\delta_{ij}|\leq C|x|^2, \quad \left |\frac {\partial
g_{ij}}{\partial x^k}\right |\leq C |x|,\quad \frac {\partial
G}{\partial x^i}=\frac {x^i}{2t}G,$$ we have
\begin{eqnarray*}
     I_1+I_2&\geq & 2\alpha R^{4\alpha -3}\int_{T_{R}} \<x^kF_{kj}dx^j,
    D^*((1+|F|^2)^{\alpha -1}
F)\>\,\phi^2\,
G\,\sqrt g\,dz \\
&&  +\alpha R^{4\alpha -3}\int_{T_{R}}(1+|F|^2)^{\alpha -1} |x^ig^{ik} F_{kj}dx^j|^2\frac{1}{\abs{t}}  \,G\,\phi^2\,\sqrt g\,dz \\
    &&   -C\alpha R^{4\alpha -3}\int_{T_{R_1}} (1+|F|^2)^{\alpha}(|x|^2 \phi^2 +|x| |\nabla \phi
    | +\frac {|x|^4}{\abs{t}} \phi^2) \,\,G\,\sqrt g\,dz\,.
\end{eqnarray*}
To estimate $I_3$,we note that the $\alpha$-flow (1.4) is
equivalent to
\[ (1+\abs{F}^2)^{\alpha-1} \frac {\partial A}{\partial t}=-D^*\left(  (1+\abs{F}^2)^{\alpha-1} F \right).\]
Then using Stokes' formula, we have
\begin{eqnarray*}
    && I_3=2\alpha R^{4\alpha -3}\int_{T_{R}}(1+|F|^2)^{\alpha -1} \,t\frac {\partial}{\partial t}
|F|^2\,\phi^2\,G\,\sqrt g\, dz \\
    && =4\alpha R^{4\alpha -3} \int_{T_R}t\<(1+|F|^2)^{\alpha -1} F,
\,D(\frac {\partial A}{\partial t})\> \,\phi^2\,G\,\sqrt g dz \\
    &&=4\alpha R^{4\alpha -3}\int_{T_{R}} t \<D^*\left [(1+|F|^2)^{\alpha -1} F\right ],
\,\frac {\partial A}{\partial t}\> \,\phi^2\,G\,\sqrt g\,
dz \\
    &&  - 4\alpha R^{4\alpha -3}\int_{T_{R}}t (1+|F|^2)^{\alpha -1}\<\frac {\partial A}{\partial t},\,
g^{il}F_{lm}dx^{m}\> \(\frac {\partial  G}{\partial
x^i}\phi^2+2\phi \frac {\partial \phi }{\partial x^i}G\)\,\sqrt
g\, dz
\\
&&=4\alpha R^{4\alpha -3}\int_{T_{R}} \abs{t} (1+|F|^2)^{\alpha -1}
|\frac {\partial A}{\partial t}|^2 \,\phi^2\,G\,\sqrt g\,
dz \\
    && - 2\alpha R^{4\alpha -3}\int_{T_{R}}  (1+|F|^2)^{\alpha -1} \<\frac {\partial A}{\partial t},\,
 x^ig^{il}F_{lm}dx^{m}\>    \phi^2 G\,\sqrt g\, dz
\\
    && - 4\alpha R^{4\alpha -3}\int_{T_{R}}t (1+|F|^2)^{\alpha -1} \<\frac {\partial A}{\partial t},\,
g^{il}F_{lm}dx^{m}\>  2\phi \frac {\partial \phi }{\partial
x^i} G\,\sqrt g\, dz.
\end{eqnarray*}
Using above estimates and also Young's inequality, we obtain
\begin{eqnarray*}
    &&\frac d{dR}\Phi (R;A)=I_1+I_2+I_3+I_4 \\
    && \geq \frac 12  \alpha R^{4\alpha -3}\int_{T_{R}} |t| (1+|F|^2)^{\alpha -1}  \left | 2 \frac {\partial A}{\partial
t}-\frac {x^i} {|t|} g^{il}F_{lm}dx^{m} \right
|^2  \,\phi^2\, G\,\sqrt g\,dz \\
    && -C R^{4\alpha -3}\int_{T_{R}}  (1+|F|^2)^{\alpha }
(|x|^2\phi^2+ |x| |\nabla  \phi |+\frac {|x|^4}{t} \phi^2 + |t|
|\nabla \phi |^2) \,G\,\sqrt g\,d z,
\end{eqnarray*}
where $C$ is a constant depending on the geometry of $M$. We know
 that
$$R^{-1} |x|^2G\leq C(1+G), \quad R^{-1} |t|^{-1}|x|^4G\leq
C(1+G)\quad \text {on }T_{R}\,.$$ Moreover, since $\nabla \phi =0$
for $\abs{x}<i(M)/2$,
 we see that
 $$ (|x| |\nabla \phi |+\abs{t} |\nabla \phi|^2)  G\leq  C \quad \mbox{on } T_R.$$
 Combining these estimates with Lemma \ref{lem:energyinequality}, we obtain
\begin{eqnarray*}
   \frac d{dR}\Phi (R; A)\geq -C \Phi (R; u, A)-CR\mbox
   {YM}_{\alpha}(A_0)\,.
\end{eqnarray*}
The claim for $\Phi$ follows from integrating the above inequality
in $R$.
\end{proof}

\subsection{The $\varepsilon-$regularity and convergence}
\begin{lem} \label{lem:eregularity}  There exists a positive constant
    $\varepsilon_0<i(M)$ such that for a solution $A$ to (\ref{eqn:alphaflow}), if for some $R$ with $0<R<\min
\{\varepsilon_0, \frac {t_0^{1/2}}2\}$  the inequality
$$R^{4\alpha -6}\int_{P_R(x_0, t_0)}(1+|F|^2)^{\alpha} \,dv\,dt
    \leq \varepsilon_0$$ holds,
we have
$$\sup_{P_{\frac 14 R}(x_0,t_0)} |F|^2 \leq C R^{-4}\,,$$
where the constant $C$   depends on $M$ and the bound of $\mbox
{YM}_{\alpha} (A_0)$.
\end{lem}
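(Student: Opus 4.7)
I would prove this $\varepsilon$-regularity lemma by contradiction, combining Schoen's point-picking, parabolic rescaling, and Moser iteration, with the monotonicity formula (Lemma \ref{Lemma 3.2}) providing the smallness transfer between scales.

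\textbf{Step 1 (point selection and rescaling).} Suppose the conclusion fails. Then there is a sequence of solutions $A_n$ (with $\alpha_n-1$ small), centers $(x_n,t_n)$, and radii $R_n$ satisfying the hypothesis with $\varepsilon_n\to 0$, yet $R_n^4\sup_{P_{R_n/4}(x_n,t_n)}(1+|F_n|^2)\to\infty$. Apply Schoen's trick to $G(\sigma):=(R_n/2-\sigma)^4\sup_{P_\sigma(x_n,t_n)}(1+|F_n|^2)$: let $\sigma_n$ maximize $G$, let $(y_n,\tau_n)\in\overline{P_{\sigma_n}}$ realize the supremum $e_n$, and set $\rho_n=(R_n/2-\sigma_n)/2$. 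A routine check gives
\begin{equation*}
  \sup_{P_{\rho_n}(y_n,\tau_n)}(1+|F_n|^2)\leq 16\,e_n,\qquad e_n\rho_n^4\to\infty.
\end{equation*}
Rescale by $\lambda_n=e_n^{1/4}$ via $\tilde A_n(y,s)=\lambda_n^{-1}A_n(y_n+y/\lambda_n,\tau_n+s/\lambda_n^2)$, so $|\tilde F_n|^2=\lambda_n^{-4}|F_n|^2\leq 16$ on $\tilde P_{\lambda_n\rho_n}(0,0)$ (whose parabolic radius tends to $\infty$), while $|\tilde F_n(0,0)|^2\to 1$ and the underlying metric converges to the flat metric on $\R^4$.

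\textbf{Step 2 (smallness via monotonicity).} Since $(y_n,\tau_n)\in P_{R_n/2}(x_n,t_n)$, for some $R_n'\sim R_n$ the backward time-slab $T_{R_n'}(y_n,\tau_n)$ intersected with the region where $G_{(y_n,\tau_n)}\leq C(R_n')^{-4}$ is non-negligible is essentially contained in $P_{R_n}(x_n,t_n)$, so the hypothesis yields
\begin{equation*}
  \Phi_{\alpha_n}(R_n',A_n;(y_n,\tau_n))\leq C R_n^{4\alpha_n-6}\int_{P_{R_n}(x_n,t_n)}(1+|F_n|^2)^{\alpha_n}\,dv\,dt\leq C\varepsilon_n.
\end{equation*}
Lemma \ref{Lemma 3.2} then propagates this smallness:
\begin{equation*}
  \Phi_{\alpha_n}(\rho,A_n;(y_n,\tau_n))\leq C\varepsilon_n+C\rho^2\,YM_{\alpha_n}(A_0)\qquad(\rho\leq R_n').
\end{equation*}
At $\rho=c/\lambda_n$ this tends to zero. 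Unwinding the rescaling (using $|\tilde F_n|\leq 4$ to absorb the ``$+1$'' inside $(1+|F_n|^2)^{\alpha_n}$, and a mild forward time-shift of the monotonicity center to include $(0,0)$ in the rescaled parabolic region) one obtains
\begin{equation*}
  \int_{\tilde P_c(0,0)}|\tilde F_n|^2\,d\tilde v\,d\tilde s\to 0.
\end{equation*}

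\textbf{Step 3 (Moser iteration and contradiction).} In rescaled variables, Lemma \ref{lem:firstbochner} (after dividing by $\lambda_n^6$ and using $|\tilde F_n|\leq 4$) becomes
\begin{equation*}
  \partial_s|\tilde F_n|^2-\tilde\nabla_i\bigl(\tilde a_{ij}\tilde\nabla_j|\tilde F_n|^2\bigr)\leq C|\tilde F_n|^2,
\end{equation*}
with $\tilde a_{ij}=\delta_{ij}+O(\alpha_n-1)$ uniformly elliptic. A standard parabolic Moser iteration for nonnegative subsolutions yields
\begin{equation*}
  |\tilde F_n(0,0)|^2\leq C\int_{\tilde P_c(0,0)}|\tilde F_n|^2\,d\tilde v\,d\tilde s\to 0,
\end{equation*}
contradicting $|\tilde F_n(0,0)|^2\to 1$.

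The principal obstacle is the bookkeeping of Step 2: one must convert the plain parabolic integral in the hypothesis into a bound on the heat-kernel-weighted quantity $\Phi_\alpha$ (defined on the backward time-slab $T_R$), apply monotonicity with its $YM_\alpha(A_0)$ remainder, and then reinterpret the resulting smallness at scale $c/\lambda_n$ as an integral bound on the rescaled $|\tilde F_n|^2$ over a genuine parabolic ball around $(0,0)$. The $(\alpha_n-1)$ perturbations in both the flow and the Bochner inequality are uniformly controlled since $\alpha_n-1$ is small, and the Moser iteration proceeds as in the classical case.
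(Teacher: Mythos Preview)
Your proof is correct and follows essentially the same route as the paper: Schoen's point-selection, parabolic rescaling so that the rescaled curvature is bounded, Moser's Harnack inequality applied to the Bochner subsolution, and the local monotonicity formula (with the forward time-shift of the center) to feed the smallness hypothesis into the small-scale integral. The paper carries this out as a direct argument for a single solution (working with $e_{\rho_0}=(\rho_0^4+|F_B|^2)^\alpha$ and the second Bochner formula, Lemma \ref{lem:secondbochner}) rather than via a contradicting sequence, and the monotonicity remainder there is $CR\,YM_\alpha(A_0)$ rather than $C\rho^2\,YM_\alpha(A_0)$; since $R<\varepsilon_0$ this is harmless, but you should adjust that line accordingly.
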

\begin{proof}
 Without loss of generality, assume that
$(x_0,t_0)=(0,0)$. For simplicity, we set $r_1=\frac 1 2 R$. As in
\cite {Sch}, we choose $r_0<r_1$ such that
\[
(r_1-r_0)^{4\alpha }\sup_{{ P}_{r_0}}(1+|F|^2)^{\alpha} =
   \max_{0\leq r\leq r_1}\left [ (r_1-r)^{4\alpha }\sup_{P_r} (1+|F|^2)^{\alpha}\right ],  \] and find $(x_1,t_1)\in P_{r_0}$ such that
$$e_0:=(1+|F|^2)^{\alpha} (x_1,t_1)=\sup_{{P}_{r_0}} (1+|F|^2)^{\alpha} \,.$$
 We claim that
\begin{eqnarray}\label{3.1}
e_0\leq 2^{4\alpha} (r_1-r_0)^{-4\alpha}\,.\end{eqnarray}
 Otherwise, we have
$$\rho_0=e_0^{-\frac 1{4\alpha }}\leq \frac{r_1-r_0}2\,.$$
Rescale
$$B(\tilde x)=\rho_0\,A(x_1+\rho_0\tilde x, t_1+\rho_0^2\tilde t)\,.$$
and
$$
e_{\rho_0}:= (\rho_0^4+| F_B|^2)^{\alpha} =\rho_0^{4\alpha
}\,(1+|F|^2)^{\alpha}.$$ Then we have
\begin{eqnarray*}
    && 1=e_{\rho_0}(0,0)\leq \sup_{\bar P_1}e_{\rho_0}(\tilde x,
    \tilde t)
  = \rho_0^{4\alpha}\sup_{P_{\rho_0} (x_1,t_1)} (1+|F(x,t)|^2)^{\alpha}\\
    && \leq \rho_0^{4\alpha}\(\frac{r_1 - r_0}{2}\)^{-4\alpha}\(\frac{r_1 -
    r_0}{2}\)^{4\alpha}
      \sup_{P_{\frac{r_1+r_0}2}} (1+|F(x,t)|^2)^{\alpha}\\
    &&\leq \rho_0^{4\alpha}\(\frac{r_1 - r_0}{2}\)^{-4\alpha}\(r_1 - r_0\)^{4\alpha}e_0
     = 2^{4\alpha},
\end{eqnarray*}
with $\tilde P_1 :=\{ (\tilde x,\tilde t): \quad (\tilde x,\tilde
t)\in B_1(0)\times [-1,1]\}$.
 This implies that
$$|F_B|^2\leq 16\quad \text{on } \bar P_1.$$
Combining this with Lemma \ref{lem:secondbochner}, we have
\begin{eqnarray*}
    && (\frac {\partial}{\partial \tilde  t}e_{\rho_0} -\tilde \nabla_{e_i} \left( (\delta_{ij} +2(\alpha-1)\frac{
   \left <{F_B}_{lj},{F_B}_{li}\right >}{\rho_0^4+\abs{F_B}^2}) \tilde \nabla_{e_j}
   e_{\rho_0}\right)\\&&
 =\rho_0^{2+4\alpha} \,\left [\frac {\partial}{\partial  t}(1+|F|^2)^{\alpha}
-\nabla_{e_i} \left( (\delta_{ij} +2(\alpha-1)\frac{
   \left <{F}_{lj},{F}_{li}\right >}{1+\abs{F}^2}) \nabla_{e_j}
   (1+|F|^2)^{\alpha}\right)\right ]
\\
    && \leq C e_{\rho_0},
\mbox { in } \tilde P_1\,,
\end{eqnarray*}
where the constant $C$ depends on $i(M)$ and $\sup_{x\in M}
|R_m|$. Then  Moser's parabolic Harnack inequality yields
\begin{eqnarray} \label{3.2}
    && 1=e_{\rho_0}(0,0)\leq C\int_{\tilde P_1}e_{\rho_0}\,d\tilde x\,d\tilde t= C\rho_0^{4\alpha -6}\int_{P_{\rho_0}(x_1,t_1)}  (1+|F|^2)^{\alpha}  \,dv\,dt\,.
\end{eqnarray}

Taking $\sigma =2\rho_0$ and noting that $z_1=(x_1,t_1)\in
P_{r_0}$ and $\sigma +r_0\leq \frac R2$, we apply Lemma \ref{Lemma
3.2} with $R_1=\frac {\sigma}2$, $R_2=\frac 12 R$ to obtain
\begin{eqnarray}\label{3.3}
    &&\rho_0^{4\alpha -6}\int_{P_{\rho_0}(z_1)} (1+|F|^2)^{\alpha}\,dv\,dt \\
    && \leq C\int_{T_{\sigma }(x_1,t_1+2\sigma^2)} \sigma^{4\alpha -2} (1+|F|^2)^{\alpha}\,
G_{(x_1, t_1+2\sigma
^2)}\,\phi^2\,dv\,dt\nonumber\\
    && \leq C\int_{T_{\frac  12 R}(x_1,t_1+2\sigma^2)}R^{4\alpha -2} (1+|F|^2)^{\alpha}\, G_{(x_1,t_1+2\sigma^2)}\,\phi^2\,dv\,dt\nonumber\\
    &&\quad +CR \mbox {YM}_{\alpha} (A_0) +CR^{4\alpha -6}\int_{P_{\frac R 2}(x_1,t_1+2\sigma^2)}(1+|F|^2)^{\alpha}\,dv\,dt\nonumber\\
    &&\leq CR^{4\alpha -6}\int_{P_R}  (1+|F|^2)^{\alpha}\,dv\,dt +CR\,
\mbox {YM}_{\alpha}(A_0)\leq C\varepsilon_0,\nonumber
\end{eqnarray}
where we used the fact that  for $t_1+2\sigma^2 -R^2\leq t\leq
t_1+2\sigma^2-\frac {R^2}4$ and $x\in B_R(x_0)$, there is a
constant $C$ such that
$$G_{x_1,t_1+2\sigma^2} =  \frac 1{(4\pi (t_1+2\sigma^2-t))^{2}}\text{ exp} \left (-\frac
{(x-x_1)^2}{4(t_1+2\sigma^2-t)}\right ) \leq CR^{-4}.
$$
Letting $\varepsilon_0$  be sufficiently small, (\ref{3.3})
contradicts (\ref{3.2}). Therefore, we have proved the claim
(\ref{3.1}), which implies
$$\sup_{P_{R/4}} (1+|F|^2)^{\alpha}
    \leq   (\frac {r_1} 2)^{-4\alpha }(r_1-r_0)^{4\alpha}e_0\leq 2^{4\alpha}  R  ^{-4\alpha}\,.$$
This proves Lemma \ref{lem:eregularity}.
 \end{proof}

 With the curvature bound obtained by Lemma \ref{lem:eregularity}, we may obtain higher order derivative estimates of $F$.

 \begin{lem}
     \label{lem:highorder} Suppose that $A$ is a solution of the flow equation (\ref{eqn:flowa}) on some parabolic ball $P_r(x_0,t_0)$ and that
   \begin{equation*}
     \sup_{P_r(x_0,t_0)} \abs{F}\leq C.
   \end{equation*}
   Then for each $k$, there is a constant $C_k$ such that
   \begin{equation*}
     \sup_{P_{r/2}(x_0,t_0)} \abs{\nabla^k F} \leq C(k).
   \end{equation*}
 \end{lem}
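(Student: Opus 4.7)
The plan is to treat equation (\ref{eqn:flowa}) as a quasi-linear parabolic equation for $F$ that is close to the heat equation when $\alpha-1$ is small, and to bootstrap regularity via standard parabolic $L^p$ and Schauder estimates after fixing a local Coulomb gauge to convert covariant derivative bounds into bounds on ordinary derivatives.

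First, I rewrite the evolution of $F$. Using the Bianchi identity $DF=0$ together with the Weitzenb\"ock formula recalled in the proof of Lemma \ref{lem:firstbochner}, the equation (\ref{eqn:flowF}) for $F$ becomes
\begin{equation*}
\pfrac{F}{t} + \nabla^*\nabla F = F\#F + F\circ(Ric\wedge g + 2R) + 2(\alpha-1)D\frac{*(\<\nabla F, F\>\wedge *F)}{1+|F|^2}.
\end{equation*}
Under the hypothesis $|F|\leq C$, the last term has the schematic form $(\alpha-1)[P(F)\#\nabla^2 F + Q(F,\nabla F)]$ with $P,Q$ uniformly bounded, so for $\alpha-1$ sufficiently small it is a small quasi-linear second order perturbation of the principal part $\pfrac{}{t}+\nabla^*\nabla$, and the rewritten equation remains strictly parabolic with coefficients close to those of the heat equation.

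Next, to pass from covariant to ordinary derivatives I fix a local Uhlenbeck--Coulomb gauge on a slightly smaller parabolic ball so that $d^*A=0$ and $\|A\|_{W^{1,p}}\leq C\|F\|_{L^p}$; with $|F|\leq C$ this yields an $L^\infty$ bound on $A$, and since $\nabla_A=\partial+A\#$, control of $\partial^j A$ for $j<k$ together with $\partial^k F$ gives control of $\nabla_A^k F$. The base step is the local energy estimate obtained by multiplying the Bochner-type inequality of Lemma \ref{lem:firstbochner} by a spatial cutoff $\phi^2$ and integrating in time, which yields $\int_{P_{r_1}}|\nabla F|^2\,dv\,dt\leq C$ for any $r_1<r$. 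Interpreting the rewritten equation in the Coulomb gauge as a linear parabolic system with RHS in $L^p$ (via the $L^\infty$ bound on $F$ and $L^2$ bound on $\nabla F$ combined with Sobolev embedding), parabolic $L^p$ theory gives $F\in W^{2,1}_p(P_{r_2})$ for any finite $p$.

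I then iterate. Differentiating the rewritten equation $k$ times produces, schematically,
\begin{equation*}
\pfrac{}{t}\nabla^k F + \nabla^*\nabla(\nabla^k F) = \sum_{i+j=k}\nabla^i F\#\nabla^j F + (\text{lower order terms}) + (\alpha-1)\cdot(\text{analogous quasi-linear correction}),
\end{equation*}
whose RHS is controlled once bounds on $\nabla^j F$ for $j<k$ are in hand. Parabolic $L^p$ theory and then Schauder estimates give $\nabla^k F\in C^{0,\beta}$ on $P_{r/2}$ and hence the desired pointwise bound. The main obstacle is that the $(\alpha-1)$-term is a genuine second order quasi-linear contribution, not a compact lower order perturbation; the remedy is to take $\alpha-1$ small enough that the principal symbol stays uniformly close to that of the heat equation, so the constants $C(k)$ depend on $C$, $k$, the geometry of $M$, and the smallness of $\alpha-1$, but remain bounded independently of $\alpha$ near $1$, which is exactly what is needed for the later passage $\alpha\to 1$.
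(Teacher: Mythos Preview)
Your strategy has a genuine circularity problem at the gauge-fixing step. Uhlenbeck's Coulomb gauge is an \emph{elliptic} construction: it produces, at a single time slice, a trivialization with $d^*A=0$ and $\|A\|_{W^{1,p}}\leq C\|F\|_{L^p}$. It does not give you a single trivialization over the whole parabolic cylinder $P_r$ with uniform bounds on $A$ and $\partial A$. If you fix the Coulomb gauge only at the initial time, then $A(t)$ evolves by (\ref{eqn:flowa}), and $\partial_t A$ involves $D^*F\sim \partial F+A\#F$; since you only have $\nabla F\in L^2(P)$ from the base step, you cannot integrate to keep $A$ in $L^\infty$ for later times. If instead you re-apply the Coulomb gauge at each time, the time-dependent gauge change introduces a $g^{-1}\partial_t g$ term and destroys the simple parabolic form of the $F$-equation. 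Either way, the coefficients of your ``linear parabolic system'' are not under control, and the parabolic $L^p$/Schauder machinery cannot be invoked. In fact, the paper proves a parabolic gauge-fixing result (Lemma \ref{lem:gauge}) \emph{after} Lemma \ref{lem:highorder}, precisely because it needs the bounds $|\nabla^k F|\leq C(k)$ as input; you are trying to reverse that order.

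The paper's proof sidesteps gauge issues entirely by working only with the gauge-invariant norms $|\nabla^k F|$ and using pure energy estimates. The key device you are missing is the integration-by-parts identity
\[
\int \varphi^4|\nabla F|^4\,dv = \int \varphi^4\langle\nabla F,\nabla F\rangle |\nabla F|^2\,dv \leq C\int \varphi^4|\nabla^2 F|\,|F|\,|\nabla F|^2\,dv + \text{(boundary)},
\]
which, thanks to $|F|\leq C$, absorbs $\int\varphi^4|\nabla F|^4$ into $\int\varphi^4|\nabla^2 F|^2$. This closes the $k=1$ step without any gauge choice, and an analogous inductive inequality $\int\varphi|\nabla^i F|^{2(k+1)/i}\leq C\int\varphi|\nabla^{i+1}F|^{2(k+1)/(i+1)}+C$ handles all higher $k$. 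The authors flag in the introduction that Bochner formulas for $\nabla^k F$ are unavailable here, which is why a Moser or $L^p$-bootstrap route does not go through directly; your proposal runs into the same wall, just dressed differently.
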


 \begin{proof}
  Assume that $r=1$ and write $P_r$ for $P_r(x_0,t_0)$. Recall that $F$ satisfies
  \begin{equation*}
    \pfrac{F}{t}=-DD^*F+2(\alpha-1) D\frac{*(\langle \nabla F,F\rangle\wedge *F)}{1+\abs{F}^2}.
  \end{equation*}
  By the Bianchi identity and Weizenb\"ock formula, we have
  \begin{equation}\label{eqn:forF}
    \pfrac{F}{t}=\triangle F+2(\alpha-1) D\frac{*(\langle \nabla F,F\rangle\wedge *F)}{1+\abs{F}^2} + F\# F+ \mbox{Rm}\#F,
  \end{equation}
  where $\triangle$ is the covariant Laplacian and $\mbox{Rm}$ is the Riemannian curvature of $M$.
  The proof is by induction. Let $\varphi$ be a cut-off function supported in $B_1$ with $\varphi\equiv 1$ on $B_{3/4}$. Multiplying both sides of (\ref{eqn:forF}) by $\varphi^2 F$ and integrating over $B_1$, we have
  \begin{equation*}
    \frac{1}{2}\frac{d}{dt}\int_{B_1} \varphi^2 \abs{F}^2 dv +\int_{B_1} \varphi^2 \abs{\nabla F}^2 dv \leq C (\alpha-1) \int_{B_1} \varphi^2 \abs{\nabla F}^2 dv + \mathcal L,
    \end{equation*}
  where $\mathcal L$ contains all `lower order terms'.

  In the above equation, it includes $\int_{B_1} \varphi^2 \abs{F}^3 dv$ and $\int_{B_1} \varphi^2 \abs{F}^2dv$, which are bounded, and $\int_{B_1} \abs{\nabla\varphi} \varphi \abs{\nabla F} \abs{F} dv$, which arises in the integration by parts. We shall see that
  \begin{equation}\label{eqn:lot1}
    \mathcal L \leq \eta \int_{B_1} \varphi^2\abs{\nabla F}^2 dv +C.
  \end{equation}
  In fact,
  \begin{equation*}
    \int_{B_1} \abs{\nabla \varphi}\varphi \abs{\nabla F} \abs{F} dv \leq C + \eta\int_{B_1} \varphi^2 \abs{\nabla F}^2 dv.
  \end{equation*}
  By choosing $\alpha-1$ and $\eta$ small, we conclude that
  \begin{equation*}
    \int_{P_{3/4}} \abs{\nabla F}^2 dv dt\leq C.
  \end{equation*}
  We may choose a good time slice on which the space integration of $\abs{\nabla F}^2$ is bounded. Instead of further shrinking the neighborhood, we assume $\int_{P_1} \abs{\nabla F}^2 dv dt\leq C$ and $\int_{B_1} \abs{\nabla F}^2 (\cdot,-1) dv\leq C$, which is the starting point for the next step of induction.

Applying  $\nabla$ on (\ref{eqn:forF}), multiplying by $\varphi^4 \nabla F$ and integrating over $B_1$, we have
  \begin{equation*}
    \frac{1}{2}\frac{d}{dt}\int_{B_1} \varphi^4 \abs{\nabla F}^2 dv +\int_{B_1} \varphi^4 \abs{\nabla^2 F}^2 dv \leq C (\alpha-1) \int_{B_1} \varphi^4 \abs{\nabla^2 F}^2 + \varphi^4 \abs{\nabla F}^4 dv + \mathcal L.
  \end{equation*}
  The lower order terms (still denoted by $\mathcal L$) which arise from switching the order of covariant derivatives, integration by parts and interchanging $\nabla$ and $\pfrac{}{t}$ can be controlled by $\eta\int_{B_1} \varphi^4 \abs{\nabla F}^4+ \varphi^4 \abs{\nabla^2 F}^2 dv +C$ as before. For example,
  \begin{eqnarray*}
    \int_{B_1} \abs{\nabla^2 F} \abs{\nabla F} \abs{\nabla (\varphi^4)} dv&\leq &C \int_{B_1} \abs{\varphi^{2} \nabla ^2 F} \abs{\varphi\nabla F} \abs{\nabla \varphi} dv\\
    &\leq&  \eta\int_{B_1} \varphi^4 \abs{\nabla F}^4+ \varphi^4 \abs{\nabla^2 F}^2 dv +C.
  \end{eqnarray*}
  Thanks to the boundedness of $F$, we have
  \begin{eqnarray}\label{eqn:a1}
    \int_{B_1} \varphi^4 \abs{\nabla F}^4 dv &=& \int_{B_1} \varphi^4 \langle \nabla F, \nabla F\rangle \abs{\nabla F}^2 dv \\ \nonumber
    &\leq& C\int_{B_1} \varphi^4\abs{\nabla^2 F} \abs{\nabla F}^2 dv+ C\int_{B_1} \abs{\nabla \varphi}\varphi^3 \abs{\nabla F}^3 dv\\ \nonumber
    &\leq& \frac{1}{2} \int_{B_1} \varphi^4 \abs{\nabla F}^4 +C +C\int_{B_1} \varphi^4 \abs{\nabla^2 F}^2 dv .
  \end{eqnarray}
  By taking $\alpha-1$ small, we have that $\int_{P_{3/4}} \abs{\nabla^2 F}dvdt$ is bounded, due to the boundedness of $\int_{B_1} \abs{\nabla F}^2 (\cdot,-1)dv$.

  For $k>2$, we give an indication of how the above process works. By a similar computation,
  \begin{eqnarray*}
      &&\frac{1}{2}\frac{d}{dt}\int_{B_1} \varphi \abs{\nabla^k F}^2 dv + \int_{B_1} \varphi \abs{\nabla^{k+1} F}^2 dv \\
      &\leq& C(\alpha-1) \int_{B_1} \varphi\cdot \left( \sum \prod_{i=1}^l \abs{\nabla^{a_i} F}^{b_i}\right) + \mathcal L.
  \end{eqnarray*}
  Here the summation $\sum$ is over all possible $(a_i,b_i)$ satisfying (1) $a_i=1,\cdots,k+1$, $b_i\in \mathbb N$ with $i=1,\cdots,l$ for some $l\in \mathbb N$  and (2) $\sum_{i=1}^l a_i b_i =2(k+1)$. The sum of those terms with $\sum_{i=1}^l a_i b_i<2(k+1)$ are denoted by $\mathcal L$.

  By Young's inequality, we have
  \begin{equation*}
      \int_{B_1} \varphi \sum \prod_{i=1}^l \abs{\nabla^{a_i} F}^{b_i}dv \leq C \sum_{i=1}^{k+1} \int_{B_1} \varphi \abs{\nabla^i F}^{\frac{2(k+1)}{i}} dv.
  \end{equation*}
  We now claim that for each $i=1\cdots k$, we have
  \begin{equation*}
      \int_{B_1} \varphi \abs{\nabla^i F}^{\frac{2(k+1)}{i}} dv \leq C \int_{B_1} \varphi \abs{\nabla^{i+1} F}^{\frac{2(k+1)}{i+1}} dv +C +\mathcal L.
  \end{equation*}
  The claim can be proved by induction from $i=1$, which is essentially (\ref{eqn:a1}). For $i>1$,
  \begin{eqnarray*}
      &&\int_{B_1} \varphi \abs{\nabla^i F}^{\frac{2(k+1)}{i}} dv  \\
      &\leq& C\int_{B_1} \varphi \abs{\nabla^{i-1} F} \abs{\nabla^{i+1} F} \abs{\nabla^i F}^{\frac{2(k+1)}{i}-2} dv + \mathcal L. \\
      &\leq& \eta\int_{B_1} \varphi \abs{\nabla^i F}^{\frac{2(k+1)}{i}} dv + \eta \int_{B_1} \varphi \abs{\nabla^{i-1} F}^{\frac{2(k+1)}{i-1}} dv \\
      &&+ C_\eta \int_{B_1} \varphi \abs{\nabla^{i+1} F}^{\frac{2(k+1)}{i+1}} dv +C+\mathcal L.
  \end{eqnarray*}
  By the induction assumption and choosing $\eta$ small, we see that the claim is true.
 \end{proof}

 Once we know that the $C^k$ norm of the curvature is bounded in some parabolic neighborhood, it is natural to expect a good 'gauge' in which the connection form is bounded in $C^{k+1}$. This will be the parabolic analogue of Uhlenbeck's  gauge fixing theorem. The precise statement and the proof of such a result will be interesting in its own right. For our purposes, since we have all $C^k$ bounds and the connection is a solution of a parabolic equation, we can reduce the following result to its elliptic counterpart.

 \begin{lem}\label{lem:gauge}
   Let $D(t)$ be a solution to the Yang-Mills $\alpha$-flow defined on $B\times [t_1,t_2]$. Assume that
   \begin{equation*}
     \sup_{B\times [t_1,t_2]} \abs{\nabla^k F}\leq C(k).
   \end{equation*}
   Then there is a trivialization (independent of $t$) in which $D(t)=d+A(t)$ and all derivatives (space and time) of $A(t)$ are bounded.
 \end{lem}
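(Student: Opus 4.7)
The approach is to invoke Uhlenbeck's elliptic gauge-fixing theorem at the single time $t=t_1$ and then to propagate the resulting estimates to all of $[t_1,t_2]$ using the parabolic flow equation, thereby reducing the lemma to its elliptic counterpart as suggested in the text.

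First, shrinking $B$ if necessary so that $\int_B \abs{F(\cdot, t_1)}^2\,dv$ is as small as one likes (possible since $\abs{F}$ is pointwise bounded by hypothesis), I would apply Uhlenbeck's theorem \cite{U2} to produce a trivialization $\sigma$ of $E$ over a slightly smaller ball $B'\subset B$ in which $D(t_1) = d + A(\cdot,t_1)$ satisfies the Coulomb gauge condition $d^*A(\cdot,t_1)=0$. Viewing
\begin{equation*}
  d^*A(\cdot,t_1) = 0, \qquad dA(\cdot,t_1) + A(\cdot,t_1)\wedge A(\cdot,t_1) = F(\cdot,t_1)
\end{equation*}
as an elliptic system for $A(\cdot,t_1)$ and bootstrapping with the all-order $C^0$ bounds on $\nabla^k F$ supplied by the hypothesis, one obtains $\norm{\partial^m A(\cdot,t_1)}_{C^0(B')} \le C_m$ for every $m$.

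Next I would use this \emph{same} trivialization $\sigma$ at every time $t\in[t_1,t_2]$. The connection form $A(\cdot,t):= D(t)-d$ then satisfies the $\alpha$-flow equation (\ref{eqn:alphaflow}) in $\sigma$. Since $\nabla = \partial + [A,\,\cdot\,]$ on $\mathrm{End}(E)$-valued tensors, an induction on $j$ converts the gauge-invariant bound $\abs{\nabla^j F}\le C_j$ into a bound on $\abs{\partial^j F}$ as soon as $\abs{A},\abs{\partial A},\ldots,\abs{\partial^{j-1} A}$ have already been controlled. In the same induction, applying $\partial^j$ to (\ref{eqn:alphaflow}) yields
\begin{equation*}
  \abs{\partial_t \partial^j A(x,t)} \le C_j'\bigl(1 + \abs{\partial^j A(x,t)}\bigr),
\end{equation*}
where $C_j'$ depends only on the previously established spatial bounds and on $\norm{\nabla^i F}_{C^0}$ for $i\le j+1$; integrating in $t$ and invoking Gronwall with the bounded initial datum from Step~1 gives a uniform $C^0$ bound on $\partial^j A(\cdot,t)$ over $B'\times[t_1,t_2]$, closing the induction. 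Bounds on all time derivatives $\partial_t^\ell A$ then follow by differentiating (\ref{eqn:alphaflow}) in $t$ and using the already controlled spatial derivatives of $A$ together with Lemma \ref{lem:highorder} and (\ref{eqn:flowF}).

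The main obstacle I anticipate is simply bookkeeping: one must order the induction so that the equation for $\partial_t\partial^j A$ never calls on a spatial derivative of $A$ of order higher than what has already been controlled at the previous inductive step. This is tractable precisely because the hypothesis supplies gauge-invariant $C^k$ bounds on $F$ for \emph{every} $k$, so the conversion between covariant and ordinary derivatives can proceed one derivative at a time without ever requiring an uncontrolled quantity. In this sense the lemma is a parabolic packaging of Uhlenbeck's elliptic gauge-fixing theorem.
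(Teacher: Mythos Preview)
Your proposal is correct and follows essentially the same route as the paper: fix an Uhlenbeck gauge at the single time $t_1$ to bound all spatial derivatives of $A(\cdot,t_1)$, then propagate these bounds forward in $t$ using the flow equation and the gauge-invariant $C^k$ bounds on $F$. The only cosmetic difference is that the paper observes the right-hand side of (\ref{eqn:flowa}) is entirely gauge-invariant, so $\partial_t\partial^j A$ is bounded outright (no $\partial^j A$ appears) and a plain Newton--Leibniz integration replaces your Gronwall step; your version works equally well.
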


 \begin{proof}
     For $t=t_1$ fixed, we may apply Uhlenbeck's gauge fixing to find a trivialization such that at least all $C^k$ norms of $A(t_1)$ are bounded (see Lemma 2.3.11 in \cite{Dbook}). We can now use (\ref{eqn:flowa}) to see that $\pfrac{A}{t}$ is bounded for $B\times[t_1,t_2]$. The Newton-Leibnitz formula
     \begin{equation*}
         A(t)=A(t_1)+\int_{t_1}^{t} \pfrac{A}{t} ds
     \end{equation*}
     then implies that $A(t)$ is uniformly bounded in $M\times [t_1,t_2]$. If we take derivatives of  (\ref{eqn:flowa})  both in space and time, by noticing that the right hand side involves only $F$, we know that $\frac{\partial^k A}{\partial t^k}$ are bounded on $B\times[t_1,t_2]$. By using the Newton-Leibnitz formula again, the space derivatives of $A$ are uniformly bounded on $B\times [t_1,t_2]$. Since $A$ is bounded, one can argue inductively that both covariant derivatives and the partial derivatives are bounded.
 \end{proof}

We now prove Theorem \ref{thm:two}.
\begin{proof} Let ${A_{\alpha}}$ be the smooth solution of the Yang-Mills $\alpha$-flow in $M\times [0,\infty)$ with the same initial value $A_0$ for each $\alpha >1$.
The concentration set $\Sigma$  is defined by
\begin{equation*}
    \Sigma=\bigcap_{0<R<R_M} \left\{z\in M\times [0,\infty):\quad \liminf_{\alpha\to 1} R^{4\alpha -6}\int_{P_R(z)}(1+|F_{A_{\alpha}}|^2)^{\alpha} \,dv\,dt
\geq \varepsilon_0 \right\}
\end{equation*}
for some $\varepsilon_0>0$. It is standard to show that $\Sigma$
is closed. The same argument as in \cite{HY}  also yields that for
any two positive $t_1$ and $t_2$,   $\mathcal P^2 (\Sigma\cap
(M\times [t_1,t_2]))$ is finite, where $\mathcal P^2$ denotes the
$2$-dimensional parabolic Hausdorff measure. Moreover, for any
$t\in (0,+\infty)$, $\Sigma_t=\Sigma\cap (M\times \{t\})$ consists
of  at most finitely many points.

For a point $z_0$ outside $\Sigma$, there is a constant $R>0$ such
that for sequence of $\alpha\to 1$, we have
\[ R^{4\alpha -6}\int_{P_R(z_0)}(1+|F_{A_{\alpha}}|^2)^{\alpha} \,dv\,dt
\leq \varepsilon_0. \] Then applying Lemma \ref{lem:eregularity}, we
know that
  $F_{A_{\alpha}}$ is  uniformly  bounded  in $\alpha$ inside $P_{R/2}(z_0)$.

  Lemma \ref{lem:highorder} and Lemma \ref{lem:gauge} then imply that there is a trivialization on $P_{R/2}(z_0)$ such that $A_\alpha(t)$ is bounded in any $C^k$ norm. We then choose a sequence of such neighborhoods $\{P_i\}$ covering $M\times [0,\infty)\setminus \Sigma$. Denote the transition functions by $\sigma^\alpha_{ij}$. The $C^k$ bound  of $\sigma^\alpha_{ij}$ follows from those of $A^\alpha_i$.

  By taking a subsequence, we may assume that $\sigma^\alpha_{ij}$ converges to $\sigma_{ij}$ and $A^\alpha_i$ to $A_i$ smoothly as $\alpha$ goes to $1$. The $\sigma_{ij}$'s define a bundle $E_\infty$ over $M\times [0,\infty)\setminus \Sigma$ and the $A_i$'s define a connection $D_\infty$ of $E_\infty$. Since the convergence is strong, we know from the evolution equation of $A^\alpha_i$ that $A_i(t)$ is a solution to the Yang-Mills flow.
\end{proof}

Before we conclude this section, we would like to make some remarks. Both are related to the singular set $\Sigma$.

\begin{rem}\label{rem:firstsingular}
Let $T=\inf_{(x,t)\in \Sigma} t$ be the first concentration time in Theorem \ref{thm:two}.
  We may follow from the argument of Theorem 1.3 in \cite{HY} to show that $T$ is the same as the first singular time $T'$ of the Yang-Mills flow.
\end{rem}

  As in \cite{HY}, one may ask what more we can say about the singular set $\Sigma$. For the general case, not much is known. However, we do know something for a minimizing sequence. Precisely, we have

  \begin{prop}\label{prop:line}
     Let $D_i$ be a minimizing sequence of $YM(\cdot)$ among all smooth connections of the bundle $E$. Then we choose a subsequence of $\alpha_i\to 1$ such that $YM_{\alpha_i}(D_i)< YM(D_i)+V(M)+\frac{1}{i}$, where $V(M)$ denotes the volume of $M$. Denote by $D_i(t)$ the $\alpha_i-$flow solution with initial value $D_i$. If we consider $i\to \infty$, then the concentration set $\Sigma$ as defined above satisfies
      \begin{equation*}
          \Sigma=\bigcup_{j=1}^l \set{p_j}\times (0,\infty).
      \end{equation*}
  \end{prop}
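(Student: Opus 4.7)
The plan is to show that the measures $\mu_i(t):=(1+\abs{F_{D_i(t)}}^2)^{\alpha_i}\,dv$ on $M$ have, along a subsequence, a weak-$*$ limit $\mu_\infty$ that is \emph{independent} of $t\in(0,\infty)$; the claimed structure of $\Sigma$ will then follow at once. The crucial input is a strong dissipation bound coming from the minimizing hypothesis. Since $(1+s)^{\alpha_i}\geq 1+s$ for $s\geq 0$, we have $YM_{\alpha_i}(D_i(t))\geq YM(D_i(t))+V(M)\geq \inf YM+V(M)$ because $D_i(t)$ is a connection on $E$. Combined with the energy identity of Lemma \ref{lem:energyinequality} and the hypothesis $YM_{\alpha_i}(D_i)<YM(D_i)+V(M)+1/i$, this gives
\begin{equation*}
    2\alpha_i\int_0^\infty\int_M (1+\abs{F}^2)^{\alpha_i-1}\abs{\pfrac{A_i}{t}}^2 dv\,dt \leq YM_{\alpha_i}(D_i)-(\inf YM+V(M)) \to 0.
\end{equation*}

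Next I would extract a subsequence (via Banach--Alaoglu and diagonalization) along which $\mu_i(t)\rightharpoonup\mu_\infty(t)$ weakly-$*$ for each $t$ in a dense countable subset of $(0,\infty)$, and then show $\mu_\infty(t)$ is $t$-independent. For $\phi\in C^\infty(M)$, differentiating $\int_M\phi\,d\mu_i$ in $t$, integrating by parts, and using the identity $D^*((1+\abs{F}^2)^{\alpha_i-1}F)=-(1+\abs{F}^2)^{\alpha_i-1}\pfrac{A_i}{t}$ (a direct rewriting of \eqref{eqn:alphaflow}) yield
\begin{equation*}
    \frac{d}{dt}\int_M\phi\,d\mu_i = -2\alpha_i\int_M\phi(1+\abs{F}^2)^{\alpha_i-1}\abs{\pfrac{A_i}{t}}^2 dv - 2\alpha_i\int_M \langle *(d\phi\wedge *[(1+\abs{F}^2)^{\alpha_i-1}F]),\pfrac{A_i}{t}\rangle dv.
\end{equation*}
Integrating over $[t_1,t_2]$, the first term contributes $O(\norm{\phi}_\infty/i)$ by the dissipation bound. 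For the second, the pointwise estimate $\abs{*(d\phi\wedge *\omega)}\leq \abs{d\phi}\abs{\omega}$ with $\omega=(1+\abs{F}^2)^{\alpha_i-1}F$, followed by Cauchy--Schwarz in $x$ (using $(1+\abs{F}^2)^{\alpha_i-1}\abs{F}^2\leq (1+\abs{F}^2)^{\alpha_i}$ and $\int d\mu_i\leq YM_0$) and then in $t$, yields a bound $C\norm{d\phi}_\infty YM_0^{1/2}(t_2-t_1)^{1/2}\cdot i^{-1/2}$. Letting $i\to\infty$ forces $\int\phi\,d\mu_\infty(t_1)=\int\phi\,d\mu_\infty(t_2)$ for every $\phi$, so $\mu_\infty(t)\equiv\mu_\infty$ on $(0,\infty)$.

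By Theorem \ref{thm:two}, on the complement of $\Sigma$ the sequence $D_i(t)$ converges smoothly up to gauge to a Yang-Mills flow $D_\infty(t)$; hence the absolutely continuous part of $\mu_\infty$ equals $(1+\abs{F_{D_\infty(t)}}^2)dv$. Combined with time-independence of $\mu_\infty$ and the vanishing of $\int_0^\infty\int_M \abs{\pfrac{A_i}{t}}^2$ (which forces $\pfrac{A_\infty}{t}=0$ on the smooth part), $D_\infty(t)\equiv D_\infty$ is a fixed Yang-Mills connection, and the singular part of $\mu_\infty$ is a fixed finite sum $\sum_{j=1}^l E_j\delta_{p_j}$ with each $E_j\geq\varepsilon_0$ by standard $\varepsilon$-regularity bubble quantization. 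For any $t_0>0$ and small $R$, lower semi-continuity of weak-$*$ convergence on open sets combined with Fatou in time gives
\begin{equation*}
    \liminf_{i\to\infty} R^{4\alpha_i-6}\int_{P_R(p,t_0)}(1+\abs{F}^2)^{\alpha_i}dv\,dt \geq \mu_\infty(B_R(p)),
\end{equation*}
so $(p,t_0)\in\Sigma$ iff $\mu_\infty(\{p\})\geq\varepsilon_0$, i.e.\ iff $p\in\{p_1,\ldots,p_l\}$, yielding $\Sigma=\bigcup_{j=1}^l\{p_j\}\times(0,\infty)$. The principal obstacle is the derivative estimate in the second paragraph: the weighted structure $(1+\abs{F}^2)^{\alpha_i-1}$ appearing on both sides of the $\alpha$-flow equation is precisely what allows Cauchy--Schwarz to convert $L^2_{t,x}$-smallness of the weighted time derivative into uniform weak convergence of the measures $\mu_i(t)$, after which the atomic decomposition via bubble analysis is routine.
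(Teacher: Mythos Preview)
Your dissipation bound and the localized derivative estimate for $\int_M\phi\,d\mu_i(t)$ are exactly the core of the argument and coincide with the paper's computation. The paper, however, uses this estimate directly rather than passing through weak-$*$ limits of measures: it simply shows that if $(x,t_1)\notin\Sigma$, then along a subsequence the localized $\alpha$-energy near $x$ at time $t_1$ is below $\varepsilon_0/4$, and the derivative bound (which holds for the \emph{full} sequence and is uniform in $t$) transports this smallness to any other time $t_2$, so the $\liminf$ defining $\Sigma$ at $(x,t_2)$ is also below $\varepsilon_0$. By symmetry in $t_1,t_2$ this gives $\Sigma_{t_1}=\Sigma_{t_2}$, and finiteness of each slice was already established. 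No appeal to Theorem~\ref{thm:two}, to the structure of $D_\infty$, or to an atomic decomposition of a limit measure is needed.

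Your final step contains a genuine gap. The lower bound you display for $\liminf R^{4\alpha_i-6}\int_{P_R}\ldots$ is only valid along the diagonal subsequence extracted for weak-$*$ convergence; since the full-sequence $\liminf$ can be strictly smaller than a subsequential $\liminf$, this does \emph{not} place atoms of $\mu_\infty$ into $\Sigma$. In the other direction, a lower bound is the wrong inequality: to show that a non-atomic $p$ satisfies $(p,t_0)\notin\Sigma$ one needs an \emph{upper} bound on the localized energy (upper semi-continuity of weak-$*$ limits on closed sets), which you do not supply. Thus the asserted ``iff'' is unjustified on both sides as written. The repair is precisely the observation that your derivative estimate already holds for the full sequence, which makes the localized energy asymptotically $t$-independent without any subsequence extraction; once you notice this, the measure-theoretic superstructure becomes superfluous and the paper's two-line argument drops out.
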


\begin{proof}
For any $t_4>t_3>0$, since $D_i$ is a minimizing sequence, by our suitable choice of $\alpha_i\to 1$  we have
\begin{eqnarray*}
  V(M)+YM(D_i)+\frac{1}{i}&\geq & YM_{\alpha_i}(D_i)\geq YM_{\alpha_i}(D_i(t_3))\\
  &\geq &YM_{\alpha_i}(D_i(t_4))\geq V(M)+YM(D_i),\\
\end{eqnarray*}
where we have used Lemma \ref{lem:energyinequality}.

By Lemma \ref{lem:energyinequality} again, we have
\begin{equation}\label{eqn:vanish}
  \lim_{i\to \infty} \int_{t_3}^{t_4} \int_M (1+\abs{F_{D_i(t)}}^2)^{\alpha_i-1} \abs{\frac{dD_i(t)}{dt}}^2 dvdt=0.
\end{equation}
Moreover, the convergence is uniform with respect to $t_3$ and $t_4$. For any $t_2,t_1>0$,
if $(x,t_1)\notin \Sigma$, we will show $(x,t_2)\notin \Sigma$ either. Since $(x,t_1)\notin \Sigma$, we have some $r_1>0$ such that for a subsequence (for simplicity, we still denote the subsequence by $i$),
\begin{equation*}
  \int_{B_{r_1}(x)} (1+\abs{F_{D_i}(t_1)}^2)^{\alpha_i} dv \leq \frac{\varepsilon_0}{4}.
\end{equation*}
Let $\varphi$ be some cut-off function supported in $B_{r_1}(x)$.
Then
\begin{eqnarray*}
  &&\abs{\frac{d}{dt} \int_M \varphi^2 (1+\abs{F_{D_i}}^2)^{\alpha_i} dv} \\
  &=&\abs{ \int_M \alpha_i \varphi^2 (1+\abs{F_{\alpha_i}}^2)^{\alpha_i-1} \langle F_{D_i}, \pfrac{F_{D_i}}{t}\rangle dv} \\
  &\leq& \int_M \abs{\alpha_i\varphi^2 \langle D_{i}^* \left( (1+\abs{F_{D_i}}^2)^{\alpha_i-1} F_{D_i} \right), \pfrac{D_i}{t} \rangle } \\
  && + 2\alpha_i \varphi (1+\abs{F_{D_i}}^2)^{\alpha_i-1} \abs{\nabla \varphi} \abs{ F_{D_i}} \abs{ \pfrac{D_i}{t}} dv \\
  &=& \int_M \alpha_i\varphi^2(1+\abs{F_{D_i}}^2)^{\alpha_i-1} \abs{\pfrac{D_i}{t}}^2 dv \\
  && + C \left( \int_M \alpha_i\varphi^2(1+\abs{F_{D_i}}^2)^{\alpha_i-1} \abs{\pfrac{D_i}{t}}^2 dv \right)^{1/2}\\
    &&\quad\cdot  \left( \int_M \alpha_i \abs{\nabla \varphi}^2 (1+\abs{F_{D_i}}^2)^{\alpha_i-1} \abs{F_{D_i}}^2 dv \right)^{1/2}. \\
\end{eqnarray*}
The term in the last line above is bounded by a constant depending on $r_1$ but not on $i$. Therefore, if we integrate from $t_1$ to $t_3$ and let $i\to \infty$, we have, thanks to (\ref{eqn:vanish}),
\begin{equation*}
  \lim_{i\to \infty} \int_M \varphi^2 (1+\abs{F_{D_i}}^2)^{\alpha_i}(t_3) dv<\varepsilon_0/2.
\end{equation*}
Hence, by taking every $t_3\in [t_2-r_i^2,t_2+r_i^2]$, we have (for some subsequence which we labeled by $i$)
\begin{equation*}
  \lim_{i\to \infty} r_1^{4\alpha_i-6}\int_{P_{r_i}(x,t_2)} (1+\abs{F_{D_i}}^2)^{\alpha_i} dv dt \leq \varepsilon_0.
\end{equation*}
Therefore $(x,t_2)$ is not in $\Sigma$ and the proof is done.
\end{proof}

\section{Applications}
\label{sec:application}
In this section, we study the applications of the  Yang Mills $\alpha$-flow and the  Yang Mills $\alpha$-connection produced as the limit of the flow. The outline is as follows: in Subsection \ref{ssec:regularity}, we will prove the $\varepsilon$-regularity estimate for smooth Yang Mills $\alpha$-connections. In Subsection \ref{ssec:topology}, we will recall some facts about the topology of bundles and prove Theorem \ref{thm:four}.  In Subsection \ref{ssec:minimizing}, we discuss a minimizing sequence of $YM(\cdot)$ and prove Theorem \ref{thm:three}. Finally, we show how the Yang-Mills $\alpha$-flow can be used to obtain a nonminimal Yang-Mills conncetions over $S^4$.

\subsection{An $\varepsilon-$regularity lemma}
\label{ssec:regularity}
This is the analogue of what Sacks and Uhlenbeck called `main estimate'. It is necessary for the blow-up analysis. Please note that we use the $\alpha$-flow to obtain a Yang-Mills $\alpha$-connection as the limit as $t_i\to \infty$. It follows from Theorem \ref{thm:one} that the $\alpha$-connection is smooth.

    \begin{lem}
        \label{lem:mainlemma} There is $\varepsilon_1>0$ such that if $D$ is a smooth $\alpha$-Yang-Mills connection defined on $B_1$ with $\int_{B_1} \abs{F}^2 dv\leq \varepsilon_1^2$, then in some trivialization with $D=d+A$,
        \begin{equation*}
            \norm{A}_{C^k(B_{1/2})}\leq C(k)\norm{F}_{L^2(B_1)}.
        \end{equation*}
    \end{lem}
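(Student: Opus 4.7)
The plan is to follow the standard Uhlenbeck-type $\varepsilon$-regularity argument, with the extra work of handling the $(\alpha-1)$-correction terms that appear in the $\alpha$-Yang--Mills equation (\ref{eqn:alpha}).

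First, I would apply Uhlenbeck's Coulomb gauge theorem (see e.g.\ \cite{U2} or Lemma 2.3.11 of \cite{Dbook}): provided $\varepsilon_1$ is chosen sufficiently small, the smallness condition $\norm{F}_{L^2(B_1)}\leq \varepsilon_1$ allows one to find a gauge on $B_1$ in which $D=d+A$ with $d^*A=0$, $A\cdot n|_{\partial B_1}=0$, and
\begin{equation*}
\norm{A}_{W^{1,2}(B_1)}\leq C\norm{F}_{L^2(B_1)}.
\end{equation*}
In dimension four this yields $\norm{A}_{L^4(B_1)}\leq C\norm{F}_{L^2(B_1)}$ by the Sobolev embedding.

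Next, I would recast (\ref{eqn:alpha}) as an elliptic system for $A$ in this Coulomb gauge. Expanding $D^*((1+\abs{F}^2)^{\alpha-1}F)=0$ gives
\begin{equation*}
D^*F=(\alpha-1)\frac{*(d\abs{F}^2\wedge *F)}{1+\abs{F}^2},
\end{equation*}
and combining with the Coulomb condition $d^*A=0$, the identity $F=dA+A\wedge A$, and the standard relation $D^*F=d^*F+*[A\wedge *F]$, one obtains a system of the schematic form
\begin{equation*}
\Delta A - (\alpha-1)\,Q_1(A,F)\cdot\nabla^2A \;=\; Q_2(A,\nabla A,F),
\end{equation*}
where $Q_1$ is a bounded coefficient (thanks to the factor $\abs{F}^2/(1+\abs{F}^2)\leq 1$ that controls the $\alpha$-correction), and $Q_2$ consists of terms like $A\#\nabla A$, $A\#A\#A$, $A\#F$, and $(\alpha-1)\,(A\#\nabla A)$. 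For $\alpha-1$ small enough, the principal symbol is a small perturbation of $\Delta$ and the system is strictly elliptic in the sense of Petrovskii.

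Finally, I would bootstrap by iterated $L^p$ elliptic estimates on shrinking balls. With a cutoff supported in $B_1$ equal to $1$ on $B_{1/2}$, the interior $L^p$ theory for the perturbed Laplace system yields, after absorbing the small $(\alpha-1)\nabla^2A$ term on the left,
\begin{equation*}
\norm{A}_{W^{2,p}(B_{r'})}\leq C\bigl(\norm{Q_2}_{L^p(B_r)}+\norm{A}_{L^p(B_r)}\bigr)
\end{equation*}
for $r'<r$. Starting from $A\in W^{1,2}\cap L^4$ with small norm, the quadratic and cubic terms in $Q_2$ can be estimated by H\"older's inequality and absorbed; this upgrades $A$ to $W^{2,2}$, which in dimension four embeds into $W^{1,q}$ for any $q<\infty$ and into $C^0$. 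Iterating on a sequence of nested balls between $B_{1/2}$ and $B_1$ (each step gaining one derivative) then yields $C^k$ bounds with the linear dependence on $\norm{F}_{L^2(B_1)}$ preserved at every stage, as is standard for $\varepsilon$-regularity.

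The main obstacle is the apparent loss of derivatives in the $(\alpha-1)$-correction $\frac{*(d\abs{F}^2\wedge*F)}{1+\abs{F}^2}$, which contains $\nabla F$ and hence $\nabla^2A$. This is overcome precisely by the $(\alpha-1)$ prefactor together with the boundedness of $\abs{F}^2/(1+\abs{F}^2)$: choosing $\alpha-1$ small (which we are entitled to do by Theorem \ref{thm:one}) lets the second-derivative contribution from the correction be absorbed into $\Delta A$, leaving a genuinely elliptic system on which the usual bootstrap applies.
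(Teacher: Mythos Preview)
Your approach---fix a Coulomb gauge and bootstrap the elliptic system for $A$---is different from the paper's route. The paper views the $\alpha$-connection as a stationary solution of the $\alpha$-flow and invokes the parabolic $\varepsilon$-regularity (Theorem~\ref{thm:another}), whose proof follows Lemma~\ref{lem:eregularity}: one uses the Bochner formula (\ref{eqn:firstbochner}) together with Moser's Harnack inequality to obtain a pointwise bound on $\abs{F}$ first, after which the problem is subcritical and higher-order estimates (Lemma~\ref{lem:highorder}) plus Uhlenbeck gauge fixing finish the job. The linear dependence on $\norm{F}_{L^2}$ comes from the refined mean-value estimate of Theorem~\ref{thm:better}.

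Your argument, however, has a real gap at the bootstrapping step. You claim that once $A\in W^{2,2}$ in dimension four, Sobolev embedding gives $A\in W^{1,q}$ for every $q<\infty$ and $A\in C^0$. Both statements are false: in $\Real^4$ one has only $W^{2,2}\hookrightarrow W^{1,4}$ (one application of Sobolev gives $\nabla A\in W^{1,2}\hookrightarrow L^4$, and no better), and $W^{1,4}(\Real^4)$ is the borderline case that does \emph{not} embed into $C^0$. Consequently the $L^p$ iteration you describe stalls at the critical exponent: the nonlinearities $A\#\nabla A$ and $A\#F$ land back in $L^{4/3}$ (or at best $L^2$ after one absorption) and you gain no integrability. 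This is exactly the well-known criticality of Yang--Mills in four dimensions, and it is why the paper (and Uhlenbeck's original argument in \cite{remove}) goes through the Bochner inequality for $\abs{F}^2$ to obtain $\abs{F}\in L^\infty$ \emph{before} attempting any bootstrap on $A$. Your plan can be salvaged by inserting that step: use the stationary case of (\ref{eqn:firstbochner}) and Moser iteration with the smallness $\int_{B_1}\abs{F}^2\leq\varepsilon_1^2$ to get $\sup_{B_{3/4}}\abs{F}\leq C\norm{F}_{L^2(B_1)}$; with $F\in L^\infty$ in hand, your Coulomb-gauge system becomes genuinely subcritical and the iteration you outline then goes through.
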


    Although we can prove it directly, we show a parabolic version, from which Lemma \ref{lem:mainlemma} follows obviously.
    \begin{thm}
      \label{thm:another} There is some $\varepsilon_1>0$ such that if $D(t)$ is a smooth solution to the $\alpha$-Yang-Mills flow on $P_1=B_1\times [-1,0]$ and
      \begin{equation*}
        \sup_{t\in [-1,0]} \int_{B_1} \abs{F}^2 dv \leq \varepsilon_1^2,
      \end{equation*}
      then
      \begin{equation*}
        \sup_{t\in [-1/4,0]} \sup_{B_{1/2}} \abs{\nabla^k F} \leq C(k).
      \end{equation*}
    \end{thm}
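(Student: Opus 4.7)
The plan is to first prove the pointwise bound $\sup_{P_{3/4}} |F| \leq C$, where $P_r := B_r \times [-r^2, 0]$, and then invoke Lemma \ref{lem:highorder} at each point $z \in P_{1/2}$ (with parabolic radius $1/4$ around $z$, which remains inside $P_{3/4}$) to upgrade this to $\sup_{P_{1/2}} |\nabla^k F| \leq C(k)$, which is the claimed conclusion.

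The pointwise bound is proved by Schoen's blow-up trick, essentially rerunning the proof of Lemma \ref{lem:eregularity} but with the new hypothesis. With $e := (1+|F|^2)^\alpha$ and $r_1 := 7/8$, choose $r_0 \in [0, r_1]$ maximizing $(r_1 - r)^{4\alpha} \sup_{P_r} e$ and a point $(x_1, t_1) \in \overline{P_{r_0}}$ achieving $e_0 := \sup_{P_{r_0}} e$. By the maximal choice, it suffices to prove $e_0 \leq K(r_1 - r_0)^{-4\alpha}$ for some constant $K$, since this would force $\sup_{P_{3/4}} e \leq K \cdot 8^{4\alpha}$. Suppose for contradiction that $e_0 > K(r_1 - r_0)^{-4\alpha}$ for $K$ large. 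Set $\rho_0 := e_0^{-1/(4\alpha)} < K^{-1/(4\alpha)}(r_1 - r_0)$ and rescale via $B(\tilde x, \tilde t) := \rho_0 A(x_1 + \rho_0 \tilde x, t_1 + \rho_0^2 \tilde t)$, so that $e_{\rho_0} := (\rho_0^4 + |F_B|^2)^\alpha$ satisfies $e_{\rho_0}(0, 0) = 1$ and, by the maximality of $r_0$, $\sup_{\tilde P_1} e_{\rho_0} \leq 2^{4\alpha}$; in particular $|F_B| \leq 2$ on $\tilde P_1$. The Bochner inequality of Lemma \ref{lem:secondbochner}, rescaled, becomes a uniformly parabolic differential inequality for $e_{\rho_0}$ with a bounded right hand side on $\tilde P_1$, so Moser's parabolic mean value inequality for sub-solutions yields
\begin{equation*}
1 = e_{\rho_0}(0,0) \leq C \int_{\tilde P_1} e_{\rho_0}\, d\tilde x\, d\tilde t = C \rho_0^{4\alpha - 6} \int_{P_{\rho_0}(z_1)} (1 + |F|^2)^\alpha\, dv\, dt.
\end{equation*}

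To close the argument I would bound the right hand side using the hypothesis. Since $|F| = \rho_0^{-2}|F_B| \leq 2\rho_0^{-2}$ on $P_{\rho_0}(z_1)$, we may write $|F|^{2\alpha} = |F|^{2(\alpha - 1)} |F|^2 \leq C \rho_0^{-4(\alpha - 1)} |F|^2$ there. Combined with the crude bound $\int_{P_{\rho_0}(z_1)} |F|^2\, dv\, dt \leq \rho_0^2 \sup_t \int_{B_1} |F|^2\, dv \leq \rho_0^2 \varepsilon_1^2$, this gives
\begin{equation*}
\rho_0^{4\alpha - 6} \int_{P_{\rho_0}(z_1)} (1 + |F|^2)^\alpha\, dv\, dt \leq C\rho_0^{4\alpha - 6}(\rho_0^6 + \rho_0^{6 - 4\alpha} \varepsilon_1^2) = C(\rho_0^{4\alpha} + \varepsilon_1^2) \leq C(K^{-1} + \varepsilon_1^2),
\end{equation*}
where the last step uses $\rho_0^{4\alpha} < K^{-1}(r_1 - r_0)^{4\alpha} \leq K^{-1}$. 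Taking $K$ large and $\varepsilon_1$ small (uniformly for $\alpha$ close to $1$) contradicts the inequality $1 \leq C(K^{-1} + \varepsilon_1^2)$, proving the claimed bound on $e_0$. The main technical obstacle will be justifying that the Bochner inequality from Lemma \ref{lem:secondbochner} rescales to a uniformly parabolic inequality on $\tilde P_1$, i.e.\ that the coefficient matrix $\delta_{ij} + b_{ij}$ remains uniformly elliptic and the right hand side is bounded independently of $\rho_0$, and then checking that Moser's parabolic mean value inequality applies with constants independent of $\alpha$; both rely crucially on the bound $|F_B| \leq 2$ and on $\alpha - 1$ being sufficiently small.
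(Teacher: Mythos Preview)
Your proof is correct and follows essentially the same route the paper indicates: Schoen's point-picking argument combined with a Bochner inequality and Moser iteration, with the monotonicity formula replaced by the direct estimate $\int_{P_{\rho_0}(z_1)}|F|^2\,dv\,dt\le \rho_0^2\,\varepsilon_1^2$ coming from the sup-in-time hypothesis. The one technical difference is that the paper explicitly directs you to the \emph{first} Bochner formula \eqref{eqn:firstbochner} for $|F|^2$, whereas you run the argument with $e=(1+|F|^2)^\alpha$ and Lemma~\ref{lem:secondbochner}; with the paper's choice the closing step is simply $1\le C\rho_0^{-2}\int_{P_{\rho_0}(z_1)}|F|^2\le C\varepsilon_1^2$, so your extra conversion $(1+|F|^2)^\alpha\le C(1+\rho_0^{-4(\alpha-1)}|F|^2)$ and the $K^{-1}$ term become unnecessary, but your version is equally valid.
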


    The proof is omitted because it is rather well known and follows the same method as in Lemma \ref{lem:eregularity}. It suffices to use the other Bochner formula (\ref{eqn:firstbochner}). Moreover, the same method can be used to prove a stronger result by choosing a different blow-up factor. We need the following for the blow-up analysis

\begin{thm}\label{thm:better}
    There exists $\varepsilon_1>0$ such that if $D(t)$ is a smooth solution to the $\alpha$-Yang-Mills flow satisfying
    \begin{equation*}
        \sup_{[t_0-R^2,t_0]} \int_{B_{R}(x_0)} \abs{F}^2 dv\leq \varepsilon_1^2,
    \end{equation*}
    then we have
  \begin{equation*}
    \sup_{B_{ R/2}(x_0)\times [t_0-R^2/4,t_0]} \abs{F}\leq \frac{C\varepsilon^{1/2}}{ R^2},
  \end{equation*}
  where
  \begin{equation*}
  \varepsilon:=\sup_{t\in [t_0-R^2,t_0]} \int_{B_R(x_0)} \abs{F}^2 dV.
  \end{equation*}
\end{thm}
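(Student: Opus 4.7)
My plan is to combine Theorem~\ref{thm:another}, which yields a rough, $\varepsilon$-independent pointwise bound on $|F|$, with a Moser mean-value argument applied to the scalar $u:=|F|^2$. Once $|F|$ is controlled, the Bochner formula (\ref{eqn:firstbochner}) turns $u$ into a non-negative sub-solution of a uniformly parabolic \emph{linear} equation, whose supremum can then be bounded by its space-time $L^2$ integral. This is one realization of the ``different blow-up factor'' the authors allude to: rather than rescaling so that $|F|$ becomes $1$, the Moser constant automatically supplies the factor $\varepsilon^{1/2}$ in the conclusion.

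Concretely, I would first apply Theorem~\ref{thm:another}, via the standard parabolic rescaling $\tilde x = x/R$, $\tilde t = t/R^2$ (which preserves the conformally invariant hypothesis on $\int|F|^2$ and renders the ambient Riemannian curvature $O(R^2)$), to obtain the rough estimate $\sup_{P_{R/2}(x_0,t_0)}|F|\leq C/R^2$. I would then feed this into the Bochner inequality of Lemma~\ref{lem:firstbochner}:
\begin{equation*}
  \partial_t u - \nabla_{e_i}(a_{ij}\nabla_{e_j} u) + |\nabla F|^2 \leq C\, u(1+|F|),\qquad a_{ij}=\delta_{ij}+2(\alpha-1)\frac{\langle F_{lj},F_{li}\rangle}{1+|F|^2}.
\end{equation*}
For $\alpha-1$ small the matrix $(a_{ij})$ is uniformly elliptic with $C^0$-bounded entries, and the substitution of the rough estimate renders the right-hand side bounded by $(C/R^2)\,u$; hence $u$ is a non-negative sub-solution of a uniformly parabolic linear equation whose zero-order coefficient is $O(1/R^2)$.

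Finally I would apply the classical Moser parabolic mean-value inequality for such sub-solutions on (a slightly shrunk) $P_{R/2}(x_0,t_0)$, parabolically rescaled so that the $1/R^2$ zero-order coefficient becomes an absolute constant. This yields
\begin{equation*}
  \sup_{P_{R/2}(x_0,t_0)} u \leq \frac{C}{R^{6}}\int_{P_{R}(x_0,t_0)} u\, dv\, ds \leq \frac{C}{R^{4}}\sup_{s\in[t_0-R^2,t_0]}\int_{B_R(x_0)}|F|^2\, dv \leq \frac{C\varepsilon}{R^{4}},
\end{equation*}
and taking square roots delivers $\sup_{P_{R/2}(x_0,t_0)}|F|\leq C\varepsilon^{1/2}/R^2$. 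The main technical point is checking that the constants in both the rough bound and in Moser's inequality are uniform in $\alpha\in(1,1+\delta)$ and in $R$ not exceeding the injectivity radius of $M$; this follows from the uniform ellipticity of $(a_{ij})$ for $\alpha-1$ small and from the normalization afforded by parabolic rescaling, together with standard parabolic regularity on Riemannian four-manifolds.
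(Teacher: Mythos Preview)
Your argument is correct and more direct than the paper's. The paper proceeds via a Schoen-type point-picking iteration: choose $(x_1,t_1)\in P_{1/2}$ with $f_1:=|F|(x_1,t_1)\ge\tfrac12\sup_{P_{1/2}}|F|$, then inductively locate $(x_k,t_k)$ with $|F|(x_k,t_k)=16^{k-1}f_1$ until the doubling condition $\sup_{P_{4^{-k}}(x_k,t_k)}|F|\le 16^k f_1$ holds; rescaling by $|F|(x_k,t_k)^{-1/2}$ produces $|F_{\tilde A}|(0,0)=1$ and $|F_{\tilde A}|\le 16$ on $P_{f_1^{1/2}/4}$; Theorem~\ref{thm:another} is invoked only to cap $f_1$ from above (so the time window and hence $e^{-Ct}$ stay bounded), and the mean-value inequality on that rescaled ball then gives $1\le Cf_1^{-2}\varepsilon$. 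You instead apply Theorem~\ref{thm:another} at the outset to obtain $|F|\le C/R^2$ on all of $P_{R/2}$, which immediately linearizes the Bochner inequality there and lets Moser run on the fixed region. The point-picking buys the conclusion directly on the full $P_{R/2}$ (since $f_1$ is essentially the sup over that set), whereas your route as stated yields the sup only on $P_{R/4}$---in your display the Moser step should read $\sup_{P_{R/4}}u\le CR^{-6}\int_{P_{R/2}}u$, since the linear subsolution property is only verified where $|F|$ is already bounded---and recovering $P_{R/2}$ requires a routine covering argument. The ingredients are otherwise identical.
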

\begin{proof}
  By scaling and translation, we may assume that $R=1$, $x_0=0$ and $t_0=0$.
  Set
    \begin{equation*}
        P_r(x,t)=\{(x',t')|x'\in B_r(x) \quad \mbox{and } t-r^2\leq t'\leq t\}.
    \end{equation*}

    It is $\sup_{P_{1/2}} \abs{F}$ that we want to estimate. Find $(x_1,t_1)$ in $P_{1/2}$ such that
    \begin{equation*}
      \abs{F}(x_1,t_1)\geq \frac{1}{2}\sup_{P_{1/2}} \abs{F}.
    \end{equation*}
    It now suffices to bound $f_1:=\abs{F}(x_1,t_1)$. If we are lucky, then we have
    \begin{equation}\label{eqn:lucky}
      \sup_{P_{1/4}(x_1,t_1)} \abs{F} \leq 16 f_1.
    \end{equation}
    If not, we can find $(x_2,t_2)$ in $P_{1/4}(x_1,t_1)$ such that
    \begin{equation*}
      \abs{F}(x_2,t_2)=16 f_1.
    \end{equation*}
    By induction, we claim that after finitely many times, we have $k\in \mathbb N$, such that
    \begin{equation*}
      \abs{F}(x_k,t_k)=16^{k-1}f_1
    \end{equation*}
    and
    \begin{equation*}
      \sup_{P_{1/4^{k}}(x_k,t_k)} \abs{F}\leq 16 \abs{F}(x_k,t_k)=16^k f_1.
    \end{equation*}
    In fact, if we write $d_P$ for parabolic distance, then we have
    \begin{equation*}
      d_P( (x_k,t_k), (x_{k-1},t_{k-1}))\leq \frac{1}{4^{k-1}}.
    \end{equation*}
    Since $(x_1,t_1)$ is in $P_{1/2}$, we know $(x_k,t_k)\in P_{5/6}$ for all $k$. However, $F$ is smooth in $P_1$ and hence $\sup_{P_{5/6}}\abs{F}$ is bounded.

    We do a scaling and translation on $P_{1/4^k}(x_k,t_k)$ to get $\tilde{A}$ such that
    \begin{equation}\label{eqn:four}
      \sup_{P_{\frac{1}{4}f_1^{1/2}}} \abs{F_{\tilde{A}}}\leq 16 \quad \mbox{and } \abs{F_{\tilde{A}}}(0,0)=1
    \end{equation}
    and
    \begin{equation*}
      \sup_{[-f_1/16,0]} \int_{B_{\frac{1}{4}f_1^{1/2}}} \abs{F_{\tilde{A}}}^2 dV\leq \varepsilon.
    \end{equation*}

Applying (\ref{eqn:four}) to Theorem \ref{thm:another} and noticing Lemma \ref{lem:firstbochner}, we have
    \begin{equation*}
      \pfrac{}{t}\abs{F_{\tilde{A}}}^2 \leq \triangle \abs{F_{\tilde{A}}}^2 +C \abs{F_{\tilde{A}}}^2.
    \end{equation*}

    Consider $g(x,t)=e^{-Ct}\abs{F_{\tilde{A}}}^2$ which is a subsolution to the heat equation. By Theorem \ref{thm:another}, we know $f_1$ is bounded by a constant. Hence
    \begin{equation*}
      \int_{-f_1/16}^0 \int_{B_{\frac{1}{4}f_1^{1/2}}} g(x,t) dxdt \leq C \int_{-f_1/16}^0 \int_{B_{\frac{1}{4}f_1^{1/2}}} \abs{F_{\tilde{A}}}^2(x,t) dxdt.
    \end{equation*}
    By Mean Value inequality for linear heat equation,
    \begin{equation*}
      1=g(0,0)\leq C f_1^{-2} \varepsilon,
    \end{equation*}
    which finishes the proof of this lemma.
\end{proof}

\subsection{Connected sum of vector bundles}
\label{ssec:topology}
We recall some topological facts about vector bundles (principal bundles). Let $G$ be a connected compact Lie group. There is a topological space $BG$, which is called the classifying space of $G$, and a $G$-bundle $EG$ with $BG$ as its base, which is called the universal bundle, such that for any $G-$bundle $E$ over $M$, there is a map $f:M\to BG$ such that $E$ is just the pull back bundle $f^*(EG)$. Moreover, the isomorphism classes of $G$-bundles are in one to one correspondence with the homotopy classes of maps from $M$ to $BG$. Therefore, the classification of bundles is equivalent to the classification of continuous maps from $M$ to $BG$.

The topology of $BG$ is closely related to that of $G$. Since $EG$ is contractible, the exact sequence of homotopy groups implies that
\begin{equation*}
    \pi_{i+1} (BG)=\pi_i(G).
\end{equation*}
Moreover, it is known that for all connected Lie groups $G$, $\pi_1(G)$ is a finitely generated abelian group, $\pi_2(G)=0$ and $\pi_3(G)$ is a finitely generated free abelian group. An invariant of the classifying map $f$(hence of $E$) related to $\pi_1(G)$ is called an $\eta$ invariant. It was defined via \v{C}ech cohomology in \cite{Se}. In particular, if $\pi_1(G)=0$ or $M=S^4$, then $\eta$ is always trivial. There is another invariant called the vector Pontryagin number related to $\pi_3(G)$. For our purposes, we shall restrict ourselves to the case $M=S^4$ below. Hence, it is nothing but an element in $\pi_4(BG)=\pi_3(G)=\mathbb Z^l$.

To define the connected sum of bundles, let us consider two bundles $E_i$ over $M_i$ for $i=1,2$. Pick any $p_i\in M_i$ and let $B_i$ be a small ball around $p_i$ such that  $E_i|_{B_i}$ are trivial bundles. We obtain two manifolds with boundary $M_i\setminus B_i$ and two bundles $E_i|_{M_i\setminus B_i}$. We identify $\partial B_i$ with orientation taken into account to obtain the connected sum $M=M_1\# M_2$. Such an identification is uniquely determined topologically. We still need an identification of $E_i|_{\partial B_i}$. Although they are trivial bundles over $S^3$, there are many different bundle isomorphisms between them. Among those isomorphisms, there is a natural one. $E_i|_{\partial B_i}$ admits a trivialization inherited from the trivialization of $E_i|_{B_i}$. By identifying the two trivializations, we obtain the natural isomorphism and  a bundle $E$ over $M$, which is called the connected sum of $E_1$ and $E_2$. Since we will always consider connected manifolds $M_i$, the definition is independent of the choice of $p_i$ and the size of (small) $B_i$. We remark that $M\# S^4=M$ for any closed $4-$manifold $M$.

It is well known that when we consider the convergence of a sequence of Yang-Mills connections on bundle $E$ with bounded energy, blow-up occurs. In fact, the same discussion works for $\alpha$-Yang-Mills connections, or any other sequence of connections as long as we have the $\varepsilon$-regularity and a total energy bound. This results in a weak limit on some different bundle $E'$ and finitely many bubble connections on $E_i$ over $S^4$ for $i=1\cdots l$. The point is that $E=E'\# E_1\# \cdots \# E_l$. This follows from the removable singularity theorem of Uhlenbeck and some analysis on the neck region, which we briefly recall as follows.

Assume for simplicity that there is only one bubble. That is $A_i$, after gauge transformations, converges on $M\setminus B_\delta$ to the weak limit $A'$, and after scaling, $A_i|_{B_{\lambda_i R}}$  converges on $B_R$ to the bubble connection $\tilde{A}$. Since $\delta$ and $R$ can be arbitrary, $A'$ is defined on $M\setminus \set{p}$ and $\tilde{A}$ is defined on $\Real^4$. The removable singularity theorem claims that in fact $A'$ and $\tilde{A}$ are smooth connections of $E'$ over $M$ and $\tilde{E}$ over $S^4$. Topologically, there are different ways to extend a bundle over $M\setminus \set{p}$ to  $M$. This amounts to the choice of a trivialization of $E|_{\partial B_\delta}$ (up to topological equivalence). There is one naturally dictated by the converging sequence $A_i$. By the $\varepsilon-$regularity, if we restrict $A_i$ to $B_{\delta}\setminus B_{\delta/2}$ and scale to $B_2\setminus B_1$, it is a connection with arbitrarily small curvature (in any norm). This decides a trivialization (see Lemma 2.4 in \cite{remove}). Similar analysis works for the bubble connection on $B_{2\lambda_i R}\setminus B_{\lambda_i R}$.

To see that $E$ is the connected sum of $E'$ and $\tilde{E}$, it suffices to show that the trivializations of $E$ on $B_{\delta}\setminus B_{\delta/2}$ and $B_{2\lambda_i R}\setminus B_{\lambda_i R}$ agree with each other. This is related to how the bubble tree is constructed. If one follows the process of Ding and Tian \cite{DT}, we know that the energy of the $A_i$ restricted to $B_t\setminus B_{t/2}$ are smaller than any given $\varepsilon_1$ for $t\in [2\lambda_i R, \delta]$. For each $t$, the smallness of energy and $\varepsilon-$regularity implies a choice of trivialization. As $t$ changes from $2\lambda_R$ to $\delta$, we see that the two trivializations can be continuously deformed to each other. If one follows the construction of Parker \cite{Parker}, we have the total energy over the neck region $B_\delta\setminus B_{2\lambda_i R}$ is small, say smaller than $\varepsilon_1$. Using the trivialization over $B_{2\lambda_i R}\setminus B_R$, we may extend the connection to $B_R$ with a controlled amount of the energy. We can do the same  at the infinity to obtain a smooth connection over $S^4$ whose energy is smaller than a multiple of $\varepsilon_1$. Hence, the bundle must be trivial and it implies that the two trivializations agree with each other.

We now prove Theorem \ref{thm:four}.
\begin{proof}[Proof of Theorem \ref{thm:four}]
  Recall that $G$-bundles over $S^4$ correspond to the homotopy classes of maps from $S^4$ to the classifying space $BG$ of $G$, and that $\pi_4(BG)=\pi_3(G)$. Assume the theorem is not true. Then there are at most $r-1$ $G$-bundles which admit  Yang-Mills $G-$connections. Let $a_1,\cdots,a_{r-1}$ be elements in $\pi_4(BG)$ corresponding to these $G$-bundles. By our assumption, there is $a\in \pi_4(BG)$  which is not generated by $\{a_1,\cdots,a_{r-1}\}$.

  Let $E$ be the bundle corresponding to $a$. Pick any smooth connection on $E$. Consider the $\alpha$-flow starting from it. Theorem \ref{thm:one} gives a  Yang-Mills $\alpha$-connection $A_\alpha$ for each $\alpha>1$. Since $E$ is not a trivial bundle and $S^4$ is simply connected, $A_\alpha$ cannot be flat. Take the limit as $\alpha$ to $1$.

  If the convergence is strong, then we find a Yang-Mills $G$-connection, which contradicts the  choice of $a$. If not, the bundle $E$ splits into a connected sum of $E_1$,\ldots,$E_l$ over $S^4$, and each admits a Yang-Mills $G$-connection, which also contradicts the choice of $a$.
\end{proof}

\subsection{Minimizing sequences of $YM(\cdot)$}
\label{ssec:minimizing}

In this subsection, we prove Theorem \ref{thm:three}. For a closed $4-$manifold $M$ and the $G$-bundle $E$, let $m(E)$ be the infimum of $YM(A)$ for all $G-$connections $A$ of $E$.

First, let us show a general result which has nothing to do with the blow-up.
\begin{prop}\label{prop:always}
 If $E=E'\# E_1\#\cdots \#E_l$, where $E'$ is a bundle over $M$ and $E_i$ are bundles over $S^4$, then
\begin{equation*}
  m(E)\leq m(E')+\sum_{i=1}^l m(E_i).
\end{equation*}
\end{prop}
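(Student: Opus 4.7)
The plan is to prove the inequality by a cut-and-paste construction that exploits the conformal invariance of $YM$ in dimension four. Given $\varepsilon>0$, first I would choose smooth connections $A'$ on $E'$ and $A_i$ on each $E_i$ with $YM(A')\le m(E')+\varepsilon$ and $YM(A_i)\le m(E_i)+\varepsilon$. Fix disjoint points $p_1,\dots,p_l\in M$ at which the bubbles will be attached and, for each $i$, fix a north pole $N_i\in S^4$ used to perform the connected sum on the $S^4$ side. Stereographic projection from $N_i$ turns $A_i$ into a connection $\tilde A_i$ on $\R^4$ with the same total $YM$ energy; rescaling by $\tilde A_i^\lambda(x)=\lambda\,\tilde A_i(\lambda x)$ with $\lambda$ small concentrates the curvature into a ball of radius $\sim\lambda$ while preserving the energy. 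After translating this ball to sit near $p_i\in M$ and identifying via the natural bundle isomorphism dictated by the connected-sum construction recalled in Subsection \ref{ssec:topology}, the geometric setup to be glued is in place.

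The decisive step is to interpolate $A'$ and the rescaled bubbles on the neck annuli without creating much extra curvature. On a small geodesic ball $B_\delta(p_i)\subset M$, by smoothness of $A'$ and Uhlenbeck gauge fixing, one chooses a trivialization of $E'|_{B_\delta(p_i)}$ in which $A'=d+a'$ with
\begin{equation*}
\int_{B_\delta(p_i)}|a'|^4+|\nabla a'|^2\,dv\to 0\quad\text{as }\delta\to 0.
\end{equation*}
Similarly, on the region $\{|x|>\delta/\lambda\}\subset\R^4$, the rescaled bubble $\tilde A_i^\lambda$ has arbitrarily small $L^2$-curvature when $\lambda/\delta\to 0$, and a second application of Uhlenbeck's theorem gives a gauge on that annulus in which $\tilde A_i^\lambda=d+\tilde a_i$ with $\tilde a_i$ small in $L^\infty\cap W^{1,2}$. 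With a smooth cutoff $\chi_i$ supported on the neck annulus around $p_i$, define the glued connection $\hat A$ on $E=E'\#E_1\#\cdots\#E_l$ by
\begin{equation*}
\hat A=d+\chi_i\,\tilde a_i+(1-\chi_i)\,a'
\end{equation*}
on each annulus, and by the original connections off the annuli.

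It remains to estimate $YM(\hat A)$. Off the necks, $\hat A$ contributes exactly $YM(A')+\sum_i YM(\tilde A_i^\lambda)\le m(E')+\sum_i m(E_i)+(l+1)\varepsilon$. On each neck the curvature decomposes as
\begin{equation*}
F_{\hat A}=\chi_i F_{\tilde a_i}+(1-\chi_i)F_{a'}+d\chi_i\wedge(\tilde a_i-a')+\chi_i(1-\chi_i)\bigl([\tilde a_i\wedge\tilde a_i]-[a'\wedge a']\bigr)+\cdots,
\end{equation*}
and each term has $L^2$-norm tending to zero as $\delta\to 0$ and $\lambda/\delta\to 0$, because $a'$ and $\tilde a_i$ are small in the chosen gauges while $|d\chi_i|$ is bounded by the reciprocal of the (scale-invariant) annulus width. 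Therefore $m(E)\le YM(\hat A)\le m(E')+\sum_i m(E_i)+(l+1)\varepsilon+o(1)$, and letting $\varepsilon\to 0$ gives the claim. The main obstacle I anticipate is matching the two trivializations on the neck: the Uhlenbeck gauge coming from the $M$-side and the one pulled back via stereographic projection from the $S^4$-side must differ by a gauge transformation that is small enough to keep the cross term $d\chi_i\wedge(\tilde a_i-a')$ under control. This is exactly where the natural identification used to define the connected sum $E'\#E_i$ in Subsection \ref{ssec:topology} enters, and the quantitative smallness follows from the near-uniqueness of Uhlenbeck gauges on annuli of small energy.
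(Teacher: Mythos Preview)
Your approach is correct in outline, but the paper's proof is considerably simpler and avoids precisely the obstacle you flag at the end. Rather than invoking Uhlenbeck gauge fixing and then interpolating between two nonzero connection forms on the neck, the paper exploits smoothness directly: given a smooth connection $D_i$ and any trivialization over a small ball $B_\delta$, rescaling $B_\delta$ to $B_2$ makes the connection form $C^k$-small (not merely $W^{1,2}$-small), so multiplying by a cutoff makes $D_i$ \emph{exactly} equal to $d$ on $B_{\delta/2}$ at an energy cost less than $\varepsilon$. Doing this on both the $M$-side and the $S^4$-side (after stereographic projection and rescaling) produces modified connections that are literally flat on the gluing region. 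The connected sum is then performed using these very trivializations, so the glued connection is simply $d$ on the overlap, there is no interpolation term at all, and the neck contributes zero energy.

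The advantage of the paper's route is that it dissolves your ``main obstacle'': since both sides are $d$ in the trivializations that define the connected sum, there is no gauge to match and no cross term $d\chi_i\wedge(\tilde a_i-a')$ to estimate. Your route via Uhlenbeck gauges would also succeed, but it requires the near-uniqueness argument you sketch (controlling the transition between the two Coulomb gauges on the annulus), which is extra work. The paper's argument trades that analytic subtlety for the elementary observation that a smooth tensor, restricted to a shrinking ball and rescaled, has vanishing $C^k$ norm.
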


\begin{proof}
  For simplicity, consider $l=1$. If suffices to show that for any $\varepsilon>0$ and any two connections $D_1$ and $D_2$ of $E'$ and $E_1$ respectively, we may construct a connection $D$ of $E$ such that
  \begin{equation*}
    YM(D)\leq YM(D_1)+YM(D_2)+\varepsilon.
  \end{equation*}
  (This is exactly Lemma 5.7 in \cite{Isobe}).  For completeness, we also give a proof here.

  Given any smooth connection $D_i$ and a trivialization of the bundle over some ball $B$, by multiplying  by a cut-off function, we may assume that $D_i$ is flat in a smaller ball at the expense of any small change of the energy. More precisely, for any $\varepsilon>0$, there is a $\delta>0$ and we have another connection $D_i'$ such that

   (1) $D_i=D_i'$ outside $B_\delta$;

    (2) $D_i'=d$ on $B_{\delta/2}$;

    (3) $\abs{YM(D'_i)-YM(D_i)}< \varepsilon$.

    Indeed, if $D_i=d+A_i$ on $B$, due to the smoothness of $A_i$, there exists $\delta>0$ such that if we scale $B_\delta$ to $B_2$, $D_i$ becomes $d+\tilde{A}_i$ with $\norm{\tilde{A}_i}_{C^k}$ as small as we need.

  Let $\varphi$ be a cut-off function: $\varphi\equiv 1$ on $B_2\setminus B_{3/2}$ and $\varphi\equiv 0$ in $B_{1}$. Consider a new connection $d+(\varphi \tilde{A}_i)$. It agrees with $d+\tilde{A}_i$ outside $B_{3/2}$ and is $d$ in $B_{1}$. We scale $d+(\varphi \tilde{A}_i)$ back to $B_\sigma$ and denote the new connection by $D_i'$. It remains to see that the change in the energy is small. Due to the scaling invariance of energy, it suffices to check that any $C^k$ norm of $F=d(\varphi \tilde{A}_i)+ [\varphi \tilde{A}_i,\varphi \tilde{A}_i]$ is small on $B_2$.

  Fix $p\in M$ and $q\in S^4$. By the above construction, we may assume that in $B_\delta(p)$ and $B_{\delta}(q)$, there is a trivialization such that the connection is just $d$. Via the stereographic projection, $D_1$ is a connection over $\Real^4$, which outside $B_R$ is nothing but $d$ in some trivialization. We further scale it down to assume that $R=\delta/2$. We can now obtain a new connection by gluing $D'$ on $M\setminus B_{\delta/2}$ and $D_1$ on $B_R$. Since there is no energy at all in the overlap domain, the lemma is proved.
\end{proof}

We then consider a minimizing sequence. For a given bundle $E$, let $D_i$ be a minimizing sequence with
\begin{equation*}
  \lim_{i\to \infty} YM(D_i)=m(E).
\end{equation*}
Since $D_i$ is smooth, we can find $\alpha_i$ close to $1$ such that
\begin{equation*}
  YM_{\alpha_i}(D_i)\leq YM(D_i)+V(M)+\frac{1}{i}.
\end{equation*}
Let $D_i(t)$ be the solution of the $\alpha_i$-Yang-Mills flow from $D_i$ and set $D_i'=D_i(1)$. Then,
\begin{equation*}
  YM(D_i')+V(M)\leq YM_{\alpha_i}(D_i')\leq YM(D_i)+V(M)+\frac{1}{i}.
\end{equation*}
This implies that $D_i'$ is another minimizing sequence.

In order to do the blow-up analysis for $D_i'$, we need the following $\varepsilon-$regularity result,
\begin{lem}
    There exists $\varepsilon>0$ such that if $B_r(x)\subset M$ satisfies
    \begin{equation*}
        \lim_{i\to \infty} \int_{B_r(x)} \abs{F_{D_i'}}^2 dv \leq \varepsilon,
    \end{equation*}
    then
    \begin{equation*}
        \norm{\nabla_{D_i'}^k F_{D_i'}}_{C^0(B_{r/2}(x))} \leq C r^{-k-2}.
    \end{equation*}
\end{lem}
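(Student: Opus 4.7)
The strategy is to leverage Proposition \ref{prop:line}, which asserts that for the modified minimizing sequence $D_i'=D_i(1)$, the concentration set $\Sigma$ of Theorem \ref{thm:two} has the vertical-line structure $\Sigma=\bigcup_{j=1}^l\{p_j\}\times(0,\infty)$. Combined with the smooth convergence away from $\Sigma$ provided by Theorem \ref{thm:two}, this reduces the lemma to showing that $(x,1)\notin\Sigma$; once that is in hand, the higher-derivative bounds are automatic from the smooth convergence.

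To verify $(x,1)\notin\Sigma$, I would first convert the slice hypothesis $\int_{B_r(x)}|F_{D_i'}|^2\,dv\leq\varepsilon$ into a slice bound on the $\alpha_i$-energy density,
$$\int_{B_r(x)}(1+|F_{D_i'}|^2)^{\alpha_i}\,dv \leq V(B_r)+C\varepsilon+o_i(1),$$
using the elementary pointwise inequality $(1+s)^{\alpha_i}\leq 1+C(1+s)^{\alpha_i-1}s$ together with H\"older's inequality and the uniform $L^{2\alpha_i}$ bound on $F$ from Lemma \ref{lem:byUhlenbeck}. For $\varepsilon$ and $r$ chosen small enough this is below $\varepsilon_0/4$, where $\varepsilon_0$ is the universal threshold in the definition of $\Sigma$. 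This slice smallness is precisely the hypothesis that drives the propagation argument inside the proof of Proposition \ref{prop:line}, which relies on the crucial minimizing-sequence fact that $\int_{t_3}^{t_4}\int_M(1+|F|^2)^{\alpha_i-1}|\partial_s A|^2\,dv\,ds\to 0$ uniformly in $t_3,t_4$ (coming from Lemma \ref{lem:energyinequality} and the near-minimizing choice of $\alpha_i$). That argument transports the slice smallness to any nearby slice $t^*\neq 1$, and then integrating in time yields the parabolic smallness
$$R^{4\alpha_i-6}\int_{P_R(x,t^*)}(1+|F|^2)^{\alpha_i}\,dv\,dt<\varepsilon_0$$
needed to conclude $(x,t^*)\notin\Sigma$. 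The vertical-line structure of $\Sigma$ then gives $(x,1)\notin\Sigma$.

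With $(x,1)\notin\Sigma$, Theorem \ref{thm:two} furnishes a neighborhood $U$ of $(x,1)$ in $M\times[0,\infty)\setminus\Sigma$ and gauge transformations $\phi_i$ such that $\phi_i^*D_i$ converges smoothly on $U$ to a limiting Yang-Mills flow $D_\infty$. Restricting to the slice $t=1$ and using the gauge invariance of $|\nabla_{D_i'}^k F_{D_i'}|$, one obtains uniform $C^k$ bounds on the curvature of $D_i'$ on a neighborhood of $x$. The scaling $C(k)r^{-k-2}$ in the conclusion is recovered by the standard rescaling $y=(z-x)/r$, which normalizes $B_r(x)$ to $B_1$ and, for $\alpha_i-1$ small, leaves the $\alpha_i$-flow approximately scale invariant; the curvature then scales as $r^{-2}$ and its $k$-th covariant derivative as $r^{-k-2}$.

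The heart of the argument is the transfer of smallness across the slice $t=1$. The local energy inequality Lemma \ref{lem:localenergy} alone propagates smallness only forward in time, so a direct application of the parabolic $\varepsilon$-regularity of Theorem \ref{thm:better} fails because its hypothesis window extends strictly below $t_0$. The dissipation identity Lemma \ref{lem:energyinequality} specialized to the minimizing sequence provides the global-in-time $L^2$ smallness of $\partial_t A$ that lets Proposition \ref{prop:line} move slice bounds in either time direction, and it is this minimizing-sequence input (not available for a general $\alpha$-flow) that makes the lemma possible.
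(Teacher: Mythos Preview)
Your identification of the key mechanism---that the vanishing dissipation $\int_{t_3}^{t_4}\int_M(1+|F|^2)^{\alpha_i-1}|\partial_t A|^2\,dv\,dt\to 0$ for the minimizing sequence is what allows slice smallness at $t=1$ to be propagated backward to the window $[1-\sigma r^2,1]$---is correct, and your use of the computation inside the proof of Proposition \ref{prop:line} to carry this out is sound. This yields parabolic smallness on a window of scale $r$, which is the heart of the matter and is exactly what the paper obtains (the paper cites only Lemma \ref{lem:localenergy} at this step; your remark that forward propagation alone is not enough is well taken).

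The gap is in your final step. Once you have $\sup_{t\in[1-\sigma r^2,1]}\int_{B_r(x)}|F_{D_i(t)}|^2\leq C\varepsilon$, the paper applies Theorem \ref{thm:another} directly (rescaling the window of size $\sqrt\sigma\,r$ to unit size), and this immediately gives the scale-invariant bound $|\nabla_{D_i'}^k F_{D_i'}|\leq C(k)r^{-k-2}$ on $B_{r/2}$ at $t=1$. Your route through the concentration set $\Sigma$, the vertical-line statement of Proposition \ref{prop:line}, and Theorem \ref{thm:two} instead yields only \emph{qualitative} smooth convergence on a neighborhood whose size is governed by $\mathrm{dist}(x,\{p_j\})$, not by $r$; the constant you can read off from Theorem \ref{thm:two} is not $Cr^{-k-2}$. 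The closing rescaling argument does not repair this, since $\Sigma$ and Theorem \ref{thm:two} are global objects attached to the fixed manifold and sequence and do not transform under a local dilation $y=(z-x)/r$. The fix is immediate: drop the detour and feed the parabolic-window smallness you have already secured directly into Theorem \ref{thm:another} (or, equivalently, Lemma \ref{lem:eregularity} followed by Lemma \ref{lem:highorder}).
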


\begin{proof}
    The proof relies on Theorem \ref{thm:another} and $\varepsilon$ will be determined by $\varepsilon_1$ and the energy bound for our minimizing sequence.

By our choice of $\alpha_i$, we have
\begin{equation*}
    \lim_{i\to \infty} \int_M \left(1+\abs{F_{D_i'}}^2\right)^{\alpha_i} -\left(1+\abs{F_{D_i'}}^2\right) dv =0.
\end{equation*}
Hence, for $i$ sufficiently large,
\begin{equation*}
    \lim_{i\to \infty} \int_{B_{r}(x)} (1+\abs{F_{D_i'}}^2)^{\alpha_i}-1\, dv \leq 2 \varepsilon.
\end{equation*}
The local energy inequality (Lemma \ref{lem:localenergy}) implies that there exists $\sigma>0$ depending on the total energy and $\varepsilon$ such that for $i$ sufficiently large,
\begin{equation*}
    \sup_{t\in [1-\sigma r^2,1]} \int_{B_{r}(x)}(1+\abs{F_{D_i'}}^2)^{\alpha_i}-1\, dv \leq 3 \varepsilon.
\end{equation*}
Therefore,
\begin{equation*}
    \sup_{t\in [1-\sigma r^2,1]} \int_{B_{r}(x)}\abs{F_{D_i'}}^2 \, dv \leq 4 \varepsilon.
\end{equation*}
Set $\varepsilon=\frac{1}{4}\varepsilon_1$ and the proof follows from Lemma \ref{thm:another}.
\end{proof}

Now we can do the well-known blow-up analysis for $D_i'$. If there are nontrivial bubbles and $E=E'\# E_1\#\cdots \# E_l$, then
\begin{equation*}
  m(E)=\lim_{i\to\infty} YM(D_i')\geq m(E')+\sum_{i=1}^l m(E_i).
\end{equation*}
This together with Proposition \ref{prop:always} will imply the energy identity:
\begin{prop}
    Let $D_i$ be a minimizing sequence of the Yang-Mills functional among all smooth connections of the bundle $E$ over $M$. Then, there exist bundles $E'$ over $M$ and $E_1,\cdots, E_l$ over $S^4$ for some $l\geq 0$ and Yang-Mills connections $D_{\infty}'$ and $\tilde{D}_1,\cdots, \tilde{D}_l$ such that
  \begin{equation*}
      \lim_{i\to\infty} YM(D_i)=YM(D'_{\infty})+\sum_{i=1}^l YM(\tilde{D}_i).
  \end{equation*}
\end{prop}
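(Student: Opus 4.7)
The plan is to pass from the given minimizing sequence $D_i$ to the modified sequence $D_i'=D_i(1)$ obtained by running the $\alpha_i$-flow for time $1$, and then carry out a standard bubble-tree analysis for $D_i'$ using the $\varepsilon$-regularity lemma just established. The point of the modification is that $D_i'$ now satisfies pointwise higher-order curvature estimates away from a finite bubbling set, which the original minimizing $D_i$ may lack; and by the computation preceding the statement, $D_i'$ is still a minimizing sequence, i.e.\ $\lim_i YM(D_i')=\lim_i YM(D_i)=m(E)$.

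First I would define the concentration set
\[
    S=\bigcap_{r>0}\Bigl\{x\in M:\ \liminf_{i\to\infty}\int_{B_r(x)}\abs{F_{D_i'}}^2\,dv\geq\varepsilon\Bigr\},
\]
with $\varepsilon$ from the $\varepsilon$-regularity lemma. By the total energy bound $S$ is finite. Off $S$ the $\varepsilon$-regularity lemma gives uniform $C^k$ bounds on $F_{D_i'}$, and by Uhlenbeck's gauge fixing plus a diagonal argument (as in Subsection~\ref{ssec:topology}) one extracts, after gauge transformations, a smooth limit connection $D_\infty'$ on a bundle $E'$ over $M\setminus S$; the removable singularity theorem of Uhlenbeck extends $E'$ and $D_\infty'$ smoothly over $S$. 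Since each $D_i'$ satisfies the Yang-Mills $\alpha_i$-equation and since $\alpha_i\to 1$ while $F_{D_i'}$ is locally uniformly bounded, $D_\infty'$ is a smooth Yang-Mills connection on $E'$.

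Next comes the bubble analysis at each $p_j\in S$. Using the standard rescaling procedure (either \`a la Ding--Tian or Parker, as sketched in Subsection~\ref{ssec:topology}), one extracts finitely many bubble connections $\tilde D_1,\dots,\tilde D_l$ on bundles $E_1,\dots,E_l$ over $S^4$; each $\tilde D_j$ is Yang-Mills by the same limiting argument on the rescaled equation. The connected-sum discussion in Subsection~\ref{ssec:topology} then identifies the underlying bundle: $E\cong E'\# E_1\#\cdots\# E_l$. The one delicate input is the no-neck-energy statement
\[
    \lim_{i\to\infty}YM(D_i')\geq YM(D_\infty')+\sum_{j=1}^l YM(\tilde D_j),
\]
which is the heart of the argument and which I view as the main obstacle: one needs to show that the curvature energy on the annular neck regions $B_\delta(p_j)\setminus B_{R\lambda_i}(p_j)$ vanishes as $i\to\infty$, then $\delta\to 0$, then $R\to\infty$. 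This is precisely where Theorem~\ref{thm:better} (the sharp $\varepsilon$-regularity with $\varepsilon^{1/2}$ decay) is indispensable, since it converts the smallness of $L^2$-energy on a neck into a pointwise decay of $\abs{F}$ that beats the conformal scaling and forces the neck energy to zero.

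Finally I close the argument by combining the two inequalities. Since $\tilde D_j$ is a connection on $E_j$ we have $YM(\tilde D_j)\geq m(E_j)$ and similarly $YM(D_\infty')\geq m(E')$, so
\[
    m(E)=\lim_{i\to\infty}YM(D_i')\geq YM(D_\infty')+\sum_{j=1}^l YM(\tilde D_j)\geq m(E')+\sum_{j=1}^l m(E_j).
\]
On the other hand, Proposition~\ref{prop:always} applied to $E=E'\# E_1\#\cdots\# E_l$ gives the reverse inequality $m(E)\leq m(E')+\sum_j m(E_j)$. Hence every inequality is an equality; in particular $YM(D_\infty')=m(E')$, $YM(\tilde D_j)=m(E_j)$, and
\[
    \lim_{i\to\infty}YM(D_i)=\lim_{i\to\infty}YM(D_i')=YM(D_\infty')+\sum_{j=1}^l YM(\tilde D_j),
\]
which is exactly the claimed energy identity.
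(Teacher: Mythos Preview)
Your approach is essentially identical to the paper's: pass to the modified sequence $D_i'=D_i(1)$, run the blow-up analysis using the $\varepsilon$-regularity lemma to get $D_\infty'$ and the bubbles, invoke the connected-sum identification $E\cong E'\# E_1\#\cdots\# E_l$ from Subsection~\ref{ssec:topology}, and then close via the sandwich with Proposition~\ref{prop:always}.

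One point of confusion worth correcting: the inequality $\lim_i YM(D_i')\geq YM(D_\infty')+\sum_j YM(\tilde D_j)$ is \emph{not} the no-neck-energy statement and does not require Theorem~\ref{thm:better}; it is the easy direction, following from Fatou/lower semicontinuity on the body and each bubble region separately. The vanishing of neck energy is a \emph{consequence} of your sandwich argument (all inequalities become equalities), not an input to it. Theorem~\ref{thm:better} is used in the blow-up analysis to extract the bubbles and, via the discussion in Subsection~\ref{ssec:topology}, to justify the connected-sum decomposition---which is the genuinely nontrivial topological input needed before Proposition~\ref{prop:always} can be applied.
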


Next, it remains to study the relation between the limit connection $D_{\infty}'$ and the weak limit $D_{\infty}$ of Sedlacek \cite{Se}.

We try to prove that the two limit (two Yang-Mills connection on two smooth bundles) are globally the same up to gauge transformations. This is the best one could hope for.

Let $S$ be the union of energy concentration sets, both for $D_i$ in the Sedlacek limit and for $D_i'$ above. Let $\set{U^\beta}$ be an open cover of $M\setminus S$. We shall consider three bundles.

(1) The original one where the minimizing sequences and their $\alpha$-flow lies on is denoted by $E$.

(2) The weak limit bundle, $E_1$, where the weak limit of $D_i$ lies. In the paper of Sedlacek, it is given by transition functions. However, it is convenient to think  of it as an abstract bundle, with a set of trivialization.

(3) The strong limit bundle, $E_2$, where the weak limit $D_i'$ lies.

The convergence of the minimizing sequence $D_i$ on $E$ in \cite{Se} can be reformulated as follows. For each $D_i$, there is a trivialization $e_i^\beta$ in which $D_i=d+A_i^\beta$, where $\norm{A_i^\beta}_{W^{1,2}}$ is bounded. $g_i^{\beta\gamma}$ will denote the transition functions. There is a trivialization $e^\beta$ of $E_1$ when restricted to $M\setminus S$, in which the weak limit $D_\infty=d+A_\infty^\beta$. We denote the transition functions by $g^{\beta\gamma}$. We know
\begin{equation*}
    \norm{A_i^\beta-A_\infty^\beta}_{W^{1,2}}\to 0.
\end{equation*}
\begin{rem}
    This convergence was shown to be weakly $W^{1,2}$ in \cite{Se} and was shown to be strong by Isobe in \cite{Isobe}.
\end{rem}
There is a bundle map $\varphi_i^\beta:E|_{U^\beta}\to (E_1)|_{U^\beta}$ by identifying trivialization $e_i^\beta$ and $e^\beta$. The above convergence can be written as
\begin{equation}\label{eqn:leftcon}
    \norm{(\varphi_i^\beta)^* D_\infty-D_i}_{W^{1,2}(U^\beta)}\to 0.
\end{equation}
In \cite{Se}, $\varphi_i^\beta$ and $\varphi_i^\gamma$ cannot be fitted together to get a larger bundle map. However, we have the following relation between them.

Let $v$ be any vector of $E|_{U^\beta\cap U^\gamma}$. Suppose that
\begin{equation*}
    v=\tilde{v} e^\beta_i = g^{\beta\gamma}_i \tilde{v} e^\gamma_i.
\end{equation*}
By definition,
\begin{equation}\label{eqn:good}
    \varphi^\beta_i(v)= \tilde{v} e^\beta = g^{\beta\gamma} \tilde{v} e^\gamma = \varphi^\gamma_i (g^{\beta \gamma} g^{\gamma\beta}_i v e^\beta_i)= g^{\beta\gamma}g^{\gamma\beta}_i \varphi^\gamma_i(v).
\end{equation}
The relation (\ref{eqn:good}) will be important for us later.

Next, we describe the strong convergence of $D_i'$ to $D'_\infty$. We know there is a sequence of bundle maps $\sigma_i$ from $E|_{M\setminus S}$ to $E_2|_{M\setminus S}$ such that
\begin{equation*}
    \norm{\sigma_i^* D_\infty'- D_i'}_{C^{k}(K)}\to 0
\end{equation*}
for any compact $K$ in $M\setminus S$.
For any $\beta$, we have
\begin{equation}
    \norm{\sigma_i^* D_\infty'- D_i'}_{C^{k}(U^\beta)}\to 0.
    \label{eqn:rightcon}
\end{equation}
By our construction, we know
\begin{equation*}
    \norm{D_i-D'_i}_{L^2}\to 0.
\end{equation*}
Hence,
\begin{equation*}
    \norm{(\varphi^\beta_i)^* D_\infty- \sigma_i^* D'_\infty}_{L^2(U^\beta)}\to 0.
\end{equation*}
That is
\begin{equation}\label{eqn:local}
    \norm{D_\infty- (\eta^\beta_i)^* D'_\infty}_{L^2(U^\beta)}\to 0,
\end{equation}
where $\eta_i^\beta= \sigma_i \circ (\varphi^\beta_i)^{-1}$ is a bundle map from $E_1|_{U^\beta}$ to $E_2|_{U^\beta}$.

We claim that $\eta^\beta_i$ converges to $\eta^\beta$ in weak $W^{1,2}$ topology and $D_\infty= (\eta^\beta)^* D'_\infty$ on $U^\beta$. To see this, consider the meaning of (\ref{eqn:local}) in trivialization $e^\beta$ and $f^\beta$. (Here $f^\beta$ is a trivialization of $E_2$ on $U^\beta$.) Since $D_\infty=d+A_\infty$ and $D'_\infty=d+A'_\infty$, we have
\begin{equation*}
    \norm{A_\infty-(s^{-1}ds + s^{-1}A'_\infty s)}_{L^2(U^\beta)}\leq C.
\end{equation*}
Here the $s$ is the map $\eta^\beta_i$ in a trivialization and it is bounded in $W^{1,2}$. Hence  our claim follows. Moreover, although the convergence is only weakly $W^{1,2}$, $\eta^\beta$ is smooth since it maps smooth connections to smooth connections.

We next claim that $\eta^\beta$ and $\eta^\gamma$ agree  over $U^\beta\cap U^\gamma$. Hence, this gives a global bundle map $\eta$ from $E_1|_{M\setminus S}$ to $E_2|_{M\setminus S}$. To see this, it suffices to check that
\begin{equation*}
    \lim_{i\to \infty} \sigma_i\circ (\varphi_i^\beta)^{-1}=\lim_{i\to \infty} \sigma_i\circ (\varphi_i^\gamma)^{-1}.
\end{equation*}
Due to the smoothness of $\eta^\beta$ and $\eta^\gamma$, it suffices to check the above for a dense set of $x\in U^\beta\cap U^\gamma$. Thanks to (\ref{eqn:good}) and the $W^{1,2}$ weak convergence of $g^{\beta\gamma}_i$ to $g^{\beta\gamma}$, we have a dense set $W$ such that for $x\in W$ and any $v\in (E_1)_x$, we have
\begin{equation*}
    (\varphi_i^\beta)^{-1}(v)-(\varphi_i^\gamma)^{-1}(v)\to 0.
\end{equation*}
Because $\sigma_i$ is a linear map and $\sigma_i$ lies in $G\subset SO(r)$ ($r$ is the rank of $E$), we have
\begin{equation*}
    \lim_{i\to \infty} \sigma_i \circ (\varphi_i^\beta)^{-1} (v)- \sigma_i \circ (\varphi^\gamma_i)^{-1}(v)=0.
\end{equation*}

Now we have a bundle map $\eta$ defined  on $M\setminus S$ satisfying $\eta^* D'_\infty=D_\infty$. Finally, since $D_\infty$ and $D'_\infty$ are smooth connections, $\eta$ extends automatically to  a global smooth gauge transformation with $\eta^* D'_\infty=D_\infty$. In fact, locally on $B\setminus \set{0}$,
\begin{equation*}
    A_\infty= \eta^{-1} d\eta +\eta^{-1} A'_\infty \eta,
\end{equation*}
which implies $\eta$ and all its derivatives are bounded on $B\setminus \set{0}$ since $A_\infty$ and $A'_\infty$ are smooth over $B$.

Hence, we finish the proof of Theorem \ref{thm:three}.

\subsection{Another approach for Min-Max of the Yang-Mills functional}
\label{ssec:ssu}

It is well known that the Yang-Mills functional in dimension 4 does not satisfy the Palais-Smale condition, which caused great difficulty in applying   Morse theory to show the existence of a nonminimal critical point. In 1989, Sibner, Sibner and Uhlenbeck \cite{SSU} proved the existence of nonminimal Yang-Mills connections on the trivial $SU(2)$ bundle over $S^4$. They used the fundamental relationship between $m-$equivariant gauge fields on $S^4$ and monopoles on hyperbolic $3-$space $\mathbb H^3$ as presented by Atiyah \cite{A}. If we identify $S^4$ with $\Real^4\cup \set{\infty}$ by stereographic projection, we may introduce the following coordinates
\begin{equation*}
    (z,\theta,(x,y))\mapsto (z\cos \theta,z\sin \theta,x,y)\in \Real^4.
\end{equation*}
Hence, one can define a $U(1)$ action on $S^4$ by
\begin{equation*}
    q(\theta')(z,\theta,(x,y))=(z,\theta+\theta' (\mbox{mod} 2\pi),(x,y))
\end{equation*}
and leaving other points in $S^4$ not represented by this coordinate system fixed.

Let $\set{\hat{i},\hat{j},\hat{k}}$ be a standard basis for $\mathfrak {su}(2)$ and $s(\theta)=e^{\hat{i}m\theta}(m\geq 2)$ be a homeomorphism from $U(1)$ to $SU(2)$. A connection $D$ is called {\it an $m$-equivariant connection} if
\begin{equation*}
    q(\theta)^* D= s(\theta)^{-1}\circ D\circ s(\theta)
\end{equation*}
for all $\theta\in U(1)$. Denote the set of all $m$-equivariant connections of the trivial $SU(2)$ bundle over $S^4$ by $\mathcal M$.

The authors of \cite{SSU} followed a construction of Taubes \cite{loop} to find a non-contractible loop of connections $D^\gamma (\gamma\in S^1)$ of $m$-equivariant connections in $\mathcal M$, satisfying
\begin{equation}
    YM(D^\gamma)<8\pi m.
    \label{eqn:smallym}
\end{equation}
The connections in Lemma 2 of \cite{SSU} are in $W^{1,\infty}$, but by approximation, we can assume that they are smooth and (\ref{eqn:smallym}) remains true. Since they are smooth, we know
\begin{equation*}
    YM_\alpha(D^\gamma)<8\pi m+\omega_4
\end{equation*}
for sufficiently small $\alpha$. Here $\omega_4$ is the volume of $S^4$.

We can now apply  the Yang-Mills  $\alpha$-flow to the loop. The $\alpha$-flow preserves symmetry, so that the flow stays in $\mathcal M$. By Theorem \ref{thm:stable}, we obtain a deformation of the circle in $\mathcal M$. We then claim that we obtain a nontrivial  Yang-Mills  $\alpha$-connection $D_\alpha$ with $YM_\alpha(D_\alpha)<8\pi m+\omega_4$. Otherwise, the flow will converge  to the flat connection for any $\gamma\in S^1$, which will result in a contraction of the loop to a single point in $\mathcal M$. This is not possible.

The energy of these  Yang-Mills  $\alpha$-connections $D_\alpha$ has a uniform lower bound. This is a generalized gap theorem similar to the result of Bourguignon and Lawson \cite{BL}.

\begin{lem}
    \label{lem:gap}
    There is $\kappa>0$ depending only on $G$ such that any nontrivial  Yang-Mills  $\alpha$-connection $D_\alpha$ on $S^4$ satisfies
    \begin{equation*}
        YM(D_\alpha)>\kappa.
    \end{equation*}
\end{lem}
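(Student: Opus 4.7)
The plan is to integrate the refined Bochner inequality (\ref{eqn:betterbochner}) from Lemma \ref{lem:firstbochner}, applied to the stationary case. A Yang-Mills $\alpha$-connection $D_\alpha$ is a critical point of $YM_\alpha$, hence a time-independent solution of the $\alpha$-flow, so that $\partial_t|F|^2 = 0$. Plugging this into (\ref{eqn:betterbochner}) gives the pointwise estimate
\begin{equation*}
 - \nabla_{e_i}\left( (\delta_{ij}+b_{ij}) \nabla_{e_j} |F|^2 \right) + |\nabla F|^2 \leq C|F|^3 - \langle F, F\circ(\mbox{Ric}\wedge g+2R)\rangle
\end{equation*}
on $S^4$, valid for all $\alpha-1$ sufficiently small. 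The constant $C$ here encodes the quadratic form $F\#F$, and so it depends only on $G$.

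Next I would integrate over $S^4$. The divergence term disappears, and the key point is the positivity of the Weitzenbock curvature endomorphism on 2-forms over the round $S^4$: since $S^4$ is positive Einstein,
\begin{equation*}
  \langle F, F\circ(\mbox{Ric}\wedge g+2R)\rangle \geq c_0 |F|^2
\end{equation*}
for a universal positive constant $c_0$ (this is exactly the Weitzenbock positivity that Bourguignon and Lawson used in \cite{BL}). Substituting yields
\begin{equation*}
  \int_{S^4} |\nabla F|^2\,dv + c_0\int_{S^4} |F|^2\,dv \leq C\int_{S^4} |F|^3\,dv.
\end{equation*}

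To absorb the cubic term I would use H\"older together with the Sobolev embedding $W^{1,2}(S^4)\hookrightarrow L^4(S^4)$ applied to the scalar function $|F|$ via Kato's inequality $|\nabla |F||\leq |\nabla F|$:
\begin{equation*}
  \int_{S^4}|F|^3\,dv\leq \|F\|_{L^2}\|F\|_{L^4}^2\leq C_S\|F\|_{L^2}\left( \int_{S^4}|\nabla F|^2\,dv+\int_{S^4}|F|^2\,dv\right).
\end{equation*}
Combining the two displays gives
\begin{equation*}
  \int_{S^4}|\nabla F|^2\,dv+c_0\int_{S^4}|F|^2\,dv \leq C C_S \|F\|_{L^2}\left( \int_{S^4}|\nabla F|^2\,dv+\int_{S^4}|F|^2\,dv\right).
\end{equation*}
If $\|F\|_{L^2}^2 = YM(D_\alpha)$ is smaller than some $\kappa(G)>0$ chosen so that $CC_S\|F\|_{L^2}<\min(1,c_0)/2$, both integrals on the left must vanish, forcing $F\equiv 0$. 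Contraposing, any nontrivial $D_\alpha$ satisfies $YM(D_\alpha)\geq \kappa$, with $\kappa$ depending only on $c_0$, $C_S$ (universal on $S^4$) and on $G$ through the constant $C$ in the Bochner formula.

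The main obstacle I anticipate is ensuring the $(\alpha-1)$ terms do not spoil the argument. In Lemma \ref{lem:firstbochner} the $(\alpha-1)$ contributions appear in the divergence (through $b_{ij}$) and in the absorbed gradient term; the former disappears after integration, while the latter has already been absorbed on the left in (\ref{eqn:betterbochner}) provided $\alpha-1$ is sufficiently small. Thus the threshold $\kappa$ is uniform for all $\alpha$ in the range for which our Bochner formula is valid, which is exactly the range in which our $\alpha$-connections were produced.
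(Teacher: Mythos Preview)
Your argument is correct and follows the same Bourguignon--Lawson strategy as the paper: exploit the refined Bochner inequality (\ref{eqn:betterbochner}) in the stationary case, use the positivity of $Ric\wedge g+2R$ on the round $S^4$, and close via the Sobolev inequality.

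The execution differs slightly. The paper drops the $\abs{\nabla F}^2$ term, multiplies the resulting inequality by $\abs{F}^2$, and integrates by parts; this produces $\int (\delta_{ij}+b_{ij})\nabla_i\abs{F}^2\nabla_j\abs{F}^2$ on the left, and one then uses that $\delta_{ij}+b_{ij}$ is uniformly elliptic for small $\alpha-1$ before applying Sobolev to $\abs{F}^2$ to reach $\left(\int\abs{F}^8\right)^{1/2}\leq C\|F\|_{L^2}\left(\int\abs{F}^8\right)^{1/2}$. You instead integrate the Bochner inequality directly, so the full divergence term vanishes without ever unpacking $b_{ij}$, keep the $\int\abs{\nabla F}^2$ term, and then use Kato plus Sobolev on $\abs{F}$. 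Your route is marginally cleaner with respect to the $(\alpha-1)$ perturbation, since the $b_{ij}$ never has to be handled; the paper's route needs the ellipticity of $\delta_{ij}+b_{ij}$ but avoids invoking Kato. Both reach the same threshold conclusion.
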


\begin{proof}
    Recall that we have proved a stronger Bochner formula (\ref{eqn:betterbochner}) than stated in Lemma \ref{lem:firstbochner}. For our purpose here, $\partial_t \abs{F}^2$ vanishes and the $Ric\wedge g+2R$ is just the $4$ times of the identify map on $2-$forms. Hence,
  \begin{equation*}
      -\nabla_{e_i} \left( (\delta_{ij}+b_{ij})\nabla_{e_j}\abs{F}^2 \right) \leq C \abs{F}^3-3\abs{F}^2,
  \end{equation*}
when $\alpha-1$ is small.
Multiplying both sides by $\abs{F}^2$ and integrating over $S^4$, we have
\begin{equation*}
    \int_{S^4} \abs{\nabla \abs{F}^2}^2 +\abs{F}^4 \leq C \int_{S^4} \abs{F}^5.
\end{equation*}
By the Sobolev inequality and the H\"older inequality, we obtain
\begin{equation*}
    \left( \int_{S^4} \abs{F}^8 \right)^{1/2}\leq C \left( \int_{S^4} \abs{F}^2 \right)^{1/2} \left( \int_{S^4} \abs{F}^8 \right)^{1/2}.
\end{equation*}
This implies that $F$ is identically zero if the energy is small.
\end{proof}

Now, we may pass to the limit $\alpha\to 1$. Note that $\kappa<YM(D_\alpha)<8\pi m$. The rest of the proof goes just like Theorem 1 in \cite{SSU}. If the convergence of $D_\alpha$ is strong, we obtain  a nonminimal Yang-Mills connection on the trivial $SU(2)$ bundle over $S^4$. If not, the energy bound $8\pi m$ implies that either the weak limit  or one of the bubbles is a nontrivial Yang-Mills connection on the trivial $SU(2)$ bundle (hence nonminimal), because the energy is not enough for two nontrivial bundles.

\begin{acknowledgement}
 {The research  of the
first author was supported by the Australian Research Council
grant.  A part of the work was done when
Tian and Yin visited the University of Queensland in 2012.}
\end{acknowledgement}

\end{document}